\setlist[enumerate,1]{label=(\alph*), ref=(\alph*)}
\newtheorem{Lemma}{Lemma}[section]
\newtheorem{Corollary}[Lemma]{Corollary}
\newtheorem{Theorem}[Lemma]{Theorem}
\newtheorem*{Theorem*}{Theorem}
\newtheorem*{Corollary*}{Corollary}
\newtheorem*{Conjecture*}{Conjecture}
\theoremstyle{definition}
\newtheorem{Definition}[Lemma]{Definition}
\theoremstyle{remark}
\newtheorem{Remark}[Lemma]{Remark}
\DeclareMathOperator{\lin}{span}
\DeclareMathOperator{\divergence}{div}
\DeclareMathOperator{\without}{\sim}
\DeclareMathOperator{\Clos}{Clos}  
\DeclareMathOperator{\Tan}{Tan}     
\DeclareMathOperator{\spt}{spt}     
\DeclareMathOperator{\im}{im}       
\DeclareMathOperator{\grad}{grad}  
\DeclareMathOperator{\dmn}{dmn}     
\DeclareMathOperator{\Nor}{Nor}
\DeclareMathOperator{\Hom}{Hom}     
\DeclareMathOperator{\pt}{pt}       
\DeclareMathOperator{\ap}{ap}  
\DeclareMathOperator{\distance}{distance}
\DeclareMathOperator{\reach}{reach}
\DeclareMathOperator{\trace}{tr}
\DeclareMathOperator{\Int}{Int}
\DeclareMathOperator{\sing}{sing}
\newcommand{\restrict}{ \mathop{ \rule[1pt]{.5pt}{6pt} \rule[1pt]{4pt}{0.5pt} }\nolimits }
\newcommand{\Der}{\ensuremath{\mathrm{D}}}
\newcommand{\ud}{\ensuremath{\,\mathrm{d}}}
\newcommand{\dist}[2]{\distance\left(#1,#2\right)}
\newcommand{\Real}[1]{ \mathbf{R}^{#1}}
\newcommand{\Haus}[1]{ \mathscr{H}^{#1} }
\newcommand{\Leb}[1]{ \mathscr{L}^{#1} }
\newcommand{\rect}[1]{(\mathscr{H}^{#1},#1)}
\newcommand{\Hdensity}[3]{\bm{\Theta}^{#1}(\mathscr{H}^{#1}\restrict \,#2,#3 )}
\newcommand{\nat}{\mathbb{N}}
\newcommand{\grass}[2]{\mathbf{G}(#1,#2)}
\newcommand{\Var}[1]{\mathbf{V}_{#1}}
\newcommand{\RVar}[1]{\mathbf{RV}_{#1}}
\newcommand{\var}[1]{\mathbf{v}_{#1}}
\newcommand{\tbwedge}{{\textstyle \bigwedge}}
\newcommand{\cnt}[1]{\mathscr{C}^{#1}}
\newcommand{\dspace}[2]{\mathscr{D}(#1,#2)}
\newcommand{\VF}{\mathscr{X}}
\newcommand{\project}[1]{{#1}_\natural}
\newcommand{\oball}[2]{\mathbf{U}(#1,#2)}
\newcommand{\oballF}[3]{\mathbf{U}^{#1}(#2,#3)}
\newcommand{\cball}[2]{\mathbf{B}(#1,#2)}
\newcommand{\cballF}[3]{\mathbf{B}^{#1}(#2,#3)}
\newcommand{\sphere}[1]{\mathbf{S}^{#1}}
\newcommand{\id}[1]{\mathrm{id}_{#1}}
\newcommand{\CF}[1]{\bm{1}_{#1}}
\newcommand{\orthproj}[2]{\mathbf{O}^\ast({#1},{#2})}
\newcommand{\density}{\boldsymbol{\Theta}}
\title{Uniqueness of critical points of the anisotropic isoperimetric problem for finite perimeter sets}
\author{Antonio De Rosa \and S{\l}awomir Kolasi{\'n}ski \and Mario Santilli}
\begin{document}

\maketitle
\begin{abstract}
Given an elliptic integrand of class $ \mathscr{C}^{3} $, we prove that finite unions of disjoint open Wulff shapes with equal radii are the only volume-constrained critical points of the anisotropic surface energy among all sets with finite perimeter and reduced boundary almost equal to its closure.
\end{abstract}

\section{Introduction}

The classical anisotropic isoperimetric problem (or Wulff problem) consists in minimizing the anisotropic boundary energy among all sets of finite perimeter with prescribed volume. For all positive (continuous) integrands the solution is uniquely characterized, up to translation, by the Wulff shape, as proved by Taylor in \cite{MR0388225}. Alternative proofs can be found in \cite{fonsecamuellerwulff, Gromov,
  brothersmorgan}. This isoperimetric shape was constructed by Wulff  in \cite{Wulff}
  and plays a central role in crystallography. 

Instead of considering minima, a more subtle question is to characterize \emph{critical points} of the anisotropic isoperimetric problem. For integrands of class $ \mathscr{C}^{1} $, this is equivalent to characterize sets of finite perimeter whose anisotropic mean curvature in the sense of varifolds is constant. For all convex integrands in $ \Real{2} $, Morgan proved in \cite{MR2113931} that  Wulff shapes are the only critical points among all planar regions with boundary given by a closed and connected rectifiable curve. To the best of our knowledge, the characterization in every dimension for smooth boundaries has been conjectured for the first time by Giga in \cite{Gigacon} and Morgan in \cite{MR2113931}. This has been positively answered for smooth elliptic integrands in \cite{Giga} for dimension $3$, and in \cite{helimage} for every dimension. These works are the anisotropic counterpart of the celebrated Alexandrov's result \cite{MR0102114}.  Moreover, quantitative stability versions of this rigidity theorem have been showed in \cite{DMMN,DG,DG2}.

In the non-smooth setting, Maggi has conjectured in \cite[Conjecture]{OR} the characterization of the Wulff shapes among \emph{sets of finite perimeter}:
\begin{Conjecture*}[\cite{OR}]
 F-Wulff shapes are the unique sets of finite perimeter and finite volume that are critical points of $ \mathcal F$ at fixed volume.
\end{Conjecture*}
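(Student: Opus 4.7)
The strategy is to reduce the statement to the smooth anisotropic Alexandrov theorem of \cite{helimage} by upgrading the regularity of the reduced boundary. Given a critical point $E$ satisfying $\Haus{n-1}(\Clos\partial^{*}E \setminus \partial^{*}E)=0$, I first read $\partial^{*}E$ as an integral $(n-1)$-varifold $V$ and encode criticality at fixed volume as the statement that the anisotropic first variation $\delta_F V$ is represented by a constant $H_0$ along the generalised outer unit normal, with $H_0$ the Lagrange multiplier for the volume constraint. In varifold language, $V$ has constant anisotropic mean curvature in the sense of distributions.

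The technical heart would be an $\varepsilon$-regularity theorem for elliptic anisotropic integral varifolds with bounded (here constant) anisotropic first variation, in the spirit of the recent rectifiability and regularity results based on the atomic condition in the anisotropic varifold literature. The structural assumption $\Haus{n-1}(\Clos\partial^{*}E\setminus\partial^{*}E)=0$ is exactly what rules out ``ghost'' pieces of $\spt V$ and allows one to promote measure-theoretic smallness to topological control, so that, locally, $\spt V$ is a $C^{1,\alpha}$ hypersurface. Once such regularity is in hand, constancy of the anisotropic mean curvature becomes a quasilinear elliptic equation for any graph parametrisation of $\partial^{*}E$, and Schauder theory together with $F\in\mathscr{C}^{3}$ bootstraps this to a $C^{2,\alpha}$ hypersurface on which classical anisotropic differential geometry is available.

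With smoothness established, the smooth anisotropic Alexandrov theorem of \cite{helimage} applies to each connected component of $\partial E$ and forces it to bound a Wulff shape. Since the anisotropic mean curvature of a Wulff shape scales inverse-linearly with its radius, the common value $H_0$ forces all radii to coincide; the finite perimeter hypothesis then permits only finitely many disjoint such shapes, giving precisely the conclusion sought.

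The dominant obstacle, by a wide margin, is the $\varepsilon$-regularity step. In the anisotropic setting there is no monotonicity formula, Allard's theorem does not extend verbatim, and the standard blow-up and tilt-excess compactness arguments must be replaced by more delicate anisotropic analogues. Converting the hypothesis on the reduced boundary into the density-style input required by any $\varepsilon$-regularity theorem demands care as well, since that hypothesis is topological rather than directly measure-theoretic and must be shown to imply, for example, uniform lower density bounds for $V$ along the support. I would expect the resolution to be the most intricate part of the paper and to occupy the bulk of its technical work, while the subsequent rigidity argument from smoothness to Wulff shape is essentially a bookkeeping of \cite{helimage}.
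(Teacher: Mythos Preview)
Your strategy has a genuine gap at the step where you pass from $\varepsilon$-regularity to the smooth Alexandrov theorem of \cite{helimage}. An $\varepsilon$-regularity theorem --- and in fact an anisotropic one \emph{is} available, due to Allard \cite{AllardReg}, so that particular worry is misplaced --- gives $\cnt{1,\alpha}$ (and then, via Schauder, $\cnt{2,\alpha}$) regularity at points where the tilt-excess or density hypotheses are met. For the varifold $V = \var{n}(\partial^{\ast}E)$ this holds at $\Haus{n}$~almost every point of $\partial^{\ast}E$, and the hypothesis $\Haus{n}(\Clos(\partial^{\ast}E) \without \partial^{\ast}E) = 0$ ensures the support carries no extra $\Haus{n}$~mass. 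But neither of these rules out a nonempty singular set $\Sigma \subseteq \spt\|V\|$ with $\Haus{n}(\Sigma) = 0$. The theorem in \cite{helimage} requires a \emph{closed connected $\cnt{2}$ hypersurface}; with a residual singular set you have neither closedness nor control on how the regular pieces are arranged globally, and the argument collapses. This is not a technical oversight: even in the isotropic case Delgadino--Maggi \cite{DM00pub} could not close this gap by regularity alone and had to invent a measure-theoretic Montiel--Ros argument instead.

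The paper's route is structurally different and is designed precisely to avoid ever needing full regularity of $\partial E$. It works with the level sets $S^{F}(C,r)$ of the anisotropic distance to $C = \Real{n+1} \without \Omega$, which are \emph{automatically} $\cnt{1,1}$ hypersurfaces once one shows $\reach^{F} C > 0$. The bridge between the a.e.~$\cnt{2,\alpha}$ regular part of $\partial\Omega$ and these level sets is a Lusin~(N) condition for the anisotropic normal bundle (Theorem~\ref{Lusin property for (m,h) sets}), obtained by showing $\partial\Omega$ is an $(n,h)$-set via a weak maximum principle. With this in hand, a Montiel--Ros computation yields the Heintze--Karcher inequality~\eqref{Heintze-Karcher:eq1}; in the equality case the level sets $S^{F}(C,r)$ are shown to be totally umbilical $\Haus{n}$~a.e., and since they are already $\cnt{1,1}$ this suffices (Lemma~\ref{lem:umbilical}) to conclude they are boundaries of Wulff shapes. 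The rigidity of $\Omega$ then follows. In short: the paper transfers the problem to an object with a~priori regularity rather than trying to manufacture regularity on $\partial E$ itself, and this is where the substance lies --- not in the $\varepsilon$-regularity step you identified as the crux.
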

Since $F$ is assumed to be convex, but may fail to be $ \mathscr{C}^{1} $, the notion of first variation and critical points are suitably defined in \cite[p. 35-36]{OR}, using the convexity in time of the functional along any prescribed variational flow. Maggi specifies in \cite{OR} the significant interest from the physical viewpoint for crystalline integrands. Moreover he points out that this question is open even for smooth elliptic anisotropic energies and among sets with Lipschitz boundary. 

Delgadino and Maggi have settled the conjecture \cite[Conjecture]{OR} for the special case of the \emph{area functional} in \cite{DM00pub}, proving
that among sets of finite perimeter, finite unions of balls
with equal radii are the unique volume-constrained critical points of the isotropic surface area. Their beautiful proof provides a measure-theoretic revisiting of the Montiel-Ros argument \cite{montielros} by means of the Heintze-Karcher inequality. Recently the third author has obtained with different techniques in \cite{2019arXiv190805952S} a similar Heintze-Karcher inequality for sets of finite perimeter and bounded isotropic mean curvature and has proved that the equality case is uniquely characterized by finite unions of disjoint open balls, thus recovering the characterization of isotropic critical points. 

To deal with the lack of regularity of finite perimeter sets, Delgadino and Maggi need to use in \cite{DM00pub} the strong maximum principle for integral varifolds of Sch\"atzle \cite{schatzle}. Unfortunately, as they point out, this result is only available in the isotropic setting, preventing the extension of the method of \cite{DM00pub} to anisotropic integrands. They can threat in \cite{DM0} the special case of \emph{local minimizers}, since this allows to apply an anisotropic strong maximum principle proved in \cite{DPM} and provides the required regularity through the use of suitable competitors. Nevertheless these competition arguments are not applicable to study the general case of \emph{anisotropic critical points}.

In the present paper we address this problem, providing a positive answer to \cite[Conjecture]{OR} for elliptic integrands of class $ \mathscr{C}^{3} $ among finite perimeter sets with reduced boundary almost equal to its closure, see Corollary \ref{main corollary}.
Our main result, see Theorem~\ref{Heintze-Karcher}, is actually more general and it consists in the following anisotropic Heintze-Karcher inequality for sets of finite perimeter (we~refer to Section~\ref{prel} for the notation) and in the characterization of finite unions of disjoint open Wulff shapes (of possibly different radii) as the unique configurations realizing the equality case.
\begin{Theorem*}
  Suppose $F$ is an elliptic integrand of class~$\cnt{3}$ (see \ref{def:elliptic}), $\alpha \in (0,1)$, $c \in (0,\infty)$, $E \subseteq \Real{n+1}$ is
    a~set of finite perimeter such that $\Haus{n}(\Clos(\partial^{\ast} E)
    \without \partial^{\ast}E) = 0$ and the distributional anisotropic mean
    curvature~$H$ of $\partial^* E$ with respect to $ F $ in the direction of the interior normal satisfies $0 < H \le c$ and it is locally of class~$\cnt{0,\alpha}$
    on the~$\cnt{1,\alpha}$~regular part of $\spt \|V\|$. Then
    \begin{equation*}
        \Leb{n+1}(E) \leq \frac{n}{n+1}\int_{\partial E}\frac{F(\mathbf{n}(E,x))}{H(x)} \ud \Haus{n}(x) \,.
    \end{equation*}
    Equality holds if and only if $E$ coincides up to a set of
    $\Leb{n+1}$~measure zero with a finite union of disjoint open Wulff shapes
    with radii not smaller than $n/c$.
\end{Theorem*}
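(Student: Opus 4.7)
The plan is to adapt the measure-theoretic Montiel--Ros argument of Delgadino--Maggi \cite{DM00pub} to the anisotropic setting by replacing the inward Euclidean normal flow with the Cahn--Hoffman flow
\begin{equation*}
\Phi(x,t) = x - t\, DF(\mathbf{n}(E,x)), \qquad x \in R,\; t > 0,
\end{equation*}
where $R$ denotes the $\cnt{1,\alpha}$-regular part of the boundary. The strategy has four steps: (i)~show that $\Phi$ covers $\Leb{n+1}$-a.e.\ point of~$E$ as $(x,t)$ ranges over $R \times (0, n/H(x)]$; (ii)~bound the Jacobian of~$\Phi$ pointwise via AM--GM applied to the anisotropic principal curvatures; (iii)~integrate in~$t$ to produce the stated inequality; (iv)~analyse the equality case.

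For the covering in~(i), fix $y$ in the measure-theoretic interior of~$E$ and consider the largest dilated Wulff shape $y + rW_F$, with $W_F$ the unit Wulff shape dual to~$F$, still contained in that interior. Such a shape must be tangent to $\Clos(\partial^\ast E)$ at some point~$x$; the hypothesis $\Haus{n}(\Clos(\partial^\ast E) \without \partial^\ast E) = 0$ together with $\cnt{1,\alpha}$-regularity of $\partial^\ast E$ outside a negligible set forces $x \in R$ and $\Phi(x,r)=y$. An anisotropic strong maximum principle of the type developed in \cite{DPM}, applicable because $H$ is $\cnt{0,\alpha}$ on~$R$ and $F$ is elliptic of class~$\cnt{3}$, then produces the inscribed-radius bound $r \le n/H(x)$.

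For steps (ii)--(iii), on~$R$ one can differentiate $x \mapsto DF(\mathbf{n}(E,x))$ tangentially along $\partial^\ast E$ to obtain the anisotropic Weingarten map, whose eigenvalues $\kappa^F_1,\dots,\kappa^F_n$ sum to $H(x)$. A direct computation yields
\begin{equation*}
|J\Phi(x,t)| \;=\; F(\mathbf{n}(E,x))\prod_{i=1}^{n}\bigl(1-t\kappa^F_i(x)\bigr) \;\le\; F(\mathbf{n}(E,x))\Bigl(1-\frac{tH(x)}{n}\Bigr)^{n}
\end{equation*}
for $0 \le t \le n/H(x)$, the second inequality being AM--GM. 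The area formula combined with the covering from~(i) and integration of $(1-tH/n)^n$ from $0$ to $n/H$ then deliver the Heintze--Karcher estimate.

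For the equality case~(iv), saturation of AM--GM at $\Haus{n}$-a.e.\ $x \in R$ forces the anisotropic umbilicity condition $\kappa^F_1(x) = \cdots = \kappa^F_n(x) = H(x)/n$, whence $H$ is constant on each connected component of~$R$; the classification of anisotropically umbilical hypersurfaces used in \cite{helimage} identifies each component with an open subset of a Wulff shape of radius $n/H \ge n/c$. Essential injectivity of~$\Phi$, together with the closure hypothesis on $\Clos(\partial^\ast E) \without \partial^\ast E$, upgrades this local description to the conclusion that $E$ coincides, up to a set of $\Leb{n+1}$-measure zero, with a finite disjoint union of open Wulff shapes. The principal obstacle is step~(i): for a merely finite-perimeter set the classical nearest-point construction is not a~priori defined, and the closure hypothesis is tailored precisely to keep the tangency point in the regular stratum; nevertheless, one still needs an anisotropic strong maximum principle valid at the level of varifold regularity---the very ingredient lacking in the isotropic Sch\"atzle-based argument of \cite{DM00pub}, and for which the $\cnt{3}$-ellipticity of~$F$ and the $\cnt{0,\alpha}$-regularity of~$H$ on~$R$ in the statement are essential.
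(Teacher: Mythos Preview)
Your overall Montiel--Ros strategy (Cahn--Hoffman flow, Jacobian computation, AM--GM, anisotropic umbilicity in the equality case) matches the paper's. Steps~(ii) and~(iii) are essentially correct and coincide with the paper's Claims~3 and~4.

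The genuine gap is in step~(i). You write that the tangency point~$x$ of the maximal inscribed Wulff shape ``must'' lie in~$R$ because $\Haus{n}(\Clos(\partial^\ast E)\without\partial^\ast E)=0$ and the singular set of~$\partial^\ast E$ is $\Haus{n}$-null. But an $\Haus{n}$-null set can still be nonempty, and nothing prevents the tangency point from landing there; what you actually need is that for $\Leb{n+1}$-a.e.\ $y\in E$ the anisotropic nearest point $\bm{\xi}^F_C(y)$ lies in the regular part. This is a Lusin~(N) type statement for the projection, and it does \emph{not} follow from the measure-zero hypothesis alone. You then appeal to ``an anisotropic strong maximum principle of the type developed in \cite{DPM}'', but that result applies to local minimizers, not to critical points with merely bounded $H$; indeed the paper explicitly states that the unavailability of an anisotropic Sch\"atzle-type maximum principle is what blocks a direct extension of the Delgadino--Maggi argument. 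So you have correctly identified the obstacle but not removed it.

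The paper's substitute for the missing maximum principle is the following: it shows that $\spt\|V\|$ is an anisotropic $(n,h)$-set in the sense of \cite{2019arXiv190103514D}, and then proves (Theorem~\ref{Lusin property for (m,h) sets}) that the generalized normal bundle $N(A)$ of any $(n,h)$-set~$A$ satisfies the $n$-dimensional Lusin~(N) condition. This is the new ingredient; it lets one conclude $\Leb{n+1}(\Omega\without(\bm{\xi}^F_C)^{-1}(Q))=0$ by a coarea argument, which is exactly the covering you need. Your equality analysis is also too sketchy: the paper does not upgrade the local umbilicity of~$R$ directly, but instead uses a Steiner-formula criterion to obtain $\reach^F C\ge n/c$, transfers the umbilicity information to the $\cnt{1,1}$ level sets $S^F(C,r)$ via the Lusin~(N) property, applies the umbilical classification there, and then reads off that $\partial\Omega$ is a finite union of Wulff-shape boundaries.
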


As mentioned before, for any elliptic integrand of class $ \mathscr{C}^{3} $, we obtain the following characterization of finite unions of Wulff shapes as the only volume-constrained anisotropic critical points among finite perimeter sets with reduced boundary almost equal to its closure, see Corollary \ref{main corollary}. We denote by $ \mathcal{P}_{F} $ \emph{the $ F $-perimeter functional}, i.e.\ 
\begin{equation*}
 \mathcal{P}_{F}(E) = \int_{\partial^{\ast}E} F(\mathbf{n}(E,x))\, \ud \Haus{n}(x)
\end{equation*}
for every $ E \subseteq \Real{n+1} $ with finite perimeter.

\begin{Corollary*}
   Suppose $ E \subseteq \Real{n+1} $ is a finite perimeter set with finite volume such that
   \begin{equation*}
   	\Haus{n}(\Clos(\partial^{\ast} E) \without \partial^{\ast}E) = 0.
   \end{equation*}
  
  If $ E $ is a volume-constrained critical point of $ \mathcal{P}_{F} $, then $ E $ is equivalent to a finite union of disjoint open Wulff shapes.
\end{Corollary*}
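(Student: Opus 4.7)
The plan is to apply the main theorem (Theorem~\ref{Heintze-Karcher}) by verifying its hypotheses and then showing that volume-constrained criticality forces equality in the Heintze--Karcher inequality; the equality characterization will then identify $E$ with a finite disjoint union of Wulff shapes.

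First, being a volume-constrained critical point of $\mathcal{P}_F$ means that there is a Lagrange multiplier $\lambda$ such that the distributional anisotropic first variation of $\mathcal{P}_F$ along $E$ equals $\lambda$ times the distributional first variation of $\Leb{n+1}$. By the standard anisotropic $\varepsilon$-regularity theory for elliptic integrands of class $\cnt{3}$, the reduced boundary $\partial^{\ast}E$ is $\cnt{1,\alpha}$-regular away from a closed singular set of vanishing $\Haus{n}$ measure, and on this regular part the distributional anisotropic mean curvature satisfies $H \equiv \lambda$. In particular, the regularity hypothesis on $H$ required in Theorem~\ref{Heintze-Karcher} is trivially satisfied (a constant is $\cnt{0,\alpha}$), while $\Haus{n}(\Clos(\partial^{\ast}E) \without \partial^{\ast}E) = 0$ is part of the corollary's assumptions.

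To obtain both the sign $\lambda > 0$ and equality in Heintze--Karcher simultaneously, I would test the Euler--Lagrange identity $\delta \mathcal{P}_F(X) = \lambda\,\delta\Leb{n+1}(X)$ with the \emph{dilation vector field} $X(x) = x$. Since $\mathcal{P}_F$ and $\Leb{n+1}$ are positively $n$- and $(n+1)$-homogeneous under dilations, this yields the anisotropic Minkowski-type identity
\begin{equation*}
 n\,\mathcal{P}_F(E) \;=\; \lambda\,(n+1)\,\Leb{n+1}(E).
\end{equation*}
Because $F$ is positive on the sphere and $\partial^{\ast}E$ has positive $\Haus{n}$ measure (otherwise $E$ would be $\Leb{n+1}$-equivalent to $\varnothing$ or to $\Real{n+1}$, contradicting finiteness of volume and of perimeter jointly), we have $\mathcal{P}_F(E) > 0$, and hence $\lambda > 0$. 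This supplies the two-sided bound $0 < H \le \lambda$ required by Theorem~\ref{Heintze-Karcher}. Dividing the Minkowski identity by $(n+1)\lambda$ and using $H \equiv \lambda$,
\begin{equation*}
 \Leb{n+1}(E) \;=\; \frac{n}{(n+1)\lambda}\,\mathcal{P}_F(E) \;=\; \frac{n}{n+1}\int_{\partial^{\ast}E} \frac{F(\mathbf{n}(E,x))}{H(x)}\,\ud\Haus{n}(x),
\end{equation*}
which is precisely equality in the anisotropic Heintze--Karcher inequality. The equality case of Theorem~\ref{Heintze-Karcher} then concludes: $E$ coincides up to a $\Leb{n+1}$-null set with a finite disjoint union of open Wulff shapes.

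The main technical obstacle is justifying the use of the dilation vector field $X(x) = x$ in the weak Euler--Lagrange relation for finite perimeter sets: this field is neither compactly supported nor volume-preserving, so one must approximate by admissible compactly supported variations, exploit the finite volume of $E$ together with $\Haus{n}(\Clos(\partial^{\ast}E) \without \partial^{\ast}E) = 0$ to control boundary terms, and then pass to the limit to extract the scaling identity. A secondary point is ensuring that the distributional Euler--Lagrange equation actually upgrades to the pointwise identity $H \equiv \lambda$ on the $\cnt{1,\alpha}$ regular part, which relies on anisotropic elliptic regularity and on the hypothesis on the closure of the reduced boundary. Once these technical steps are in place, the remainder is a short invocation of Theorem~\ref{Heintze-Karcher}.
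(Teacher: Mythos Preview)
Your proposal is correct and lands on the same endgame as the paper: show that the anisotropic mean curvature is the constant $\lambda = \dfrac{n}{n+1}\dfrac{\mathcal{P}_F(E)}{\Leb{n+1}(E)}$, note this forces equality in the Heintze--Karcher inequality, and invoke Theorem~\ref{Heintze-Karcher}.

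The route to the constant differs, however. The paper avoids both the Lagrange-multiplier step and the dilation cutoff by first passing through the scale-invariant isoperimetric functional $\mathcal{I}_F = \mathcal{P}_F^{\,n+1}/\Leb{n+1}^{\,n}$. Given any local variation $h_t$, the rescaled flow $f_t(x) = \bigl(\Leb{n+1}(E)/\Leb{n+1}(h_t(E))\bigr)^{1/(n+1)} h_t(x)$ is volume-preserving, so volume-constrained criticality of $\mathcal{P}_F$ is equivalent to unconstrained criticality of $\mathcal{I}_F$; differentiating $\mathcal{I}_F(h_t(E))$ at $t=0$ then directly produces $\overline{\mathbf h}_F(V,x) = -\dfrac{n}{n+1}\dfrac{\mathcal{P}_F(E)}{\Leb{n+1}(E)}\,\mathbf{n}(E,x)$ for \emph{every} compactly supported test field, with no need to identify the multiplier by a separate Minkowski-type computation. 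Your approach via the dilation $X(x)=x$ yields the same constant but, as you correctly flag, requires an approximation by compactly supported fields and control of the tails (finite perimeter and finite volume do not by themselves give boundedness of $\partial^\ast E$). The paper's rescaling trick buys you a two-line argument with only compactly supported variations; your route is more classical and would work after the cutoff is carried out.
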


We describe now the structure of the paper. In Section \ref{prel}, after having recalled some background material, we provide some classical facts on Wulff shapes and we study some basic properties of the
anisotropic nearest point projection onto an arbitrary closed set. In Section \ref{tota} we prove that the
only totally umbilical closed and connected hypersurface of class~$\cnt{1,1}$ is
the Wulff shape. In Section \ref{lusi} we recall the notion of anisotropic
$(n,h)$-sets introduced in \cite{2019arXiv190103514D} and we prove that their generalized normal bundle satisfies a Lusin~(N) condition with respect to the $ n $ dimensional Hausdorff measure $ \Haus{n} $, thus extending an analogous result for isotropic $(n,h)$ sets obtained in \cite[3.7]{2019arXiv190310379S}. This is the key to obtain the main result of the paper. In~Section \ref{anis} we introduce the anisotropic normal bundle and we study its relation with the isotropic one and with the anisotropic nearest point projection; moreover we consider the anisotropic Steiner formula for closed
sets and we prove that every closed set satisfying such a formula has
positive reach. To conclude, in Section \ref{secmain} we combine all these tools to prove Theorem~\ref{Heintze-Karcher} and Corollary \ref{main corollary}.

\section{Preliminaries}\label{prel}

\subsection*{Notation}
The natural number $n \ge 1$ shall be fixed for the whole paper.

In~principle, but with some exceptions explained below, we shall follow the
notation of Federer (see~\cite[pp. 669 -- 671]{Federer1969}). Whenever $A
\subseteq \Real{n+1}$ we denote by $\Clos{A}$ the closure of~$A$
in~$\Real{n+1}$. Following Almgren (e.g.~\cite{Almgren2000}) if $T \in
\grass{n+1}{k}$, then we~write $\project{T}$ for the linear orthogonal
projection of~$\Real{n+1}$ onto~$T$. The symbol $\nat$ stands for the set of
non-negative integers. We~use standard abbreviations for intervals $(a,b) =
\Real{} \cap \{ t : a < t < b \}$ and $[a,b] = \Real{} \cap \{ t : a \le t \le b
\}$. We also employ the terminology introduced in \cite[3.2.14]{Federer1969}
when dealing with rectifiable sets. Moreover, given a measure $\phi$ and
a~positive integer~$m$ the notions of $(\phi,m)$~approximate tangent cone
$\Tan^{m}(\phi,\cdot)$, $(\phi,m)$~approximate differentiability and
$(\phi,m)$~approximate differential are used in agreement with
\cite[3.2.16]{Federer1969}. We also introduce the symbol $\sphere{n}$ for the
unit $n$-dimensional sphere in~$\Real{n+1}$.

Concerning varifolds and submanifolds of~$\Real{n+1}$ we use the notation
introduced in~\cite{Allard1972}. If $M$ is a~submanifold of~$\Real{n+1}$ of
class~$\cnt{1}$, we write $\VF(M)$ for compactly supported tangent vectorfields
on~$M$ of class~$\cnt{1}$; cf.~\cite[2.5]{Allard1972}. We say that $M$ is a
\emph{closed submanifold} of~$\Real{n+1}$ if it is a~submanifold of~$\Real{n+1}$
and a~closed (but not necessarily compact) subset of~$\Real{n+1}$;
in~particular, $\partial M \without M = \varnothing$.

We also use the following convention.  Whenever $X$, $Y$ are vectorspaces, $A
\subseteq X$, and $f : A \to Y$ we write $\Der f$ for the derivative of~$f$ that
is a~$\Hom(X,Y)$~valued function whose domain is the set of points
of~differentiability of~$f$. If~$Y = \Real{}$ and $X$ is equipped with a~scalar
product, then we write $\grad f$ for the $X$~valued function characterised by
\begin{displaymath}
    \bigl\langle u ,\, \Der f(x) \bigr\rangle = \grad f(x) \bullet u
    \quad \text{for $x \in \dmn \Der f$ and $u \in X$} \,.
\end{displaymath}

\subsection*{Pointwise differentiability}

\begin{Definition}[\protect{cf.~\cite[\S{2.7}]{snulmenn:sets}}]
    Let $k \in \nat$, $X$, $Y$ be normed vectorspaces, $A \subseteq X$, $f : A
    \to Y$, and $a \in X$. Then $f$ is called \emph{pointwise differentiable of
      order~$k$ at~$a$} if there exists an open set $U \subseteq X$ and
    a~function $g : U \to Y$ of class~$k$ such that
    \begin{displaymath}
        a \in U \subseteq A \,,
        \quad
        f(a) = g(a) \,,
        \quad \text{and} \quad
        \lim_{x \to a} \frac{|f(x) - g(x)|}{|x-a|^k} = 0 \,.
    \end{displaymath}
    Whenever this is satisfied one defines also the \emph{pointwise differential
      of order~$i$ of~$f$ at~$a$} by
    \begin{displaymath}
        \pt \Der^i f(a) = \Der^i g(a)
        \quad \text{for $i \in \{ 0, 1, \ldots, k \}$} \,.
    \end{displaymath}
\end{Definition}

\begin{Definition}[\protect{cf.~\cite[\S{3.3}]{snulmenn:sets}}]
    Suppose $k,n \in \nat$ and $A \subseteq \Real{n+1}$. Then $A$ is called
    \emph{pointwise differentiable of order~$k$ at~$a$} if there exists
    a~submanifold~$B$ of~$\Real{n+1}$ of class~$k$ such that $a \in B$,
    \begin{gather}
        \lim_{r \downarrow 0} r^{-1} \sup | \dist{\cdot}{A} - \dist{\cdot}{B} | [\cball ar] = 0 \,,
        \\
        \text{and} \quad
        \lim_{r \downarrow 0} r^{-k} \sup \dist{\cdot}{B} [A \cap \cball ar] = 0 \,.
    \end{gather}
\end{Definition}

\begin{Definition}[\protect{cf.~\cite[\S{3.12}]{snulmenn:sets}}]
    Suppose $n,k \in \nat$ and $A \subseteq \Real{n+1}$. Then $\pt \Der^k A$ is
    the function whose domain consists of pairs $(a,S)$ such that $a \in
    \Clos{A}$, $A$ is pointwise differentiable of order~$k$ at~$a$, $S \in
    \grass{n+1}{\dim \Tan(A,a)}$, and $S^{\perp} \cap \Tan(A,a) = \{0\}$ and whose
    value at $(a,S)$ equals the unique $\phi \in \bigodot^k(\Real{n+1},\Real{n+1})$
    such that whenever $f : S \to S^{\perp}$ is of class~$k$ and satisfies
    \begin{gather}
        \lim_{r \downarrow 0} r^{-1} \sup | \dist{\cdot}{A} - \dist{\cdot}{B} | [\cball ar] = 0 \,,
        \\
        \text{and} \quad
        \lim_{r \downarrow 0} r^{-k} \sup \dist{\cdot}{B} [A \cap \cball ar] = 0 \,,
    \end{gather}
    where $B = \{ x + f(x) : x \in S \}$, then $\phi = \Der^k (f \circ \project
    S)(a)$.
\end{Definition}

\begin{Remark}[\protect{cf.~\cite[\S\S{3.14, 3.15}]{snulmenn:sets}}]
    Assume $n,d,k \in \nat$, $S \in \grass {n+1}d$, $U \subseteq S$ is open, $f : U
    \to S^{\perp}$ is continuous, $x \in U$, $A = \{ \chi + f(\chi) : \chi \in S
    \}$. Then $A$ is pointwise differentiable of order~$k$ at~$a = x + f(x)$ if
    and only if $f$ is pointwise differentiable of order~$k$ at~$x$. Moreover,
    $\pt \Der^i A(a,S) = \pt \Der^i (f \circ \project S)(x)$ for $i \in \{ 0, 1,
    \ldots, k \}$.
\end{Remark}

\subsection*{The unit normal bundle of a closed set}
Let $ A \subseteq \Real{n+1} $ be a closed set. 

\begin{Definition}
Given $ A \subseteq \Real{n+1} $ we define \emph{the distance function to $ A $} as 
\begin{equation*}
\bm{\delta}_{A}(x) = \inf\{ |x-a|:a \in A   \} \quad \textrm{for every $ x \in \Real{n+1} $.}
\end{equation*}
Moreover, 
\begin{equation*}
S(A,r) = \{ x : \bm{\delta}_{A}(x) = r \} \quad \textrm{for $ r > 0 $.}
\end{equation*}
\end{Definition}

\begin{Remark}[\protect{cf.\ \cite[2.13]{2017arXiv170801549S}}]
If $ r > 0 $ then $ \Haus{n}(S(A,r)\cap
K)< \infty $ whenever $ K \subseteq \Real{n} $ is compact and $ S(A,r) $ is
countably $ \rect{n} $ rectifiable of class $ 2 $.
\end{Remark}

\begin{Definition}[\protect{cf.\ \cite[3.1]{2017arXiv170801549S}}]
If $U$ is the set of all $x \in \Real{n+1}$ such that there exists a unique $a
\in A$ with $|x-a| = \bm{\delta}_{A}(x)$, we define the \textit{nearest
	point projection onto~$A$} as the map $\bm{\xi}_{A}$ characterised by the
requirement
\begin{displaymath}
| x- \bm{\xi}_{A}(x)| = \bm{\delta}_{A}(x) \quad \textrm{for $x \in U$}.
\end{displaymath}
We set $U(A) = \dmn \bm{\xi}_{A} \without A$. The functions $ \bm{\nu}_{A} $ and $
\bm{\psi}_{A} $ are defined by
\begin{displaymath}
\bm{\nu}_{A}(z) = \bm{\delta}_{A}(z)^{-1}(z - \bm{\xi}_{A}(z))
\quad \textrm{and} \quad
\bm{\psi}_{A}(z)= (\bm{\xi}_{A}(z), \bm{\nu}_{A}(z)),
\end{displaymath}
whenever $ z \in U(A)$.
\end{Definition}

\begin{Definition}[\protect{cf.\ \cite[3.6, 3.8, 3.13]{2017arXiv170801549S}}]\label{regular ponints}
We define the function $\rho(A, \cdot)$ setting
\begin{displaymath}
\rho(A,x) = \sup \bigl\{
t : \bm{\delta}_{A}(\bm{\xi}_{A}(x) + t  (x-\bm{\xi}_{A}(x) )) = t \bm{\delta}_{A}(x)
\bigr\}
\quad \textrm{for $ x \in U(A)$} \,,
\end{displaymath}
and we say that $ x \in U(A) $ is a \emph{regular point of $ \bm{\xi}_{A} $}
if and only if $ \bm{\xi}_{A}$ is approximately differentiable at $ x $ with
symmetric approximate differential and $ \ap \lim_{y \to x} \rho(A,y)
\geq \rho(A,x)>1$. The set of regular points of
$\bm{\xi}_{A}$ is denoted by $R(A)$.

For $\tau \ge 1$ we define
\begin{displaymath}
A_{\tau} = U(A) \cap \{ x : \rho(A,x) \ge \tau \} \,.
\end{displaymath}
\end{Definition}

\begin{Remark}[\protect{cf.\ \cite[3.7]{2017arXiv170801549S}}]
The function $ \rho(A, \cdot) $ is upper semicontinuous and its image is contained $ [1, \infty] $.
\end{Remark}

\begin{Definition}[\protect{cf.\ \cite[4.9]{2017arXiv170801549S}}]
Suppose $ x \in R(A) $. Then $\chi_{A,1}(x) \leq \ldots \leq \chi_{A,n}(x)$ denote the eigenvalues of the symmetric linear map $ \ap \Der \bm{\nu}_{A}(x)| \{v : v \bullet \bm{\nu}_{A}(x) =0  \} $.
\end{Definition}

\begin{Remark}
 Notice that $ \Haus{n}(S(A,r) \sim R(A)) =0 $ for $ \Leb{1} $ a.e.\ $ r > 0 $ (cf.\ \cite[3.16]{2017arXiv170801549S}) and 
 \begin{equation*}
 \Tan^{n}(\Haus{n}\restrict S(A,r), x) =  \{v : v \bullet \bm{\nu}_{A}(x) =0  \}
 \end{equation*}
 for $ \Haus{n} $ a.e.\ $ x \in S(A,r) $ and for $ \Leb{1} $ a.e.\ $ r > 0 $, cf.\ \cite[3.12]{2017arXiv170801549S}. 
 
 The functions $ \chi_{A,i} $ are the approximate principal curvatures of $ S(A,r) $ in the direction of $ \bm{\nu}_{A}(x) $. In fact, as proved in \cite[3.12]{2017arXiv170801549S}, they coincide with the eigenvalues the approximate second-order differential $ \ap \Der^{2}S(A,r) $ of $ S(A,r) $; cf.\ \cite{San} for the general theory of higher order approximate differentiability for sets.
\end{Remark}

\begin{Definition}[\protect{cf.\ \cite[4.1]{2017arXiv170801549S}, \cite[\S 2.1]{MR2031455}}]
 \emph{The generalized unit normal bundle of $ A $} is defined as
\begin{displaymath}
N(A) = (A \times \sphere{n}) \cap \{ (a,u) :
\bm{\delta}_{A}(a+su)=s \; \textrm{for some $ s > 0 $}\}
\end{displaymath} and $ N(A,a) = \{ v : (a,v) \in N(A) \} $ for $ a \in A $.
\end{Definition}

\begin{Remark}[\protect{cf.\ \cite[4.3]{2017arXiv170801549S}}]
	The set $ N(A) $ is a countably $ n $ rectifiable subsets of $ \Real{n+1} \times \mathbf{S}^{n} $.
\end{Remark}

\subsection*{Anisotropic integrands and mean curvature}

\begin{Definition}\label{integrands}
    Let $k \in \nat$, $\alpha \in [0,1]$. By an \emph{integrand of
      class~$\cnt{k,\alpha}$} we mean a~non-negative function $F :\Real{n+1} \to
    \Real{}$ such that $F|{\Real{n+1} \without \{0\}}$ is of
    class~$\cnt{k,\alpha}$ and
    \begin{equation}
        \label{eq:F-one-homog}
        F(\lambda \nu) = |\lambda| F(\nu)
        \quad \text{for $\nu \in \Real{n+1}$ and $\lambda \in \Real{}$}
    \end{equation}

    By an \emph{integrand} we mean an integrand of class~$\cnt{0}$.
\end{Definition}

\begin{Remark}\label{integrands: remark}
    \label{rem:convex-integrands}
    If $F$ is convex, then it is a~norm on~$\Real{n+1}$. We say that $F$ is
    a~\emph{strictly convex norm} if it is an integrand satisfying
    \begin{displaymath}
        F(x+y) < F(x) + F(y)
        \quad \text{for all linearly independent $x,y \in \Real{n+1}$} \,.
    \end{displaymath}
\end{Remark}

\begin{Definition}[\protect{cf.~\cite[5.1.2]{Federer1969} and~\cite[3.1(4)]{AllardReg}}]
    \label{def:elliptic}
    We say that an integrand~$F$ is~\emph{elliptic} if there exists a number
    $\gamma > 0$ such that the map $\Real{n+1} \ni u \mapsto F(u) - \gamma |u|$
    is convex. We call $\gamma$ the \emph{ellipticity constant of~$F$}.
\end{Definition}

\begin{Remark}[\protect{cf.~\cite[5.1.3]{Federer1969}}]
    Assume $F$ is an integrand of class~$\cnt{1,1}$. Then ellipticity of~$F$
    with ellipticity constant $\gamma > 0$ is equivalent to the condition
    \begin{equation}
        \label{ell2}
        \bigl\langle (v,v) ,\, \Der^2 F(u) \bigr\rangle
        \ge \gamma \frac{|u \wedge v|^2}{|u|^3}
        = \gamma \frac{|v|^2 - (v \bullet u/|u|)^2}{|u|}
        \quad \text{for $u \in \dmn \Der^2 F$, $u \ne 0$, $v \in \Real{n+1}$} \,.
    \end{equation}
    In~particular, if $F$ is elliptic, $u \in \dmn \Der^2 F$, $|u| = 1$, and $v
    \in \lin\{ u \}^{\perp}$, then
    \begin{displaymath}
        \bigl\langle (v,v) ,\, \Der^2 F(u) \bigr\rangle
        \ge \gamma |v|^2 \,,
    \end{displaymath}
    which shows that $F$ is uniformly elliptic in the sense
    of~\cite[\S{2}]{2019arXiv190103514D}.
\end{Remark}

\begin{Definition}
    \label{def:CF}
    Assume $F$ is an elliptic integrand with ellipticity constant $\gamma >
    0$. We define
    \begin{displaymath}
        C(F) = \sup \bigl(
        \bigl\{ \gamma^{-1} ,\, \sup F[\sphere{n}] / \inf F[\sphere{n}] \bigr\}
        \cup \bigl\{ \| \Der^2 F(\nu) \| : \nu \in \sphere{n} \cap \dmn \Der^2 F \bigr\} 
        \bigr) \,.
    \end{displaymath}
\end{Definition}

\begin{Remark}
    Let $U \subseteq \Real{n+1}$ be open. For any $T \in \grass{n+1}{n}$ we
    choose arbitrarily $\nu(T) \in T^{\perp}$ such that $|\nu(T)| = 1$. In the
    sequel we shall tacitly identify any $V \in \Var{n}(U)$ with a~Radon
    measure~$\bar V$ over~$U \times \Real{n+1}$ such that
    \begin{displaymath}
        \bar V(\alpha) = \frac 12 \int \alpha(x,\nu(T)) + \alpha(x,-\nu(T)) \ud V(x,T)
        \quad \text{for $\alpha \in C^0_c(U,\Real{})$} \,.
    \end{displaymath}
    Clearly, this definition does not depend on the choice of~$\nu(T)$.
\end{Remark}

\begin{Definition}
    \label{def:fst-var-wrt-F}
    Let $U \subseteq \Real{n+1}$ be open, $F$ be an integrand of
    class~$\cnt{1}$, $V \in \Var{n}(U)$. We~define the \emph{first variation
      of~$V$ with respect to~$F$} by the formula
    \begin{displaymath}
        \delta_F V(g) =  \int \Der g(x) \bullet B_F(\nu) \ud V(x,\nu)
        \quad \text{for $g \in \VF(U)$} \,,
    \end{displaymath}
    where $B_F(\nu) \in \Hom(\Real{n+1}, \Real{n+1})$ is given by
    \begin{displaymath}
        B_F(\nu)u = F(\nu) u - \nu \cdot \langle u ,\, \Der F(\nu) \rangle
        \quad \text{for $\nu,u \in \Real{n+1}$, $\nu \ne 0$} \,.
    \end{displaymath}
\end{Definition}

\begin{Remark}[\protect{cf.~\cite{AllardReg},\cite[Appendix~A]{DPDRG2018},\cite{DeRosa}}]
    \label{rem:fst-var}
    If $\varphi : \Real{} \times \Real{n+1} \to \Real{n+1}$ is smooth, $\varphi(0,x)
    = x$ for $x \in \Real{n+1}$, and $g = \frac{\ud}{\ud t}|_{t=0}
    \varphi(t,\cdot) \in \VF(\Real{n+1})$, then
    \begin{displaymath}
        \left. \frac{\ud}{\ud t} \right|_{t=0} \Phi_F(\varphi_{t\#}V) = \delta_FV(g) \, ,
    \end{displaymath}
    where the functional $\Phi_F :
    \Var{n}(U) \to [0,\infty]$ is defined as 
    \begin{displaymath}
        \Phi_F(V) = \int F(\nu) \ud V(x,\nu) \,.
    \end{displaymath}
\end{Remark}

\begin{Definition}[\protect{cf.~\cite[\S{2}]{2019arXiv190103514D}}]
    \label{def:F-mean-curvature}
    Let $\Omega \subseteq \Real{n+1}$ be open, $V \in \Var{n}(\Omega)$, $F :
    \Real{n+1} \to \Real{}$ be an integrand of class~$\cnt{1}$. Assume that
    $\| \delta_F V \|$ is a~Radon measure. Then
    \begin{displaymath}
        \delta_FV(g)
        = - \int \overline{\mathbf{h}}_F(V,x) \bullet g(x) \ud \|V\|(x)
        + \int \boldsymbol{\eta}_F(V,x) \bullet g(x) \ud \|\delta_F V\|_{\mathrm{sing}}(x)
        \quad \text{for $g \in \mathscr{X}(\Omega)$} \,,
    \end{displaymath}
    where $\|\delta_F V\|_{\mathrm{sing}}$ is the singular part of~$\|\delta_F
    V\|$ with respect to~$\|V\|$, $\overline{\mathbf{h}}_F(V,\cdot)$ is
    an~$\Real{n+1}$~valued $\|V\|$-integrable function, and
    $\boldsymbol{\eta}_F(V,\cdot)$ is an~$\sphere{n}$~valued $\| \delta_F V
    \|$-integrable function.

    For $\|V\|$-a.e.~$x$ we define the~\emph{$F$-mean curvature vector
      of~$V$ at~$x$}, denoted $\mathbf{h}_F(V,x)$, by the formula
    \begin{displaymath}
        \mathbf{h}_F(V,x) = \frac{\overline{\mathbf{h}}_F(V,x)}{\int F(\nu) \ud V^{(x)}(\nu)} \,,
    \end{displaymath}
    where $V^{(x)}$ is the probability measure on~$\sphere{n}$ coming from
    disintegration of~$V$; see~\cite[\S{3.3}]{Allard1972}.
\end{Definition}

\begin{Definition}
    Define $\bm{\Xi} : \bigodot^2 \Real{n+1} \to \Hom(\Real{n+1},\Real{n+1})$ to be
    the~linear map characterised by
    \begin{displaymath}
        \langle u ,\, \bm{\Xi}(A) \rangle \bullet v = A(u,v)
        \quad \text{for $A \in {\textstyle \bigodot^2} \Real{n+1}$ and $u,v \in \Real{n+1}$} \,.
    \end{displaymath}  
\end{Definition}

\begin{Remark}
    In particular, if $f : \Real{n+1} \to \Real{}$ is twice differentiable at $x
    \in \Real{n+1}$, then
    \begin{displaymath}
        \bm{\Xi}(\Der^2f(x)) = \Der(\grad f)(x) \in \Hom(\Real{n+1},\Real{n+1}) \,.
    \end{displaymath}
\end{Remark}

\begin{Remark}
    \label{rem:Fmc-for-smooth}
    Let $G \subseteq \Real{n+1}$ be open, $v_1,\ldots,v_{n+1}$ be an orthonormal
    basis of $\Real{n+1}$, $M$ be a submanifold of~$G$ of dimension~$n$ of
    class~$\cnt{2}$, $V = \var{n}(M) \in \Var{n}(G)$, $x \in M$, $\nu : G \to
    \Real{n+1}$ be of class~$\cnt{1}$ and satisfy
    \begin{equation}
        \label{eq:nu-conditions}
        |\nu(y)| = 1 \,,
        \quad
        \nu(y) \in \Nor(M,y) \,,
        \quad \text{and} \quad
        \langle \nu(y) ,\, \Der \nu(y) \rangle = 0
        \quad \text{for $y \in M$} \,.
    \end{equation}
    In~\cite[Proposition~2.1]{2019arXiv190103514D} the authors show that if $ F $ is an elliptic integrand of class $ \mathscr{C}^{2} $ then 
    \begin{displaymath}
        - F(\nu(x)) \mathbf{h}_F(V,x)
        = \nu(x) \trace \bigl( \Der (\grad F \circ \nu)(x)\bigr)
        = \nu(x) \sum_{j = 1}^{n+1} 
        \bigl\langle (\Der \nu (x) v_j, v_j) ,\, \Der^2 F(\nu(x)) \bigr\rangle
        \,.
    \end{displaymath}
\end{Remark}

\begin{Definition}[\protect{cf.~\cite[4.5.5]{Federer1969}}]
    \label{def:out-normal-vector}
    Let $A \subseteq \Real{n+1}$ and $b \in \Real{n+1}$. We say that $u$ is
    an~\emph{exterior normal of~$A$ at~$b$} if $u \in \Real{n+1}$, $|u| = 1$,
    \begin{gather}
        \density^{n+1}(\Leb{n+1} \restrict \{ x : (x-b) \bullet u > 0 \} \cap A,b) = 0 \,,
        \\
        \text{and} \quad
        \density^{n+1}(\Leb{n+1} \restrict \{ x : (x-b) \bullet u < 0 \} \without A,b) = 0 \,.
    \end{gather}
    We also set $\mathbf{n}(A,b) = u$ if $u$ is the exterior normal of~$A$
    at~$b$ and $\mathbf{n}(A,b) = 0$ if there exists no exterior normal of~$A$
    at~$b$.
\end{Definition}

\begin{Definition}
    \label{def:anis-normal}
    Let $E \subseteq \Real{n+1}$ and $x \in \Real{n+1}$. We define 
    \begin{displaymath}
        \mathbf{n}^{F}(E,x) = \grad F(\mathbf{n}(E,x)) \quad \text{if $\mathbf{n}(E,x) \ne 0$}
        \quad \text{and} \quad
        \mathbf{n}^{F}(E,x) = 0  \quad \text{if $\mathbf{n}(E,x) = 0$} \,.
    \end{displaymath}
\end{Definition}

\begin{Remark}
    \label{rem:two-self-adjoint}
    Assume $X$ is a~Hilbert space, $\dim X = k \in \nat$, $A,B \in \Hom(X,X)$
    are self-adjoint automorphisms of~$X$, and $A$ is positive definite. With
    the help of the (tiny) spectral theorem~\cite[Chap.~VIII,
    Thm.~4.3]{LangLA3rd} we find a self-adjoint and positive definite map $C \in
    \Hom(X,X)$ such that $A = C \circ C$. Next, we observe that $E = C^{-1}
    \circ A \circ B \circ C = C \circ B \circ C$ is self-adjoint. Employing
    again the (tiny) spectral theorem we find an orthonormal basis
    $v_1,\ldots,v_{k} \in X$ and real numbers $\lambda_1,\ldots,\lambda_{k}$
    such that $E v_i = \lambda_i v_i$ for $i \in \{1,2,\ldots,k\}$. We obtain
    \begin{displaymath}
        A \circ B (Cv_i) = C \circ E v_i = \lambda_i Cv_i \quad \text{for $i \in \{1,2,\ldots,k\}$} 
    \end{displaymath}
    and we see that $Cv_1,\ldots,Cv_{k}$ is a basis of eigenvectors of~$A \circ
    B$ with eigenvalues $\lambda_1,\ldots,\lambda_{k}$.

    In particular, if $G$, $M$, $x$, and $\nu$ are as
    in~\ref{rem:Fmc-for-smooth}, $F$ is an elliptic integrand, $u = \nu(x) \in
    \dmn \Der^2 F$, and $X = \Tan(M,x)$, then the maps $A = \bm{\Xi}( \Der^2
    F(\nu(x)) | X \times X )$ and $B = \Der \nu(x)|X \in \Hom(X,X)$ are
    self-adjoint and $A$ is positive definite; hence, $A \circ B$ has exactly
    $n$~real eigenvalues.

    Observe also that since $F$ is positively $1$-homogeneous, $\grad F$ is
    positively $0$-homogeneous, i.e., $\grad F(\lambda v) = \grad F(v)$ for
    $\lambda \in (0,\infty)$ and $v \in \dmn \grad F$; hence,
    \begin{equation}
        \label{eq:v-in-ker-DgradFv}
        v \in \ker \Der (\grad F)(v)
        \quad \text{for $v \in \dmn \Der^2 F$} \,.
    \end{equation}
    Since $\Der^2F(\nu(x)) \in \bigodot^2 \Real{n+1}$ is symmetric it follows
    that $\Der(\grad F)(\nu(x)) \in \Hom(\Real{n+1},\Real{n+1})$ is self-adjoint
    and we have
    \begin{displaymath}
        \im \Der(\grad F)(\nu(x)) = \bigl( \ker \Der(\grad F)(\nu(x)) \bigr)^{\perp}
    \end{displaymath}
    so that $\Der(\grad F)(\nu(x))|X \in \Hom(X,X)$
    by~\eqref{eq:v-in-ker-DgradFv}. Seeing that also $\Der \nu(x)|X \in
    \Hom(X,X)$ we conclude
    \begin{displaymath}
        \Der(\grad F \circ \nu)(x)|X \in \Hom(X, X) \,.
    \end{displaymath}
\end{Remark}

\begin{Definition}
    \label{def:F-princ-curv}
    Let $F$ be an elliptic integrand of class~$\cnt{1,1}$, $G \subseteq
    \Real{n+1}$ be open, $M$ be a~submanifold of~$G$ of dimension~$n$ of
    class~$\cnt{1,1}$, $\nu : G \to \Real{n+1}$ be Lipschitz continuous and such
    that $|\nu(z)| = 1$ and $\nu(z) \in \Nor(M,z)$ for $z \in M$, $x \in \dmn
    \Der \nu$, and $u = \grad F(\nu(x))$. We define the \emph{$F$-principal
      curvatures of~$M$ at~$(x,u)$}
    \begin{displaymath}
        \kappa_{M,1}^F(x,u) \le \ldots \le \kappa_{M,n}^F(x,u) 
    \end{displaymath}
    to be the eigenvalues of the map $\Der(\grad F \circ \nu)(x)|\Tan(M,x) \in
    \Hom(\Tan(M,x), \Tan(M,x))$; cf.~\ref{rem:two-self-adjoint}.
\end{Definition}

\begin{Remark}
    \label{rem:F-princ-curc-and-mc}
    Clearly if $V = \var{n}(M) \in \Var{n}(G)$, then
    \begin{displaymath}
        \overline{\mathbf{h}}_F(V,x) = -\nu(x) {\textstyle \sum_{i=1}^n} \kappa_{M,i}^F(x,u) \,.
    \end{displaymath}
\end{Remark}

\begin{Definition}
    \label{def:ptwise-F-mc}
    Assume $M \subseteq \Real{n+1}$ is pointwise differentiable of order~$2$
    at~$a \in \Clos{M}$, $T \in \grass{n+1}{n}$, $f : T \to T^{\perp}$ is
    pointwise differentiable of order~$2$ at~$0$, $f(0) = 0$, $\pt \Der f(0) =
    0$, $B = \Real{n+1} \cap \{ a + x + f(x) : x \in T \}$, $\nu \in T^{\perp}$,
    $|\nu| = 1$, and
    \begin{gather}
        \lim_{r \downarrow 0} r^{-1} \sup | \dist{\cdot}{M} - \dist{\cdot}{B} | [\cball ar] = 0 \,,
        \\
        \text{and} \quad
        \lim_{r \downarrow 0} r^{-2} \sup \dist{\cdot}{B} [M \cap \cball ar] = 0 \,.
    \end{gather}
    We define the \emph{pointwise $F$-mean curvature vector of~$M$ at~$a$}, denoted
    $\pt \mathbf{h}_F(M,a)$, by the formula
    \begin{displaymath}
        - F(\nu) \pt \mathbf{h}_F(M,a)
        = \trace \bigl( \bm{\Xi}(\Der^2 F(\nu)) \circ \bm{\Xi}(\pt \Der^2 (f \circ \project T)(0) \bullet \nu)  \bigr)  \,.
    \end{displaymath}
\end{Definition}

\begin{Remark}
    Note that the above definition does not depend on the choice of~$\nu$
    and~$f$. In~particular, if $\bar \nu = - \nu$, then
    recalling~\eqref{eq:F-one-homog} we obtain
    \begin{gather}
        F(\nu)
        = \langle \nu ,\, \Der F(\nu) \rangle
        = \langle \bar \nu ,\, \Der F(\bar \nu) \rangle
        = F(\bar \nu) \,; \\
        \text{hence,} \quad
        \nu \bigl\langle (\nu, \nu) ,\, \Der^2 F(\nu) \bigr\rangle
        = \nu \bigl\langle (\nu, \bar \nu) ,\, \Der^2 F(\bar \nu) \bigr\rangle
        = \bar \nu \bigl\langle (\bar \nu, \bar \nu) ,\, \Der^2 F(\bar \nu) \bigr\rangle \,.
    \end{gather}
\end{Remark}

\subsection*{Anisotropic nearest point projection and related objects}
\begin{Definition}
    Let $F : \Real{n+1} \to \Real{}$ be a norm, $x \in \Real{n+1} $ and $r > 0$. We define 
    \begin{displaymath}
        \mathbf{U}^{F}(x,r) = \{ y : F(y-x) < r  \}
        \quad \text{and} \quad
        \cballF{F}{x}{r} = \{ y : F(y-x) \leq r  \} \,.
    \end{displaymath}
\end{Definition}

\begin{Definition}\label{Wulff shape}
    Let $F : \Real{n+1} \to \Real{}$ be a~norm. Define the \emph{conjugate norm
      $F^*$} on~$\Real{n+1}$ by setting
    \begin{displaymath}
        F^*(w) = \sup \{ w \bullet u : u \in \Real{n+1} ,\, F(u) \le 1 \} \,.
    \end{displaymath}
    By a~\emph{Wulff shape (of $F$)} we mean any open ball with respect to the
    $F^*$~norm.
\end{Definition}

\begin{Definition}
    Given $ A \subseteq \Real{n+1} $, we define \emph{the anisotropic distance
      function to $ A $} as
    \begin{displaymath}
    \bm{\delta}_{A}^{F}(x) = \inf\{ F^{\ast}(a-x) : a \in A   \} \quad \textrm{for every $ x \in \Real{n+1} $}.
    \end{displaymath}
    Moreover,
    \begin{displaymath}
    S^{F}(A,r) = \{ x : \bm{\delta}_{A}^{F}(x) = r  \} \quad \textrm{for $ r > 0 $.}
    \end{displaymath}
\end{Definition}

\begin{Definition}
    \label{nearest point projection}
    Suppose $ A \subseteq \Real{n} $ is closed and $W$ is the set of all $x \in
    \Real{n}$ such that there exists a~unique $a \in A$ with $F^{\ast}(x-a) =
    \bm{\delta}_{A}^{F}(x)$. The \textit{anisotropic nearest point projection
      onto~$A$} is the map $\bm{\xi}^{F}_{A} : W \to A$ characterised by the
    requirement
    \begin{displaymath}
        F^{\ast}(x- \bm{\xi}^{F}_{A}(x)) = \bm{\delta}_{A}^{F}(x) \quad \textrm{for $x \in W$}.
    \end{displaymath}
    We also define $ \bm{\nu}^{F}_{A}: W \without A \to \partial
    \cballF{F^*}{0}{1}$ and $\bm{\psi}^{F}_{A} : W \without A \to A \times
    \partial \cballF{F^*}{0}{1}$ by the formulas
    \begin{displaymath}
        \bm{\nu}_{A}^{F}(z) = \bm{\delta}_{A}^{F}(z)^{-1}(z -  \bm{\xi}^{F}_{A}(z))
        \quad \text{and} \quad
        \bm{\psi}^{F}_{A}(z)= (\bm{\xi}^{F}_{A}(z), \bm{\nu}^{F}_{A}(z))
        \quad \text{for $z \in W \without A$} \,.
    \end{displaymath}
\end{Definition}

\begin{Definition}[\protect{cf.~\cite[Def.~3.54]{AFP00}}]
    \label{def:red-bdry}
    Let $A \subseteq \Real{n+1}$ be a set of finite perimeter and $V =
    \var{n+1}(A) \in \Var{n+1}(\Real{n+1})$. Then $\| \delta V \|$ is a~Radon
    measure (cf.~\cite[4.7]{Allard1972}) and there exists $\| \delta V
    \|$~measurable function $\bm{\eta}(V,\cdot)$ with values in $\sphere{n}$ as
    in~\cite[4.3]{Allard1972}. We define the~\emph{reduced boundary of~$A$},
    denoted $\partial^{\ast} A$, as the set of points $x \in \dmn
    \bm{\eta}(V,\cdot)$ for which
    \begin{displaymath}
        \| \delta V \|\,\cball xr > 0 \quad \text{for $r > 0$ and} \quad
        \lim_{r \downarrow 0} \frac{1}{\|\delta V\|\, \cball xr}  \int_{\cball xr} \bm{\eta}(V,\cdot) \ud \|\delta V\| = \bm{\eta}(V,x) \,.
    \end{displaymath}
\end{Definition}

In the next lemma we summarize a few facts about relations between~$F$ and~$F^*$.

\begin{Lemma}
    \label{lem:Wulff-as-image}
    Let $F$ be a~strictly convex norm of class~$\cnt{1,1}$, $F^*$ its conjugate,
    $W = \oballF{F}01$, $W^* = \oballF{F^*}01$, $G,G^* : \Real{n+1} \to
    \Real{n+1}$ be given by $G = \grad F$ and $G^* = \grad F^*$.

    The following hold.
    \begin{enumerate}
    \item 
        \label{i:W:norm-G=1}
        $F^*(G(x)) = 1$ and $F(G^*(x)) = 1$ for any $x \in \Real{n+1} \without
        \{0\}$.
    \item
        \label{i:W:G-lip-homeo}
        $G|{\partial W} : \partial W \to \partial W^*$ is a~Lipschitz
        homeomorphism.
    \item
        \label{i:W:F-from-G}
        $F^*(x) = x \bullet G^*(x)$ and $F(x) = x \bullet G(x)$ for $x \in
        \Real{n+1} \without \{0\}$.
    \item
        \label{i:W:F-dbl-conj}
        $F^{**} = F$.
    \item
        \label{i:W:F-strictly-conv}
        $F^*$ is a~strictly convex norm.
    \item
        \label{i:W:G-star-inv-G}
        $G^*|{\partial W^*} = (G|{\partial W})^{-1}$.
    \item
        \label{i:W:F-conj-C1}
        $F^*$ is of class~$\cnt{1}$.
    \item
        \label{i:W:F-conj-C11}
        If $F$ satisfies~\eqref{ell2}, then $F^*$ is of
        class~$\cnt{1,1}$ and $G|{\partial W} : \partial W \to \partial W^*$ is
        bilipschitz.
    \item
        \label{i:W:G-n-id}
        $\mathbf{n}(W,x) = G(x) F(\mathbf{n}(W,x))$ and $\mathbf{n}(W^*,y) =
        G^*(y) F(\mathbf{n}(W^*,y))$ for $x \in \partial W$ and $y \in \partial
        W^*$. In~particular, $G(\mathbf{n}(W^*,y)) = y$ for $y \in \partial
        W^*$ and $G^*(\mathbf{n}(W,x)) = x$ for $x \in \partial W$.
    \end{enumerate}
\end{Lemma}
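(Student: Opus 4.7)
The plan is to set up Euler's identity and elementary convexity first, then invoke the classical Legendre--Fenchel correspondence to bootstrap all remaining items. I would begin with~\ref{i:W:F-from-G}, Euler's homogeneity identity: differentiating $F(\lambda x) = \lambda F(x)$ in $\lambda$ at $\lambda = 1$ gives $F(x) = x \bullet G(x)$ for $x \ne 0$, and the same holds for $F^*$ once $F^*$ is known to be differentiable. To prove~\ref{i:W:norm-G=1}, convexity of $F$ combined with Euler yields, for every $u \in \Real{n+1}$ and $x \ne 0$,
\begin{displaymath}
    u \bullet G(x) \le F(u) - F(x) + x \bullet G(x) = F(u),
\end{displaymath}
so $u \bullet G(x) \le 1$ whenever $F(u) \le 1$, while $u = x/F(x)$ achieves equality. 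Hence $F^*(G(x)) = 1$; the symmetric statement $F(G^*(x)) = 1$ follows after~\ref{i:W:F-dbl-conj}, which is the Fenchel--Moreau biconjugate theorem applied to the proper, lower semicontinuous, convex function $F$.

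Items~\ref{i:W:G-lip-homeo} and~\ref{i:W:G-star-inv-G} follow from these. For injectivity in~\ref{i:W:G-lip-homeo}, if $G(x_1) = G(x_2) =: y$ with $x_i \in \partial W$, consider $\phi(t) = F(tx_1 + (1-t)x_2) - y \bullet (tx_1 + (1-t)x_2)$; Euler gives $\phi(0) = \phi(1) = 0$, and the convexity inequality from~\ref{i:W:norm-G=1} forces $\phi \ge 0$, so $\phi \equiv 0$ on $[0,1]$. This makes $F$ affine on $[x_1, x_2]$, contradicting strict convexity of $F$ unless $x_1$ and $x_2$ are linearly dependent; since $G(-x) = -G(x) \ne G(x)$, we conclude $x_1 = x_2$. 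Lipschitz continuity of $G|\partial W$ is immediate from $F \in \cnt{1,1}$, and compactness of $\partial W$ makes the continuous bijection a homeomorphism. For~\ref{i:W:G-star-inv-G}, the unique maximizer in $F^*(y) = \sup\{u \bullet y : F(u) \le 1\}$ at $y = G(x)$ with $x \in \partial W$ is $u = x$; once~\ref{i:W:F-conj-C1} is established, the envelope identity $\grad F^*(y) = \arg\max\{u \bullet y : F(u) \le 1\}$ gives $G^*(G(x)) = x$, and the reverse composition is handled symmetrically.

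Items~\ref{i:W:F-strictly-conv} and~\ref{i:W:F-conj-C1} are the two sides of classical Legendre duality. For~\ref{i:W:F-conj-C1}, strict convexity of $F$ forces uniqueness of the maximizer $u^*(w)$ in $F^*(w) = \sup\{u \bullet w : F(u) \le 1\}$ for every $w \ne 0$; the two-sided estimate $h \bullet u^*(w) \le F^*(w+h) - F^*(w) \le h \bullet u^*(w+h)$, combined with continuity of $u^*$ (a compactness argument from uniqueness), shows $F^* \in \cnt{1}$ with $\grad F^* = u^*$. Dually, differentiability of $F$ implies uniqueness of supporting functionals at points of $\partial W$, which is equivalent to strict convexity of $F^*$ and yields~\ref{i:W:F-strictly-conv}. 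For~\ref{i:W:F-conj-C11}, the ellipticity bound~\eqref{ell2} makes $\Der G(x) = \Der^2 F(x)$ uniformly positive definite on tangent directions to $\partial W$, so $G|\partial W$ is a $\cnt{1}$ diffeomorphism with uniformly bounded inverse; extending by $0$-homogeneity promotes $F^*$ to class~$\cnt{1,1}$ on $\Real{n+1} \setminus \{0\}$ and makes $G|\partial W$ bilipschitz. Finally~\ref{i:W:G-n-id} is a direct computation: $W = \{F < 1\}$ is an $F$-sublevel set, so $G(x) = \grad F(x)$ is parallel to the Euclidean outward normal $\mathbf{n}(W,x)$ at $x \in \partial W$; the stated identity is homogeneity bookkeeping, and the ``in~particular'' consequences follow by composing with $G^*$ and invoking~\ref{i:W:G-star-inv-G}.

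The main obstacle is~\ref{i:W:F-conj-C1}: setting up the envelope computation carefully and proving continuity of the unique maximizer $u^*$ requires a compactness argument mixing strict convexity of $F$ with continuity of $G$, and the singularity of $F^*$ at the origin must be handled separately via $0$-homogeneity rather than any uniform estimate near the origin.
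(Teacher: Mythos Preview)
Your proposal is correct and follows essentially the same convex-duality route as the paper: Euler's identity, the subgradient inequality, uniqueness of maximizers from strict convexity, and the envelope/argmax identification of~$G^*$. The paper differs only in packaging---it proves $F^{**}=F$ and the strict convexity of~$F^*$ by direct computation rather than citing Fenchel--Moreau and the smooth/strictly-convex duality, and it obtains surjectivity of $G|\partial W$ via invariance of domain rather than from the inverse composition $G\circ G^* = \id{\partial W^*}$ that you set up in~\ref{i:W:G-star-inv-G}.

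Two small points to tighten. First, in your treatment of~\ref{i:W:G-lip-homeo} you call $G|\partial W$ a ``continuous bijection'' before surjectivity has been shown; make explicit that surjectivity is deferred to~\ref{i:W:G-star-inv-G} (once $G\circ G^* = \id{\partial W^*}$ is established via Lagrange multipliers on the maximizer). Second, in~\ref{i:W:F-conj-C11} you write that $G|\partial W$ is a ``$\cnt{1}$ diffeomorphism,'' but under the standing hypothesis $F\in\cnt{1,1}$ the map $G=\grad F$ is only Lipschitz, with $\Der G = \Der^2 F$ defined merely almost everywhere; the ellipticity bound~\eqref{ell2} gives a uniform lower bound on this a.e.\ derivative restricted to tangent directions, which makes $(G|\partial W)^{-1}=G^*|\partial W^*$ Lipschitz and hence $F^*\in\cnt{1,1}$. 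Replace ``$\cnt{1}$ diffeomorphism'' with ``bilipschitz homeomorphism whose a.e.\ tangential derivative is uniformly invertible'' and the argument goes through.
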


\begin{proof}
    It is clear from the definition that $F^*$ is a~norm; hence, it is Lipschitz
    and convex. Employ~\cite[Theorem~25.5]{Rockafellar} or the Rademacher
    theorem~\cite[3.1.6]{Federer1969} to see that $F^*$ is differentiable
    $\Leb{n+1}$~almost everywhere. Observe that $G(x) \bullet x = F(x) > 0$ and
    $G^*(y) \bullet y = F^*(y) > 0$ for all $x \in \Real{n+1} \without \{0\}$
    and $y \in \dmn G^*$ due to positive $1$-homogeneity of~$F$ and~$F^*$.

    Assume now that $G|{\partial W}$ is not injective, i.e., that there exist
    $a,b \in \partial W$ such that $a \ne b$ and $G(a) = G(b)$. Since $F(u) =
    F(-u)$ we see that $G(u) = - G(-u)$ so $a \ne -b$ and the line segment
    joining~$a$ and~$b$ does not pass through the origin. Set $u = b-a$ and
    define the strictly convex map $f : [0,1] \to \Real{n+1}$ by the formula
    $f(t) = F(a + tu)$. Then $f'(0) = G(a) \bullet u$ and $f'(1) = G(b) \bullet
    u$ so $f'(0) = f'(1)$ which contradicts strict convexity of~$f$. Therefore
    $G|{\partial W}$ is injective; hence, since $\partial W$ is compact,
    $(G|{\partial W})^{-1} : G[\partial W] \to \partial W$ is continuous.

    For any $w \in \Real{n+1}$ define $g_w : \Real{n+1} \to \Real{}$ by the
    formula $g_w(u) = u \bullet w$ for $u \in \Real{n+1}$.

    Let $w \in \partial W^*$ so that $F^*(w) = 1$. Since the smooth map $g_w$
    attains its maximal value on the compact manifold~$\partial W = F^{-1}\{1\}$
    of class~$\cnt{1,1}$, using the method of Lagrange multipliers, we deduce
    that there exists at least one~$u \in \partial W$ such that $1 = F^*(w) = w
    \bullet u$ and $w = \lambda G(u)$ for some $\lambda \in \Real{}$. Then
    \begin{displaymath}
        1 = u \bullet w
        = u \bullet \lambda G(u)
        = \lambda \langle u ,\, \Der F(u) \rangle
        = \lambda F(u)
        = \lambda \,.
    \end{displaymath} 
    Therefore,
    \begin{displaymath}
        F^*(G(u)) = \lambda F^*(w) = \lambda = 1 \,.
    \end{displaymath}
    Since $w \in \partial W$ was arbitrary, it follows that $\partial W^*
    \subseteq G[\partial W]$. Noting that $\partial W$ and $\partial W^*$ are
    compact connected submanifolds of~$\Real{n+1}$ without boundary of class at
    least~$\cnt{0,1}$ and that $G|{\partial W}$ is a~homeomorphism onto its
    image, employing the invariance of domain theorem, we get $G[\partial W] =
    \partial W^*$.

    For $u \in \partial W$ we have $F^{**}(u) = \sup \{ u \bullet w : F^*(w) = 1
    \} \le 1$, $u \bullet G(u) = F(u) = 1$, and $F^*(G(u)) = 1$, so $F^{**} =
    F$. If~$F(u) = 1$ and $F(u) = F^{**}(u) = u \bullet w$ for some $w \in
    \partial W^* \cap \dmn G^*$, then $u = \lambda G^*(w)$ for some $\lambda \in
    \Real{}$ because the function $g_u$ attains its maximum at~$w$. Hence, the
    same argument as before shows that $F(G^*(w)) = 1$ for all $w \in \partial
    W^* \cap \dmn G^*$.

    If $F^*(y) = 1$, then there exists exactly one $w = (G|{\partial
      W})^{-1}(y) \in \partial W$ for which $F^*(y) = w \bullet y$. If~$y \in
    \partial W^* \cap \dmn G^*$, then we know also that $F^*(y) = G^*(y) \bullet
    y$ and $G^*(y) \in \partial W$; hence, $G^*(y) = (G|{\partial W})^{-1}(y)$.
    Employing~\cite[4.7]{Federer1959} we see that $G^*|{\partial W^*} =
    (G|{\partial W})^{-1}$ and, since $(G|{\partial W})^{-1}$ is continuous,
    $F^*|{\Real{n+1} \without \{0\}}$ is of class~$\cnt{1}$.

    For $u,v \in \Real{n+1} \without \{0\}$ we have
    \begin{multline}
        F^*(u+v) = (u+v) \bullet G^*(u+v)
        = u \bullet G^*(u+v) + v \bullet G^*(u+v)
        \\
        < u \bullet G^*(u) + v \bullet G^*(v)
        = F^*(u) + F^*(v) \,;
    \end{multline}
    because $G^*(u) \ne G^*(v)$, $G^*(u)$ is the unique element of $\partial W$
    which realises $\sup\{ u \bullet w : F(w) = 1 \}$, and $G^*(v)$ is the
    unique element of $\partial W$ which realises $\sup\{ v \bullet w : F(w) = 1
    \}$; hence, $F^*$ is a strictly convex norm.

    Observe that if $x \in \partial W$, $y = G(x)$, $\Der G(x)$ exists, and $v,w
    \in \Tan(\partial W^*, x)$, then
    \begin{displaymath}
        \langle (v,w) ,\, \Der^2F^*(y) \rangle
        = \Der G^*(y)v \bullet w
        = (\Der G(x)|{\Tan(\partial W, x)})^{-1}v \bullet w \,.
    \end{displaymath}
    If $F$ satisfies~\eqref{ell2}, then the right-hand side is bounded and $F^*$
    is of class~$\cnt{1,1}$.

    Finally, note that for $x \in \partial W$ we have $G(x) = \grad F(x) \perp
    \Tan(\partial W,x)$, $G(x) \bullet x = F(x) = 1$, and $F^*(G(x)) = 1$;
    hence, $G(x) = \mathbf{n}(W,x) / F^*(\mathbf{n}(W,x))$. Similarly, $G^*(y) =
    \mathbf{n}(W^*,y) / F(\mathbf{n}(W^*,y))$ whenever $F^*(y) = 1$.
\end{proof}

\begin{Corollary}
    \label{cor:wulff-shape-princ-curvatures}
    Assume $F$ is an elliptic integrand of class~$\cnt{1,1}$, $r \in \Real{}$ is
    positive, $W = \oballF{F^*}0r$, $\eta : \partial W \to \Real{n+1}$ is given
    by $\eta(z) = \grad F(\mathbf{n}(W,z))$ for $z \in \partial W$. We have
    $\eta(z) = z/r$ for $z \in \partial W$ so $\Der \eta(y)v = v/r$ for $v \in
    \Tan(\partial W,y)$ and $y \in \dmn \Der \eta$; hence,
    recalling~\ref{def:F-princ-curv} and~\ref{rem:F-princ-curc-and-mc} we see
    that
    \begin{displaymath}
        \kappa_{\partial W,1}^F(y,\eta(y)) = \ldots = \kappa_{\partial W,n}^F(y,\eta(y)) = 1/r
        \quad \text{for $y \in \dmn \Der \eta$} \,.
    \end{displaymath}
\end{Corollary}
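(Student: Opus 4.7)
The plan is to reduce the statement to the identity $\eta(z) = z/r$ on $\partial W$, after which the derivative along the tangent space becomes the scalar $1/r$ times the identity on the tangent space and the principal curvatures are read off immediately from Definition~\ref{def:F-princ-curv}.

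First I would handle the scaling: the Wulff shape $W = \oballF{F^*}{0}{r}$ is simply $r\,W_1$, where $W_1 = \oballF{F^*}{0}{1}$. Since outer unit normals are preserved by positive dilations, we have $\mathbf{n}(W,z) = \mathbf{n}(W_1, z/r)$ for every $z \in \partial W$. This reduces the computation of $\eta(z) = \grad F(\mathbf{n}(W,z))$ to the unit-radius case, where Lemma~\ref{lem:Wulff-as-image}\ref{i:W:G-n-id}---applied with the lemma's $W^*$ playing the role of the present $W_1$---gives $\grad F(\mathbf{n}(W_1,y)) = y$ for $y \in \partial W_1$. Combining the two observations yields $\eta(z) = z/r$ for $z \in \partial W$.

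Next I would differentiate: because $z \mapsto z/r$ is linear, its restriction to $\partial W$ differentiates to the map $v \mapsto v/r$ on $\Tan(\partial W,y)$ at every $y \in \dmn \Der \eta$. To fit this into Definition~\ref{def:F-princ-curv} I need a Lipschitz unit normal $\nu$ of $\mathbf{n}(W,\cdot)$ defined on an open neighborhood of $\partial W$. The natural choice
\[
    \nu(z) = \frac{\grad F^*(z)}{|\grad F^*(z)|}
\]
on an annular neighborhood of $\partial W$ is Lipschitz by Lemma~\ref{lem:Wulff-as-image}\ref{i:W:F-conj-C11} and agrees with $\mathbf{n}(W,\cdot)$ on $\partial W$, so $\grad F \circ \nu = \eta$ on $\partial W$. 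Hence $\Der(\grad F \circ \nu)(y)|\Tan(\partial W,y)$ equals $(1/r)\,\id{\Tan(\partial W,y)}$, whose sole eigenvalue is $1/r$ with multiplicity $n$; Definition~\ref{def:F-princ-curv} then gives the asserted equalities of the $F$-principal curvatures.

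No step constitutes a real obstacle: the corollary is essentially a direct consequence of Lemma~\ref{lem:Wulff-as-image}\ref{i:W:G-n-id}. The only technicality worth pausing on is producing a Lipschitz ambient extension of the unit normal so that Definition~\ref{def:F-princ-curv} applies verbatim, and this is handled cleanly by using $\grad F^*/|\grad F^*|$ once one knows from Lemma~\ref{lem:Wulff-as-image}\ref{i:W:F-conj-C11} that $F^*$ is of class $\cnt{1,1}$ whenever $F$ is.
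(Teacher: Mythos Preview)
Your proposal is correct and follows the same route as the paper: the Corollary's proof is essentially contained in its own statement, relying on Lemma~\ref{lem:Wulff-as-image}\ref{i:W:G-n-id} (together with the obvious scaling) to get $\eta(z)=z/r$, then differentiating and invoking Definition~\ref{def:F-princ-curv}. Your explicit construction of the Lipschitz ambient extension $\nu = \grad F^{*}/|\grad F^{*}|$ is a welcome bit of extra care that the paper leaves implicit, but it does not constitute a different approach.
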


\begin{Lemma}
    \label{lem:xi-diff-ae}
    Let $F : \Real{n+1} \to \Real{}$ be a~strictly convex norm of
    class~$\cnt{1,1}$, $G = \grad F$, $A \subseteq \Real{n+1}$ be closed. Then
    \begin{enumerate}
    \item
        \label{i:xi:delta-1-lip}
        $| \bm{\delta}^F_A(y) - \bm{\delta}^F_A(z)| \le F^*(y-z)$ for $y,z \in \Real{n+1}$.
    \item
        \label{i:xi:xi-cont}
        $\bm{\xi}^F_A$ is continuous.
    \item
        \label{i:xi:delta-on-fiber}
        Suppose $x \in \Real{n+1} \without A$ and $a \in A$ are such that
        $\bm{\delta}^F_A(x) = F^*(x-a)$. Then
        \begin{displaymath}
            \bm{\delta}^F_A(a + t(x-a)) = t F^*(x-a) = t \bm{\delta}^F_A(x)
            \quad \text{for $0 < t \le 1$} \,.
        \end{displaymath}
    \item
        \label{i:xi:xi-by-grad-delta}
        Suppose $x \in \Real{n+1} \without A$ and $a \in A$ are such that
        $\bm{\delta}^F_A(x) = F^*(x-a)$ and $\Der \bm{\delta}^F_A(x)$ exists.
        Then $x \in U$ and $G(\grad \bm{\delta}^F_A(x))\bm{\delta}^F_A(x) =
        x-a$; hence,
        \begin{displaymath}
            \bm{\xi}^F_A(x) = x - G(\grad \bm{\delta}^F_A(x))\bm{\delta}^F_A(x) = a \,.
        \end{displaymath}
    \item 
        \label{i:xi:delta-sqr-C1}
        The maps $\bm{\delta}^F_A|\Int\bigl( \dmn \bm{\xi}^F_A \without A
        \bigr)$ and $(\bm{\delta}^F_A)^2|\Int\bigl( \dmn \bm{\xi}^F_A \bigr)$
        are continuously differentiable and
        \begin{displaymath}
            \bigl\langle u ,\, \Der (\bm{\delta}^F_A)^2(y) \bigr\rangle
            = \bigl\langle u ,\, \Der (F^*)^2(y - \bm{\xi}^F_A(y)) \bigr\rangle
            \quad \text{for $y \in \Int\bigl( \dmn \bm{\xi}^F_A \bigr)$ and $u \in \Real{n+1}$} \,.
        \end{displaymath}
    \item
        \label{i:xi:dmn-xi-ae}
        $\Leb{n+1}(\Real{n+1} \without \dmn \bm{\xi}^F_A) = 0$.
    \item 
        \label{i:xi:delta-xi}
        Assume $a \in A$, $u \in \partial \cballF{F^*}01$, $t > 0$,
        and~$\bm{\delta}^F_A(a + tu) = t$. Then $a+su \in \dmn \bm{\xi}^F_A$ and
        $\bm{\xi}^F_A(a+su) = a$ for all $0 < s < t$. In~particular,
        \begin{displaymath}
            \{ s  : \bm{\xi}^F_A(a+su) = a \}
            \subseteq \{ s : \bm{\delta}^F_A(a+su) = s \}
            = \Clos{ \{ s : \bm{\xi}^F_A(a+su) = a \} } \,.
        \end{displaymath}
    \item 
        \label{i:xi:direction}
        Assume $a \in A$, $x \in \Real{n+1}$, and $\bm{\delta}^F_A(x) =
        F^*(x-a)$. Then
        \begin{displaymath}
            x-a \in G ( \Nor(A,a) ) \,.
        \end{displaymath}
        In~particular, if $\mathbf{n}(A,a) \ne 0$, then
        \begin{displaymath}
            \mathbf{n}^{F}(A,a) = \bm{\nu}^F_A(x) = \frac{x-a}{F^*(x-a)} \,.
        \end{displaymath}
    \end{enumerate}
\end{Lemma}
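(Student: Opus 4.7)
Parts~(a)--(c) are formal consequences of the triangle inequality for~$F^*$ combined with the uniqueness built into~$\bm{\xi}^F_A$. For~(a) I take the infimum of $F^*(y-a) \le F^*(y-z) + F^*(z-a)$ over $a \in A$; for~(b), continuity of~$\bm{\delta}^F_A$ (from~(a)) makes $\bm{\xi}^F_A(y_n)$ bounded as $y_n \to y \in \dmn\bm{\xi}^F_A$, any subsequential limit realises~$\bm{\delta}^F_A(y)$, and uniqueness at~$y$ forces the whole sequence to converge; for~(c) the upper bound is immediate, while any strict improvement would yield, via the triangle inequality, a point of~$A$ closer in~$F^*$ to~$x$ than~$a$, contradicting minimality.

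For~(d), I exploit that $\bm{\delta}^F_A \le F^*(\cdot - a)$ on~$\Real{n+1}$ with equality at~$x$, and that $F^*(\cdot - a)$ is differentiable at~$x$ since $x \ne a$ (Lemma~\ref{lem:Wulff-as-image}\ref{i:W:F-conj-C1}); equating derivatives at~$x$ forces $\grad\bm{\delta}^F_A(x) = G^*(x-a)$. Lemma~\ref{lem:Wulff-as-image}\ref{i:W:G-star-inv-G}, extended by the $0$-homogeneity of $G$ and~$G^*$, gives $G(G^*(x-a)) = (x-a)/F^*(x-a)$; multiplying by $\bm{\delta}^F_A(x) = F^*(x-a)$ yields the stated identity. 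Any alternative nearest point $a'$ would obey the same formula, forcing $a' = a$ and $x \in \dmn\bm{\xi}^F_A$.

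Item~(f) follows from~(d) and Rademacher's theorem applied to the Lipschitz function~$\bm{\delta}^F_A$: at $\Leb{n+1}$-almost every $x \notin A$, $\bm{\delta}^F_A$ is classically differentiable and (d)~pins down the unique nearest point, while $A \subseteq \dmn\bm{\xi}^F_A$ is trivial. This feeds into the main analytic step, which is~(e). Combining (d) with~(f), one has $\grad\bm{\delta}^F_A(x) = G^*(x - \bm{\xi}^F_A(x))$ at $\Leb{n+1}$-almost every $x \in \Int(\dmn\bm{\xi}^F_A) \without A$, and the right-hand side is continuous on that open set by~(b) and the $\cnt{1}$ regularity of~$F^*$. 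The standard fact that a~Lipschitz function whose almost-everywhere gradient coincides with a~continuous vector field is of class~$\cnt{1}$ with exactly that gradient---proved by integration along lines---then yields $\bm{\delta}^F_A \in \cnt{1}(\Int(\dmn\bm{\xi}^F_A \without A))$. Combining this with the chain rule and the identity $\Der(F^*)^2(v) = 2 F^*(v) G^*(v)$ produces the formula for $\Der(\bm{\delta}^F_A)^2$ stated in~(e); this formula extends continuously to zero on~$A$ (since $F^* \to 0$ there), giving $(\bm{\delta}^F_A)^2 \in \cnt{1}(\Int(\dmn\bm{\xi}^F_A))$. The only delicate point here is the ``continuous a.e.\ gradient implies $\cnt{1}$'' passage, but it is entirely standard.

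For~(g), applying~(c) with $x = a+tu$ gives $\bm{\delta}^F_A(a+su) = s$ for $0 < s \le t$; any second foot $a' \ne a$ at level $0 < s < t$ would saturate the triangle inequality $F^*(a+tu - a') \le F^*((t-s)u) + F^*(a+su - a') = t$, and strict convexity of~$F^*$ (Lemma~\ref{lem:Wulff-as-image}\ref{i:W:F-strictly-conv}) then forces $a+su-a'$ to be a scalar multiple of~$u$, which a brief case analysis excludes. The set identity follows from continuity of~$\bm{\delta}^F_A$ together with the observation that $\{s > 0 : \bm{\xi}^F_A(a+su) = a\}$ is dense in the closed interval $\{s > 0 : \bm{\delta}^F_A(a+su) = s\}$. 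For~(h), applying the first-order Taylor expansion of the~$\cnt{1}$ function~$(F^*)^2$ (noting $\Der(F^*)^2(v) = 2 F^*(v) G^*(v)$) to the inequality $(F^*(x-a'))^2 \ge (F^*(x-a))^2$ for $a' \in A$ near~$a$ shows that every $u \in \Tan(A, a)$ satisfies $u \bullet G^*(x-a) \le 0$, so $G^*(x-a) \in \Nor(A, a)$; Lemma~\ref{lem:Wulff-as-image}\ref{i:W:G-star-inv-G} then gives $F^*(x-a) \cdot G(G^*(x-a)) = x-a$, which is the required inclusion. The ``in particular'' assertion follows since the density conditions defining $\mathbf{n}(A, a)$ combined with the containment of~$A$ in the complement of the Wulff shape $\oballF{F^*}{x}{F^*(x-a)}$---whose inner Euclidean unit normal at~$a$ is a positive multiple of~$G^*(x-a)$---force $\mathbf{n}(A, a) = G^*(x-a)/|G^*(x-a)|$, whence $\mathbf{n}^F(A, a) = G(G^*(x-a)) = \bm{\nu}^F_A(x)$.
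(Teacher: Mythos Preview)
Your proof is correct and follows essentially the same strategy as the paper, which explicitly mimics Federer's proof of~\cite[4.8]{Federer1959}. A few parts are handled by slightly different but equally valid arguments: for~(d) you compare $\bm{\delta}^F_A$ directly with the differentiable majorant $F^*(\cdot-a)$ to read off $\grad\bm{\delta}^F_A(x)=G^*(x-a)$, whereas the paper computes the directional derivative along the segment and invokes the uniqueness in $F^{**}=F$; for~(e) you argue via ``Lipschitz with continuous a.e.\ gradient implies $\cnt{1}$'' while the paper defers to the competitor argument of~\cite[4.8(5)]{Federer1959}; and for~(h) you obtain $G^*(x-a)\in\Nor(A,a)$ from a first-order expansion of $(F^*)^2(x-\cdot)$, while the paper argues geometrically via the tangent cone and the normal to the touching Wulff ball. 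These are cosmetic differences rather than genuinely different routes, and each variant is sound.
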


\begin{proof}
    We mimic parts of the proof of~\cite[4.8]{Federer1959}. We set $G^* = \grad F^*$.
    
    Let $y,z \in \Real{n+1}$, then
    \begin{displaymath}
        \bm{\delta}^F_A(y) \le \bm{\delta}^F_A(z) + F^*(y-z)
        \quad \text{and} \quad
        \bm{\delta}^F_A(z) \le \bm{\delta}^F_A(y) + F^*(y-z) \,;
    \end{displaymath}
    hence, claim~\ref{i:xi:delta-1-lip} follows.

    Assume that~\ref{i:xi:xi-cont} does not hold. Then there are $y_i \in \dmn
    \bm{\xi}^F_A$ for $i \in \nat$ and $\varepsilon > 0$ such that $\lim_{i \to
      \infty} y_i = y \in \dmn \bm{\xi}^F_A$ but $F^*(\bm{\xi}^F_A(y_i) -
    \bm{\xi}^F_A(y)) > \varepsilon$. Using~\ref{i:xi:delta-1-lip} we get
    \begin{displaymath}
        F^*(\bm{\xi}^F_A(y_i) - y) \le \bm{\delta}^F_A(y) + 2 F^*(y_i - y)
        \quad \text{for $i \in \nat$} \,;
    \end{displaymath}
    hence, the set $\{ \bm{\xi}^F_A(y_i) : i \in \nat \}$ is a~bounded subset of
    the closed set~$A$ and we may assume that $\lim_{i \to \infty}
    \bm{\xi}^F_A(y_i) = z \in A$. Then
    \begin{displaymath}
        \bm{\delta}^F_A(y)
        = \lim_{i \to \infty} \bm{\delta}^F_A(y_i)
        = \lim_{i \to \infty}F^*(\bm{\xi}^F_A(y_i)-y_i)
        = F^*(z-y) \,;
    \end{displaymath}
    hence, $\bm{\xi}^F_A(y) = z$ which is incompatible with
    \begin{displaymath}
        F^*(z - \bm{\xi}^F_A(y)) = \lim_{i \to \infty}F^*(\bm{\xi}^F_A(y_i) - \bm{\xi}^F_A(y)) \ge \varepsilon \,.
    \end{displaymath}
    
    Assume \ref{i:xi:delta-on-fiber} does not hold. Then there are $0 < t < 1$
    and $b \in A$ such that setting $y = a + t(x-a)$ we get $F^*(y-b) <
    F^*(y-a)$ and
    \begin{displaymath}
        F^*(x-a) \le F^*(x-b) \le F^*(x-y) + F^*(y-b) < F^*(x-y) + F^*(y-a) = F^*(x-a)
    \end{displaymath}
    a~contradiction.

    Now we prove~\ref{i:xi:xi-by-grad-delta}. We have
    \begin{displaymath}
        \bm{\delta}^F_A(x + t(a-x)) = \bm{\delta}^F_A(x) - t \bm{\delta}^F_A(x)
        \quad \text{for $0 < t < 1$} \,,
    \end{displaymath}
    which implies
    \begin{equation}
        \label{eq:xi:grad-delta}
        \grad \bm{\delta}^F_A(x) \bullet \frac{x-a}{\bm{\delta}^F_A(x)} = \frac{\Der \bm{\delta}^F_A(x)(a-x)}{-\bm{\delta}^F_A(x)} = 1 \,.
    \end{equation}
    From~\eqref{eq:xi:grad-delta} and~\ref{i:xi:delta-1-lip} we conclude
    using~\ref{lem:Wulff-as-image}\ref{i:W:F-dbl-conj}\ref{i:W:F-from-G}
    \begin{multline}
        1 = \sup \{ \Der \bm{\delta}^F_A(x) u : u \in \Real{n+1} ,\, F^*(u) \le 1 \}
        \\
        = \sup \{ \grad \bm{\delta}^F_A(x) \bullet u : u \in \Real{n+1} ,\, F^*(u) \le 1 \}
        \\
        = F^{**}(\grad \bm{\delta}^F_A(x))
        = F(\grad \bm{\delta}^F_A(x))
        = \grad \bm{\delta}^F_A(x) \bullet G(\grad \bm{\delta}^F_A(x)) \,.
    \end{multline}
    However, due to~\ref{lem:Wulff-as-image}\ref{i:W:G-lip-homeo} there is
    exactly one $w \in \Real{n+1}$ with $F^*(w) = 1$ such that $\grad
    \bm{\delta}^F_A(x) \bullet w = F(\grad \bm{\delta}^F_A(x)) = 1$; thus,
    \begin{displaymath}
        G(\grad \bm{\delta}^F_A(x)) = \frac{x-a}{\bm{\delta}^F_A(x)} \,.
    \end{displaymath}

    The formula for $\Der (\bm{\delta}^F_A)^2$ postulated
    in~\ref{i:xi:delta-sqr-C1} is proven exactly as
    in~\cite[4.8(5)]{Federer1959} noting
    \begin{displaymath}
        F^*(\tfrac{x-a}{\bm{\delta}^F_A(x)}) = 1 \,,
        \quad
        \grad \bm{\delta}^F_A(x) = G^*(\tfrac{x-a}{\bm{\delta}^F_A(x)}) \,,
        \quad
        \langle u ,\, \Der (F^*)^2(y) \rangle = 2 F^*(y) G^*(y) \bullet u \,.
    \end{displaymath}
    
    Continuity of the derivatives of~$\bm{\delta}^F_A|\Real{n+1} \without A$
    and~$(\bm{\delta}^F_A)^2$ follows from the formulas and a~reasoning
    completely analogous to the proof of~\cite[4.8(5)]{Federer1959}.

    Item~\ref{i:xi:dmn-xi-ae} is now a~consequence of the~Rademacher
    theorem~\cite[3.1.6]{Federer1969}.

    For the proof of~\ref{i:xi:delta-xi} recall~\ref{i:xi:delta-on-fiber} and
    assume to the contrary, that there exist $0 < s < t$ and $b \in A$, $b \ne
    a$ such that $s = F^*(a + su - a) = F^*(a + su - b) =
    \bm{\delta}^F_A(a+su)$. Set $p = a + su$ and $q = a + tu$. Clearly $b \ne p
    + su$ since otherwise $t = \bm{\delta}^F_A(q) \le F^*(q - b) = F^*(a + tu -
    (a+2su)) = t - 2s < t$ which is impossible. Therefore, $q-a$ and $q-b$ are
    linearly independent and,
    using~\ref{lem:Wulff-as-image}\ref{i:W:F-strictly-conv}, we obtain
    the~contradictory estimate
    \begin{displaymath}
        t \le F^*(q-b) < F*(q-p) + F^*(p-b) = t-s + s = t \,.
    \end{displaymath}

    To prove~\ref{i:xi:direction} we observe that
    \begin{displaymath}
        \oballF{F^*}{x}{F^*(x-a)} \cap A = \varnothing \,;
        \quad \text{hence,} \quad
        -\mathbf{n}(\cballF{F^*}{x}{F^*(x-a)},a) \in \Nor(A,a) \,.
    \end{displaymath}
    Indeed, otherwise there would exist $v \in \Tan(A,a)$ such that $v \bullet
    \mathbf{n}(\cballF{F^*}{x}{F^*(x-a)},a) < 0$ so there would be points $y_i
    \in A$ such that $|y_i - a| \to 0$ and $(y_i - a)/|y_i - a| \to v$ as $i \to
    \infty$ and then, since $F^*$ is of class~$\cnt{1}$, we could find $i \in
    \nat$ for which $y_i \in \oballF{F^*}{x}{F^*(x-a)} \cap A$ and this cannot
    happen. Employing~\ref{lem:Wulff-as-image}\ref{i:W:G-n-id} we see that
    \begin{displaymath}
        G( -\mathbf{n}(\cballF{F^*}{x}{F^*(x-a)},a) ) = \frac{x-a}{F^*(x-a)} \in G(\Nor(A,a)) \,.
        \qedhere
    \end{displaymath}
\end{proof}

\section{Totally umbilical hypersurfaces}\label{tota}

In~\ref{cor:wulff-shape-princ-curvatures} we proved that $\partial
\cballF{F^*}0r$ has all $F$-principal curvatures equal to~$1/r$. In~this section
we show that this condition actually characterises the manifold $\partial
\cballF{F^*}0r$.

\begin{Lemma}
    \label{lem:kappa-constant}
    Suppose $M$ is a~connected submanifold of~$\Real{n+1}$ of class~$\cnt{1,1}$ of
    dimensions~$n$, $\eta : M \to \Real{n+1}$ is Lipschitz, and $\kappa : M \to
    \Real{}$ is such that
    \begin{displaymath}
        \Der \eta(z)(u) = \kappa(z) u
        \quad \text{for $\Haus{n}$ almost all $z \in M$ and all $u \in \Tan(M,z)$} \,.
    \end{displaymath}
    Then $\kappa$ is a~constant function.
\end{Lemma}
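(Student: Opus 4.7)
The plan is to use a local $\cnt{1,1}$ graph representation of $M$ to convert the hypothesis into a first-order differential system for the pullback of $\eta$, deduce that $\kappa$ is a.e.\ constant on each chart, and then globalize via connectedness of $M$.

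Fix $z_0 \in M$, set $T = \Tan(M,z_0)$, and use the $\cnt{1,1}$ regularity of $M$ to write a neighborhood of $z_0$ in $M$ as the image of $\phi(x) = z_0 + x + f(x)$ for $x$ in an open product neighborhood $U$ of~$0$ in~$T$, where $f : U \to T^{\perp}$ is of class $\cnt{1,1}$ with $f(0) = 0$ and $\Der f(0) = 0$. Set $\tilde\eta = \eta \circ \phi$ and $\tilde\kappa = \kappa \circ \phi$; since $\eta$ is Lipschitz, so is $\tilde\eta$. At $\Leb{n}$-a.e.\ $x \in U$ both $f$ and $\tilde\eta$ are differentiable and the hypothesis applies at $\phi(x)$, so the chain rule yields
\[
    \Der\tilde\eta(x) v = \Der\eta(\phi(x))\bigl(v + \Der f(x) v\bigr) = \tilde\kappa(x)\bigl(v + \Der f(x) v\bigr) \quad \text{for every $v \in T$} \,.
\]

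Choose an orthonormal basis $e_1,\ldots,e_n$ of~$T$ and decompose $\tilde\eta = \sum_{i=1}^{n} \tilde\eta_i e_i + \tilde\eta^{\perp}$ along $T \oplus T^{\perp}$. Projecting the displayed equation onto~$T$ gives $\partial_j \tilde\eta_i(x) = \tilde\kappa(x) \delta_{ij}$ a.e.\ on~$U$ for $i,j \in \{1,\ldots,n\}$. The off-diagonal relation $\partial_j \tilde\eta_i = 0$ a.e.\ for $i \ne j$, together with the fact that a Lipschitz function is absolutely continuous on almost every coordinate line, implies via Fubini's theorem and continuity that each $\tilde\eta_i$ depends only on $x_i$; write $\tilde\eta_i(x) = g_i(x_i)$ for a Lipschitz function $g_i$. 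The diagonal equations then read $g_i'(x_i) = \tilde\kappa(x)$ a.e.\ for every $i \in \{1,\ldots,n\}$. When $n \ge 2$, the equality $g_1'(x_1) = g_2'(x_2)$ a.e.\ on the product $U$ forces both sides to be a.e.\ equal to a common constant; hence $\tilde\kappa$ is a.e.\ equal to a single constant $c(z_0)$ on~$U$.

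To finish, observe that this local statement globalizes by connectedness: for each $c \in \Real{}$ the set of points $z \in M$ admitting a neighborhood on which $\kappa$ is a.e.\ equal to~$c$ is open in~$M$, and these open sets partition~$M$, so connectedness forces exactly one of them to be nonempty. The principal technical point, and the step most in need of care, is the separation-of-variables argument: rigorously passing from $\partial_j \tilde\eta_i = 0$ a.e.\ to independence of $\tilde\eta_i$ from $x_j$ relies on the ACL characterization of Lipschitz functions together with continuity of $\tilde\eta_i$ to upgrade an a.e.\ statement to a pointwise one. Note that the argument genuinely uses $n \ge 2$: in dimension one the condition $\Der\eta(z) u = \kappa(z) u$ imposes no compatibility between values of~$\kappa$ at distinct points, so the reasoning specifically exploits the overdetermination provided by having at least two coordinate directions on~$U$.
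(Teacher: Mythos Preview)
Your proof is correct and follows essentially the same route as the paper's: both localize via a $\cnt{1,1}$ graph, project the pulled-back map onto the tangent space to obtain $\partial_j \tilde\eta_i = \tilde\kappa\,\delta_{ij}$ a.e., deduce that each $\tilde\eta_i$ depends only on $x_i$, and then use the cross-relation $g_i'(x_i) = g_j'(x_j)$ to force constancy. Your explicit mention of the ACL property for the separation-of-variables step and of the genuine need for $n \ge 2$ (which the paper's argument also tacitly requires) is a good touch.
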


\begin{proof}
    Since $M$ is connected it suffices to show the claim only locally. Let $a
    \in M$. We~represent~$M$ near~$a$ as the~graph of some
    $\cnt{1,1}$~function~$f$, i.e., we find $p \in \orthproj{n+1}{n}$, $q \in
    \orthproj{n+1}{1}$, $U \subseteq \Real{n}$ an open ball centred at~$p(a)$,
    and $f : U \to \Real{}$ of class~$\cnt{1,1}$ such that, setting $L = p^* +
    q^* \circ f$, there holds
    \begin{displaymath}
        a \in L[U] \subseteq M
        \quad \text{and} \quad
        q \circ p^* = 0 \,.
    \end{displaymath}
    For each $v \in \Real{n}$ we define
    \begin{displaymath}
        \gamma_v : U \to \Real{}
        \quad \text{by} \quad
        \gamma_v(x) = \eta(L(x)) \bullet v \,.
    \end{displaymath}
    Then
    \begin{multline}
        \Der \gamma_v(x)u
        = \Der \eta(L(x)) ( \Der L(x) u ) \bullet v
        = \kappa(L(x)) (\Der L(x) u) \bullet v
        \\
        = \kappa(L(x)) ( p^*(u) + q^*(\Der f(x)u) ) \bullet v
        = \kappa(L(x)) ( u \bullet p(v) + \Der f(x)u \bullet q(v) )
        \\
        \quad \text{for $\Leb{n}$~almost all $x \in U$, $u \in \Real{n}$, $v \in \Real{n+1}$} \,.
    \end{multline}
    Now, choose an orthonormal basis $e_1,\ldots,e_{n}$ of $\Real{n}$ and set
    $\gamma_i = \gamma_{p^*(e_i)}$ for $i = 1,2,\ldots,n$. Since $q \circ p^* =
    0$ and $p \circ p^* = \mathbf{1}_{\Real{n}}$, we obtain
    \begin{multline}
        \label{eq:Der-gamma-i}
        \Der \gamma_{i}(x) e_j = \kappa(F(x)) ( e_i \bullet e_j ) = 0
        \quad \text{and} \quad
        \Der \gamma_{i}(x) e_i = \kappa(F(x))
        \\
        \quad \text{for $\Leb{n}$~almost all $x \in U$, $i,j \in \{1,2,\ldots,n\}$, and $i \ne j$} \,.
    \end{multline}
    Recall that~$U$ is~an~open ball centred at~$p(a)$. Define $J = \{ (x - p(a))
    \bullet e_1 : x \in U \}$. Since~$\eta$ is~Lipschitz we see that $\gamma_1,
    \ldots, \gamma_{n}$ are~absolutely continuous and deduce
    from~\eqref{eq:Der-gamma-i} that there exist Lipschitz functions
    $a_1,\ldots,a_{n} : J \to \Real{}$ such that
    \begin{multline}
        \gamma_i(x) = a_i( (x - p(a)) \bullet e_i )
        \\ \text{and} \quad
        a_i'( (x - p(a)) \bullet e_i ) = a_j'( (x - p(a)) \bullet e_j ) = \kappa(F(x))
        \\
        \text{for $\Leb{n}$ almost all $x \in U$, $i,j \in \{1,2,\ldots,n\}$} \,.
    \end{multline}
    It follows that $a_i'$ is a~constant function for $i = 1,2,\ldots,n$; hence,
    $\kappa$~is also constant.
\end{proof}

\begin{Lemma}
    \label{lem:umbilical}
    Suppose $F$ is an elliptic integrand of class~$\cnt{1,1}$, $M$~is
    a~connected $n$-dimensional submanifold of~$\Real{n+1}$ of class~$\cnt{1,1}$
    satisfying $\Clos{M} \without M = \varnothing$, $\nu : M \to \Real{n+1}$ is
    Lipschitz and such that $\nu(z) \in \Nor(M,z)$ and $|\nu(z)| = 1$, $\eta : M
    \to \Real{n+1}$ is defined by~$\eta(y) = \grad F( \nu(y) )$, and there
    exists a~scalar function $\kappa : M \to \Real{}$ such that
    \begin{displaymath}
        \Der \eta(y) u = \kappa(y) u
        \quad \text{for $\Haus{n}$ almost all $y \in M$ and all $u \in \Tan(M,y)$} \,.
    \end{displaymath}
    Then there exists $\lambda \in \Real{}$ such that $\kappa(y) = \lambda$ for
    $y \in M$ and either $\lambda = 0$ and $M$ is a~hyperplane in~$\Real{n+1}$
    or $\lambda \ne 0$ and $M = \partial \cballF{F}{a}{|\lambda|^{-1}}$ for some
    $a \in \Real{n+1}$.
\end{Lemma}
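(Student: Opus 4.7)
The plan is first to invoke Lemma~\ref{lem:kappa-constant} as a black box: its hypotheses hold verbatim with the Lipschitz map~$\eta$ playing the role of the ``$\eta$'' there, and the conclusion is that $\kappa$ takes a single constant value~$\lambda$ on~$M$. The whole remaining task is then to identify~$M$ from~$\lambda$, and the argument splits naturally according to whether $\lambda$ vanishes.

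\textbf{Case $\lambda = 0$.} Here $\Der \eta(y)|\Tan(M,y) = 0$ for $\Haus{n}$-a.e.~$y \in M$. Working in local $\cnt{1,1}$ charts of~$M$, one reduces matters to the following standard fact: a Lipschitz function on a connected open subset of~$\Real{n}$ whose classical derivative vanishes $\Leb{n}$-a.e.\ must be constant. Hence $\eta$ is locally, and thence globally, constant on the connected manifold~$M$. By Lemma~\ref{lem:Wulff-as-image}\ref{i:W:G-lip-homeo}, $\grad F$ restricted to the $F$-unit sphere is a homeomorphism onto the $F^*$-unit sphere, so $\nu$ must also be constant, say $\nu \equiv \nu_0$. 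Along any Lipschitz path~$\gamma$ in~$M$ one has $(\nu_0 \bullet \gamma)' = 0$ since $\gamma' \in \Tan(M,\cdot) \subseteq \lin\{\nu_0\}^{\perp}$, so $\nu_0 \bullet \gamma$ is constant. Path-connectedness of~$M$ then forces $M \subseteq P$ for some affine hyperplane~$P$ orthogonal to~$\nu_0$. Equality of dimensions makes $M$ open in~$P$, the hypothesis $\Clos M = M$ makes $M$ closed in~$P$, and connectedness of~$P$ yields $M = P$.

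\textbf{Case $\lambda \ne 0$.} Define the Lipschitz map $\phi: M \to \Real{n+1}$ by $\phi(y) = y - \lambda^{-1}\eta(y)$. For $\Haus{n}$-a.e.~$y \in M$ and every $u \in \Tan(M,y)$,
\begin{displaymath}
\Der \phi(y) u = u - \lambda^{-1}\Der \eta(y) u = u - \lambda^{-1}\kappa(y) u = 0 \,.
\end{displaymath}
Invoking the local-chart constancy argument once more gives $\phi \equiv a$ for some $a \in \Real{n+1}$, so $y - a = \lambda^{-1}\eta(y)$ for every $y \in M$. Since $\nu(y)$ lies on the $F$-unit sphere, Lemma~\ref{lem:Wulff-as-image}\ref{i:W:norm-G=1} gives $F^*(\eta(y)) = 1$, whence $F^*(y-a) = |\lambda|^{-1}$. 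Thus $M \subseteq \Sigma := \partial \cballF{F^*}{a}{|\lambda|^{-1}}$. By Lemma~\ref{lem:Wulff-as-image}\ref{i:W:F-conj-C11}, $F^*$ is of class~$\cnt{1,1}$, so $\Sigma$ is a connected $n$-dimensional $\cnt{1,1}$ submanifold of~$\Real{n+1}$. Invariance of domain makes $M$ relatively open in~$\Sigma$, the hypothesis $\Clos M = M$ makes $M$ relatively closed in~$\Sigma$, and connectedness of~$\Sigma$ forces $M = \Sigma$, i.e.\ the boundary of the Wulff shape of radius~$|\lambda|^{-1}$ centred at~$a$.

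The only genuine subtlety is the step ``Lipschitz map on~$M$ whose tangential differential vanishes $\Haus{n}$-a.e.\ is constant'', handled by transporting to local $\cnt{1,1}$ charts and invoking the standard Euclidean result for Lipschitz maps; everything else is algebraic manipulation powered by the duality encoded in Lemma~\ref{lem:Wulff-as-image}.
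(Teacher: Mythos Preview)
Your argument is correct and follows essentially the same route as the paper: invoke Lemma~\ref{lem:kappa-constant} to get $\kappa\equiv\lambda$, then use that the tangential derivative of $\eta-\lambda\,\id{}$ vanishes $\Haus{n}$-a.e.\ to conclude $\eta(z)-\lambda z$ is a constant~$c$, and finish via $F^*(\grad F(\nu))=1$ from Lemma~\ref{lem:Wulff-as-image}\ref{i:W:norm-G=1}. The only cosmetic difference is that the paper writes the constancy argument once for $\eta-\lambda\,\id{}$ and then splits on~$\lambda$, whereas you split first and run the constancy argument separately in each case; your version also spells out more explicitly the invariance-of-domain step that upgrades $M\subseteq\Sigma$ to $M=\Sigma$, which the paper leaves implicit in the phrase ``because $\Clos M\without M=\varnothing$''.
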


\begin{proof}
    In~view of~\ref{lem:kappa-constant} we obtain $\lambda \in \Real{}$ such
    that
    \begin{displaymath}
        \Der \eta(z) u = \lambda u
        \quad \text{for all $\Haus{n}$ almost all $z \in M$ and $u \in \Tan(M,z)$} \,.
    \end{displaymath}
    Therefore, $\Der(\eta - \lambda \id{\Real{n}}) = 0$ and we obtain $c \in
    \Real{n}$ such that
    \begin{displaymath}
        \eta(z) - \lambda z = c
        \quad \text{for all $z \in M$} \,.
    \end{displaymath}
    If $\lambda = 0$, then $\eta$ is constant and $M$ must be a~hyperplane
    because $\Clos{M} \without M = \varnothing$. In~case $\lambda \ne 0$ we set
    $a = - c \lambda^{-1}$ and $\rho = |\lambda|^{-1}$. Then
    \begin{displaymath}
        F^*(z - a)
        = \rho F^*(\eta(z))
        = \rho F^*(\grad F(\nu(z))) = \rho
        \quad \text{for all $z \in M$} \,,
    \end{displaymath}
    by~\ref{lem:Wulff-as-image}\ref{i:W:norm-G=1}. Hence, $M = \partial
    \cballF{F}{a}{\rho}$ because $\Clos{M} \without M = \varnothing$.
\end{proof}

\section{The Lusin property for anisotropic (n,h)-sets}\label{lusi}

In this section $F$ is an~elliptic integrand of class~$\cnt{2}$ and $\Omega
\subseteq \Real{n+1}$ is open.

\begin{Definition}[\protect{cf.~\cite[Definition~3.1]{2019arXiv190103514D}}]
    \label{def:nh-set}
    We say that $Z \subseteq \Omega$ is an~\emph{(n,h)-set with respect to~$F$}
    if $Z$ is relatively closed in~$\Omega$ and for any open set $N \subseteq
    \Omega$ such that $\partial N \cap \Omega$ is smooth and~$Z \subseteq
    \Clos{N}$ there holds
    \begin{displaymath}
        F(\mathbf{n}(N,p)) \mathbf{h}_F(\var{n}(\partial N),p) \bullet \mathbf{n}(N,p) \ge - h
        \quad \text{for $p \in Z \cap \partial N \cap \Omega$} \,.
    \end{displaymath}
\end{Definition}

\begin{Lemma}
    \label{weak maximum principle}
    Suppose $T \in \grass{n+1}{n}$, $\eta \in T^{\perp}$, $|\eta| = 1$, $f : T
    \to T^{\perp}$ is pointwise differentiable of order~$2$ at~$0$ and satisfies
    $f(0)=0$ and $\pt \Der f(0) = 0$, $\Sigma = \{ x + f(x) : x \in T\}$, $h
    \geq 0$, and $\Gamma$ is an~$(n,h)$ subset of~$\Omega$ with respect to~$F$
    such that $0 \in \Gamma$ and
    \begin{displaymath}
        \Gamma \cap V \subseteq \bigl\{ z : z \bullet \eta \leq
        f \circ \project{T}(z) \bullet \eta \bigr\}
    \end{displaymath}
    for some open neighbourhood~$V$ of~$0$. Then
    \begin{displaymath}
        F(\eta) \pt \mathbf{h}_{F}(\Sigma,0) \bullet \eta \geq -h.
    \end{displaymath}
\end{Lemma}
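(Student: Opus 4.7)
The plan is to construct a family of smooth open test sets $N_\epsilon \subseteq \Omega$ that touch $\Gamma$ at $0$ from one side, apply the $(n,h)$-condition of Definition~\ref{def:nh-set} to each $N_\epsilon$, and pass to the limit as $\epsilon \downarrow 0$. To make the geometry transparent, I would introduce the scalar $\phi : T \to \Real{}$ by $\phi(x) = f(x) \bullet \eta$, so that $\Sigma$ is the graph of $x \mapsto \phi(x) \eta$ and the containment hypothesis reads $\Gamma \cap V \subseteq \{ z : z \bullet \eta \leq \phi(\project{T}(z)) \}$. Pointwise differentiability of $f$ of order $2$ at $0$ together with $f(0) = 0$ and $\pt\Der f(0) = 0$ provides a symmetric bilinear form $Q$ on $T$ (namely the one associated with $\pt\Der^2(f \circ \project{T})(0) \bullet \eta$) such that $\phi(x) = \tfrac{1}{2} Q(x,x) + o(|x|^2)$ as $x \to 0$.

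For each $\epsilon > 0$ I would choose $r_\epsilon > 0$ small enough that $\phi(x) \leq \tfrac{1}{2}Q(x,x) + \epsilon |x|^2$ for $x \in T$ with $|x| \leq r_\epsilon$ (which is possible since the remainder is~$o(|x|^2)$) and produce a smooth function $\phi_\epsilon : T \to \Real{}$ that coincides with $x \mapsto \tfrac{1}{2} Q(x,x) + \epsilon |x|^2$ on $\oball{0}{r_\epsilon} \cap T$ and grows arbitrarily rapidly outside a slightly larger ball. Setting
\begin{equation*}
    N_\epsilon = \Omega \cap \bigl\{ z : z \bullet \eta < \phi_\epsilon(\project{T}(z)) \bigr\}
\end{equation*}
yields an open subset of~$\Omega$ with smooth boundary, $0 \in \partial N_\epsilon$, $\mathbf{n}(N_\epsilon, 0) = \eta$, and the local domination $\phi_\epsilon \geq \phi$ on $\oball{0}{r_\epsilon} \cap T$ guarantees that some $V' \subseteq V$ satisfies $\Gamma \cap V' \subseteq \Clos{N_\epsilon}$. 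By additionally enlarging $\phi_\epsilon$ outside a compact set (and, if necessary, taking a union with the complement of a~small closed neighbourhood of~$0$ in $\Omega$ and smoothing the resulting boundary well away from $0$), one arranges $\Gamma \subseteq \Clos{N_\epsilon}$ throughout~$\Omega$ without altering the $2$-jet of $\phi_\epsilon$ at~$0$.

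Applying Definition~\ref{def:nh-set} to $N_\epsilon$ at $p = 0 \in \Gamma \cap \partial N_\epsilon$ yields
\begin{equation*}
    F(\eta) \,\mathbf{h}_F(\var{n}(\partial N_\epsilon), 0) \bullet \eta \geq -h \,.
\end{equation*}
Since $\partial N_\epsilon$ is near~$0$ the smooth graph of $\phi_\epsilon$ with $\phi_\epsilon(0) = 0$ and $\Der \phi_\epsilon(0) = 0$, Remark~\ref{rem:Fmc-for-smooth} combined with Definition~\ref{def:ptwise-F-mc} identify $-F(\eta) \mathbf{h}_F(\var{n}(\partial N_\epsilon), 0) \bullet \eta$ with $\trace(\bm{\Xi}(\Der^2 F(\eta)) \circ \bm{\Xi}(\Der^2 \phi_\epsilon(0)))$ (after extending $\Der^2\phi_\epsilon(0)$ by zero on $T^\perp$). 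As $\epsilon \downarrow 0$ we have $\Der^2 \phi_\epsilon(0) \to Q$, so continuity of $\bm{\Xi}$ and of the trace gives convergence to $-F(\eta) \pt\mathbf{h}_F(\Sigma, 0) \bullet \eta$; passing to the limit in the $(n,h)$-inequality yields the desired bound.

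The principal obstacle is the construction of~$N_\epsilon$: the hypothesis only controls $\Gamma$ inside the possibly small neighbourhood~$V$, so the extension of $\phi_\epsilon$ outside~$\oball{0}{r_\epsilon}$ must be engineered so that $\Gamma \subseteq \Clos{N_\epsilon}$ holds globally in~$\Omega$ while keeping $\partial N_\epsilon$ smooth and the $2$-jet of $\phi_\epsilon$ at~$0$ undisturbed. This is purely technical and hinges on the fact that $\Gamma$ is relatively closed in~$\Omega$ and that the anisotropic mean curvature of a smooth graph at~$0$ depends only on the $2$-jet of the graphing function at that point, so any modification performed away from~$0$ is harmless for the limit computation.
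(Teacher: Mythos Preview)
Your approach is essentially the same as the paper's: replace $f$ by the quadratic $\tfrac{1}{2}Q(x,x)+\varepsilon|x|^2$, use the resulting smooth graph as a barrier that touches $\Gamma$ at $0$, read off the $(n,h)$-inequality for the barrier, and let $\varepsilon\downarrow 0$. The only difference is in how the $(n,h)$-property is invoked. The paper does not work from Definition~\ref{def:nh-set} directly; it cites the \emph{barrier principle} \cite[Proposition~3.1(iii)]{2019arXiv190103514D}, which already provides the local form of the condition (a smooth hypersurface touching $\Gamma$ from one side in a neighbourhood of a point yields the mean-curvature bound at that point). This spares the paper your ``principal obstacle'', namely the globalisation of $N_\varepsilon$ so that $\Gamma\subseteq\Clos{N_\varepsilon}$ holds in all of $\Omega$ with $\partial N_\varepsilon\cap\Omega$ smooth. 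Your sketch of that globalisation (union with $\Omega\setminus\cball{0}{\rho}$ followed by a smoothing away from $0$) is correct and standard, and it genuinely does not disturb the $2$-jet at~$0$; it is just extra bookkeeping that the paper outsources to the cited reference.
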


\begin{proof}
    We mimic the proof of~\cite[3.4]{2019arXiv190310379S}. Fix $\varepsilon > 0$,
    define $P,\psi : T \to T^{\perp}$ by
    \begin{gather}
        P(x) = \tfrac 12 \langle (x, x) ,\, \pt \Der^2 f(0) \rangle 
        \quad \text{for $x \in T$} \,,
        \\
        \psi(x) = \bigl( P(x) \bullet \eta + \varepsilon |x|^2 \bigr) \eta
        \quad \text{for $x \in T$} \,,
        \\
        \text{and set} \quad
        M = \Real{n} \cap \bigl\{ x + \psi(x) : x \in T \bigr\} \,.
    \end{gather}
    Note that since $f$ is pointwise differentiable of order~$2$ at~$0$, it 
    follows that
    \begin{displaymath}
        \lim_{x \to 0} \frac{|f(x) - P(x)|}{|x|^{2}} = 0 \,.
    \end{displaymath}
    Hence, we choose $r > 0$ such that $f(x) \bullet \eta \le \psi(x) \bullet
    \eta$ for $x \in \oball 0r \cap T$. Since $\Gamma$ is an $(n,h)$ subset of
    $\Omega$, $M$ is smooth and touches $\Gamma$ at~$0$, and $\Gamma \cap \oball
    0r \subseteq \Real{n+1} \{ x : x \bullet \eta \le \psi(x) \bullet \eta \}$, we
    may use the barrier principle~\cite[Proposition 3.1(iii)]{2019arXiv190103514D}
    to derive the estimate
    \begin{displaymath}
        F(\eta) \mathbf{h}_F(M,0) \bullet \eta \ge -h \,.
    \end{displaymath}
    Recall~\ref{def:ptwise-F-mc} to see that
    \begin{displaymath}
        - F(\eta) \pt \mathbf{h}_{F}(M,0)
        = \eta \trace \bigl( \bm{\Xi}(\Der^2 F(\eta)) \circ \bm{\Xi}(\Der^2 (\psi \circ \project{T})(0) \bullet \eta) \bigr)
        \,.
    \end{displaymath}
    Since
    \begin{displaymath}
        \Der^2 (\psi \circ \project{T})(0)(u,v) \bullet \eta
        = \pt \Der^2 (f \circ \project{T})(0)(u,v) \bullet \eta + 2 \varepsilon u \bullet \project{T}v
        \quad \text{for $u,v \in \Real{n}$} 
    \end{displaymath}
    we see that
    \begin{displaymath}
        - F(0,\eta) \pt \mathbf{h}_{F}(\Sigma,0)
        = - F(\eta) \pt \mathbf{h}_{F}(M,0)
        - 2 \varepsilon \eta \trace \bigl( \bm{\Xi}(\Der^2 F(\eta)) \bigr)
        \,.
    \end{displaymath}
    Passing to the limit $\varepsilon \downarrow 0$ we obtain the claim.
\end{proof}

\begin{Definition}
    \label{Lusin Property}
    Suppose $ A \subseteq \Real{n+1} $ is a closed set. We say that $N(A)$
    satisfies the \textit{$n$~dimensional Lusin~(N) condition in~$\Omega$} if
    and only if
    \begin{displaymath}
        S \subseteq A \cap \Omega
        \quad \text{and} \quad
        \Haus{n}(S) = 0
        \quad \text{implies that} \quad
        \Haus{n}(N(A)|S) = 0 \,.
    \end{displaymath}
\end{Definition}

\begin{Theorem}
    \label{Lusin property for (m,h) sets}
    Suppose $0 \leq h < \infty$, $A$ is an~$(n,h)$ subset of~$\Omega$ with
    respect to~$F$ that is a~countable union of~sets with finite~$\Haus{n}$
    measure.
    
    Then $N(A)$ satisfies the $n$ dimensional Lusin~(N) condition in~$\Omega$.
\end{Theorem}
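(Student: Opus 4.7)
The plan is to adapt the isotropic argument of~\cite[3.7]{2019arXiv190310379S} to our anisotropic setting, with~\ref{weak maximum principle} as the crucial new input. First, using that $A$ is a~countable union of sets of finite $\Haus{n}$ measure, we reduce to the case in which $A$ lies in a~fixed compact subset of $\Omega$ with $\Haus{n}(A) < \infty$. Then $N(A)$ decomposes as
\[
N(A) = \bigcup_{k=1}^{\infty} N_{k}, \qquad N_{k} = \bigl\{ (a,u) \in N(A) : \bm{\delta}_{A}(a+su) = s \text{ for all } 0 \leq s \leq 1/k \bigr\},
\]
and it suffices to establish the Lusin~(N) condition on each piece $N_{k}$ separately.

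For each $(a_{0},u_{0}) \in N_{k}$ the Euclidean open ball $\oball{a_{0}+u_{0}/k}{1/k}$ avoids~$A$ and touches it at~$a_{0}$. Writing its lower cap near~$a_{0}$ as a~smooth graph~$f$ over the tangent hyperplane $T = u_{0}^{\perp}$, with $f(0) = 0$ and $\pt \Der f(0) = 0$, the set $A$ lies locally below this graph in the $u_{0}$-direction. Applying~\ref{weak maximum principle} with $\eta = u_{0}$, $\Sigma$ the graph of~$f$, and $\Gamma = A$ yields, at $\Haus{n}$-a.e.\ $(a_{0},u_{0}) \in N_{k}$, a~pointwise second-order touching of~$A$ at~$a_{0}$ from the $u_{0}$-side whose second-order coefficients are uniformly bounded in terms of $k$, $h$, and the ellipticity data $C(F)$ from~\ref{def:CF}; the uniformity stems from the fixed ball radius $1/k$ valid throughout~$N_{k}$.

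Combined with the countable $n$-rectifiability of~$N(A)$, this uniform second-order bound yields a~local bilipschitz graph representation of~$N_{k}$ over suitable $n$-planes in $\Real{n+1} \times \sphere{n}$. Consequently the first projection $\pi_{1} : N_{k} \to A$, $(a,u) \mapsto a$, decomposes as a~countable family of bilipschitz maps between $n$-rectifiable pieces, and such maps preserve $\Haus{n}$-null sets in both directions. Hence $\Haus{n}(N_{k}|S) = 0$ whenever $\Haus{n}(S) = 0$, and summing over~$k$ completes the proof.

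\emph{Main obstacle.} The delicate step is passing from the purely pointwise second-order information provided by~\ref{weak maximum principle} (a~statement at an~individual point) to a~quantitative local bilipschitz graph structure on~$N_{k}$, uniformly in~$(a_{0},u_{0})$. The rotational symmetry of the touching ball simplifies this passage in the isotropic case; here one must couple the $F$-curvature bound with the Wulff-shape geometry via~\ref{lem:Wulff-as-image}\ref{i:W:F-conj-C11} and the self-adjointness of $\Der(\grad F \circ \nu)$ on tangent spaces recorded in~\ref{rem:two-self-adjoint} in order to extract comparable control.
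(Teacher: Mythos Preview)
Your application of~\ref{weak maximum principle} with $\Sigma$ equal to the boundary sphere of $\oball{a_0+u_0/k}{1/k}$ is vacuous: the lemma only outputs the inequality $F(\eta)\,\pt\mathbf{h}_F(\Sigma,0)\bullet\eta \ge -h$ for the touching surface~$\Sigma$, and for a round sphere of radius $1/k$ this quantity is already explicit and positive, so nothing new about~$A$ is obtained. In particular, the argument produces no ``second-order touching of~$A$'' --- an $(n,h)$-set need not possess any pointwise second-order structure at a generic point of its normal bundle --- and your subsequent claim that $N_k$ admits a local bilipschitz graph representation, with $\pi_1$ decomposing into bilipschitz pieces, is left unjustified. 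The ``main obstacle'' you flag is therefore not a technicality but the missing idea itself.

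The paper avoids this by interposing the level sets $S(A,r)$ of the Euclidean distance function. At regular points $x \in S(A,r) \cap R(A) \cap A_\tau$ (for a suitable $\tau > 1$ depending on $C(F)$) these level sets \emph{do} carry pointwise second-order structure, with approximate principal curvatures $\chi_{A,1}(x), \ldots, \chi_{A,n}(x)$; see~\cite[Lemma~2.8]{2019arXiv190310379S}. One applies~\ref{weak maximum principle} with $\Sigma$ equal to the second-order jet of $S(A,r)$ at~$x$, translated to touch~$A$ at $\bm{\xi}_A(x)$: this bounds a $\Der^2F$-weighted sum of the $\chi_{A,i}(x)$ from above by~$h$, and together with the lower bound $\chi_{A,j}(x) \ge -((\lambda-1)r)^{-1}$ coming from $x \in A_\tau$ and the ellipticity of~$F$, one obtains $\chi_{A,j}(x) < 1/r$ for every~$j$. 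Hence the approximate Jacobian $\prod_j(1 - r\chi_{A,j}(x))$ of $\bm{\xi}_A$ along $S(A,r)$ is strictly positive, and the Lusin~(N) condition follows from~\cite[Lemma~3.5]{2019arXiv190310379S} combined with the bilipschitzness of $\bm{\psi}_A|A_\tau \cap S(A,r)$ --- not from any bilipschitzness of the projection $N(A) \to A$.
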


\begin{proof}
    We modify the proof of~\cite[3.7]{2019arXiv190310379S}. Let $\tau > \lambda
    = 2 C(F)^2(n-1) + 1$, where $C(F)>0$ is defined in~\eqref{def:CF}.

    \medskip
    
    Claim 1: \emph{Assume $r \in \Real{}$ satisfies $0 \leq h <
      \frac{1}{2C(F)r}$, and $ x\in S(A,r) \cap R(A) \cap A_{\tau} \cap
      \bm{\xi}_{A}^{-1}(A) $ (see \ref{regular ponints}) is such that
      $\Hdensity{n}{S(A,r) \without A_{\tau}}{x}=0$, and the conclusions of
      \cite[Lemma 2.8]{2019arXiv190310379S} are satisfied. Consider an
      orthonormal basis $v_1,\dots,v_{n+1}$ in which the matrix of~$\ap \Der
      \bm{\nu}_{A}(x)$ is diagonal and $v_{n+1} = \bm{\nu}_{A}(x)$. We introduce
      abbreviations
      \begin{displaymath}
          \partial_{ij} F(\nu) = \langle (v_i,v_j) ,\, \Der^2 F(\nu) \rangle
          \quad \text{for $i,j \in \{1,2,\ldots,n+1\}$} \,.
      \end{displaymath}
      Then we have
      \begin{displaymath}
          \sum_{i=1}^{n}\partial_{ii}F(\bm{\nu}_{A}(x))\chi_{A,i}(x) \leq h 
          \quad \text{and} \quad
          \| {\textstyle \bigwedge_{n}} \big( (\Haus{n}\restrict S(A,r),n) \ap \Der \bm{\xi}_{A}(x)\big) \| > 0 \,.
      \end{displaymath}
    }

    \medskip
    
    Noting that $\bm{\xi}_{A}|A_{\lambda}$ is approximately differentiable
    at~$x$ (since $x \in R(A)$), we
    employ~\cite[3.8, 3.10(3)(6)]{2017arXiv170801549S}
    and~\cite[3.2.16]{Federer1969} to conclude that
    \begin{gather}
        \label{Lusin property for (m,h) sets:2}
        \chi_{A,j}(x) \geq -(\lambda -1)^{-1}r^{-1}
        \quad \textrm{for $ j = 1, \ldots , n $} \,,
        \\
        \label{Lusin property for (m,h) sets:4}
        \ap \Der \bm{\xi}_{A}(x)| \Tan(\Haus{n}\restrict S(A,r),x) = (\Haus{n}
        \restrict S(A,r), n)\ap \Der \bm{\xi}_{A}(x).
    \end{gather}
    We choose $f$, $V$ and $T$ as in \cite[Lemma 2.8]{2019arXiv190310379S} and
    $0 < s < r/2 $ such that $\oball{x}{s} \subseteq V$. We assume
    $\bm{\xi}_{A}(x) = 0 \in \Gamma$ and we notice that $\project{T}(x) = 0$ and
    $\bm{\nu}_{A}(x) = r^{-1}x$. Then we define $g(\zeta) = f(\zeta) - x$ for
    $\zeta \in T$,
    \begin{displaymath}
        U = \project{T} \big( \mathbf{U}(x,s) \cap \{ \chi + f(\chi) : \chi \in T \} \big),
        \quad W = \{ y-x : y \in \project{T}^{-1}(U) \cap \mathbf{U}(x,s) \} \,.
    \end{displaymath}
    It follows that $W$ is an open neighbourhood of~$0$ and
    \begin{equation}
        \label{Lusin property for (m,h) sets:5}
        W \cap A \subseteq
        \{ z : z \bullet \bm{\nu}_{A}(x) \leq g( \project{T}(z) ) \bullet \bm{\nu}_{A}(x) \} \,.
    \end{equation}
    Indeed, if \eqref{Lusin property for (m,h) sets:5} did not hold, then there
    would be $y \in \mathbf{U}(x,s) \cap \project{T}^{-1}[U]$ such that $y-x \in
    A$ and $y \bullet \bm{\nu}_{A}(x) > f(\project{T}(y)) \bullet
    \bm{\nu}_{A}(x)$; noting that
    \begin{displaymath}
        \project{T}(y) + f(\project{T}(y)) \in \mathbf{U}(x,s) \cap S(A,r)
        \quad \text{and} \quad
        |\project{T}(y) + f(\project{T}(y))-y| < r \,,
    \end{displaymath}
    we would conclude
    \begin{displaymath}
        |\project{T}(y) + f(\project{T}(y))-(y-x)|
        = r - (y-f(\project{T}(y))) \bullet \bm{\nu}_{A}(x)
        < r = \bm{\delta}_A( \project{T}(y) + f(\project{T}(y)) )
    \end{displaymath}
    which is a contradiction.

    Since $ -\chi_{A,1}(x), \ldots , - \chi_{A,n}(x) $ are the eigenvalues of
    $\pt \Der^{2} g(0) \bullet \bm{\nu}_{A}(x) $ and $0 \in A$, we~may
    apply~\ref{weak maximum principle} to infer that
    \begin{equation}
        \label{Lusin property for (m,h) sets:3}
        \partial_{11}F(\bm{\nu}_{A}(x)) \chi_{A,1}(x)
        + \ldots
        + \partial_{nn}F(\bm{\nu}_{A}(x)) \chi_{A,n}(x) \leq h
    \end{equation}
    and combining~\eqref{ell2}, \ref{def:CF}, \eqref{Lusin property for (m,h)
      sets:2}, and~\eqref{Lusin property for (m,h) sets:3} we get that for every
    $j = 1, \ldots , n$
    \begin{multline}
        \chi_{A,j}(x)
        \leq C(F)\partial_{jj}F(\bm{\nu}_{A}(x))\chi_{A,j}(x)
        \leq C(F)h-C(F)\sum_{k\neq j,k=1}^n\partial_{kk}F(\bm{\nu}_{A}(x))\chi_{A,k}(x)
        \\
        \leq C(F)h-\frac{C(F)^2(n-1)}{(\lambda -1)r}<\frac{1}{r} \,.
    \end{multline}
    From~\eqref{Lusin property for (m,h) sets:4}
    and~\cite[3.5]{2017arXiv170801549S} follows that $1-r\chi_{A,j}(x)$ are the
    eigenvalues of $(\Haus{n} \restrict S(A,r), n)\ap \Der \bm{\xi}_{A}(x)$ for
    $j = 1, \ldots , n$; hence, we obtain
    \begin{displaymath}
        \bigl\| {\textstyle \bigwedge_{n}} \bigl( (\Haus{n}\restrict S(A,r),n) \ap \Der \bm{\xi}_{A}(x) \bigr) \bigr\|
        \geq \prod_{i=1}^{n}\big(1-\chi_{A,i}(x)r\big) > 0 \,. 
    \end{displaymath}

    \medskip
    
    Claim 2: \emph{For $\Haus{n}$ a.e. $ x \in S(A,r) \cap A_{\tau} \cap
      \bm{\xi}_{A}^{-1}(A) $ and for $ \Leb{1} $ a.e.\ $ 0 < r <
      \frac{1}{2C(F)h} $ the conclusion of Claim 1 holds.}

    \medskip
    
    This is immediate since
    \begin{displaymath}
        \Hdensity{n}{S(A,r)\without A_{\tau}}{x} =0
    \end{displaymath} for $ \Haus{n} $ a.e.\ $ x \in S(A,r) \cap A_{\tau} $ and
    for every $ r > 0 $ by \cite[2.13(1)]{2017arXiv170801549S} and
    \cite[2.10.19(4)]{Federer1969}, and $ \Haus{n}(S(A,r)\without R(A)) =0 $ for $
    \Leb{1} $ a.e.\ $ r > 0 $ by \cite[3.16]{2017arXiv170801549S}.

    \medskip
    
    Claim 3: \emph{$N(A)$ satisfies the $n$ dimensional Lusin~(N) condition in
      $\Omega$.}

    \medskip
    
    Let $R \subseteq A$ be such that $\Haus{n}(R) = 0$. For $r > 0$ it
    follows from \cite[3.17, 3.18(1), 4.3]{2017arXiv170801549S} that
    $\bm{\psi}_{A}|A_{\tau} \cap S(A,r)$ is a~bilipschitz homeomorphism and
    \begin{displaymath}
        \bm{\psi}_{A}(\bm{\xi}_{A}^{-1}\{x\} \cap A_{\tau} \cap S(A,r) )
        \subseteq N(A,x)
        \quad \textrm{for $ x \in A $} \,.
    \end{displaymath}
    Noting Claim 2 and \cite[3.10(1)]{2017arXiv170801549S}, we can apply
    \cite[Lemma 3.5]{2019arXiv190310379S} with $W$ and $f$ replaced by $S(A,r)
    \cap A_{\tau} \cap \bm{\xi}_{A}^{-1}(A) $ and $ \bm{\xi}_{A}|S(A,r) \cap
    A_{\tau} \cap \bm{\xi}_{A}^{-1}(A) $ to infer that
    \begin{displaymath}
        \Haus{n}(\bm{\xi}_{A}^{-1}(R) \cap S(A,r) \cap A_{\tau} ) =0 \quad
        \textrm{for $ \Leb{1} $ a.e.\ $ 0 < r < \frac{1}{2C(F)}h^{-1} $.}
    \end{displaymath}
    We notice that $ N(A)|R = \bigcup_{r>0}\bm{\psi}_{A}(S(A,r)\cap A_{\tau}
    \cap \bm{\xi}_{A}^{-1}(R)) $ by \cite[4.3]{2017arXiv170801549S} and $
    \bm{\psi}_{A}(S(A,r)\cap A_{\tau}) \subseteq \bm{\psi}_{A}(S(A,s) \cap
    A_{\tau}) $ if $ s < r $ by \cite[3.18(2)]{2017arXiv170801549S}. Henceforth,
    it follows that
    \begin{displaymath}
        \Haus{n}(N(A)|R) = 0 \,.
        \qedhere
    \end{displaymath}
\end{proof}

The following weak maximum principle is a simple consequence of~\cite[Theorem
3.4]{2019arXiv190103514D}.
\begin{Lemma}
    \label{mh}
    Assume
    \begin{displaymath}
        V \in \Var{n}(\Omega) \,,
        \quad
        F(\overline{\mathbf{h}}_F(V,x)) \le h \quad \text{for $\|V\|$~almost all $x$} \,,
        \quad
        \| \delta_F V\|_{\mathrm{sing}} = 0 \,.
    \end{displaymath}
    Then $\spt \|V\|$ is an $(n, h)$ subset of~$\Omega$ with respect to~$F$.
\end{Lemma}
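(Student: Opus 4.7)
The plan is to invoke the anisotropic barrier principle from \cite[Theorem 3.4]{2019arXiv190103514D} as a black box, after translating the present hypotheses into the form required there. The proof should reduce to a short, direct verification.

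First I would observe that $\spt\|V\|$ is a relatively closed subset of~$\Omega$ simply because it is the support of a~Radon measure on~$\Omega$; thus the topological requirement in~\ref{def:nh-set} is automatic and we only need to verify the curvature inequality at touching points. Fix therefore an open set $N \subseteq \Omega$ with $\partial N \cap \Omega$ smooth and $\spt\|V\| \subseteq \Clos{N}$, and pick $p \in \spt\|V\| \cap \partial N \cap \Omega$. Since $\spt\|V\| \subseteq \Clos{N}$ while $p \in \partial N$, the smooth hypersurface $\partial N$ \emph{touches} $\spt\|V\|$ at~$p$ from the outside of~$N$, i.e.\ $\spt\|V\|$ lies locally on the $-\mathbf{n}(N,p)$ side of~$\partial N$. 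This is exactly the geometric configuration to which the barrier principle applies.

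Next I would rewrite the hypotheses in the language of~\cite[Theorem~3.4]{2019arXiv190103514D}: the assumption $\|\delta_F V\|_{\mathrm{sing}} = 0$ means that the first variation $\delta_F V$ with respect to~$F$ is absolutely continuous with respect to~$\|V\|$, so that by~\ref{def:F-mean-curvature} we have
\begin{displaymath}
    \delta_F V(g) = - \int \overline{\mathbf{h}}_F(V,x) \bullet g(x) \ud \|V\|(x)
    \quad \text{for $g \in \VF(\Omega)$} \,,
\end{displaymath}
with $F(\overline{\mathbf{h}}_F(V,x)) \le h$ for $\|V\|$-a.e.~$x$. Applying the cited barrier principle to the varifold~$V$ and the smooth touching hypersurface $\partial N \cap \Omega$ at the point~$p$ yields
\begin{displaymath}
    F(\mathbf{n}(N,p)) \, \mathbf{h}_F(\var{n}(\partial N),p) \bullet \mathbf{n}(N,p) \ge -h \,,
\end{displaymath}
which is precisely the condition in~\ref{def:nh-set}. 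Since $N$ and $p$ were arbitrary, this shows that $\spt\|V\|$ is an~$(n,h)$ subset of~$\Omega$ with respect to~$F$.

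The main (and essentially only) nontrivial step is the appeal to~\cite[Theorem~3.4]{2019arXiv190103514D}; the rest of the argument amounts to checking the sign convention for the outer normal and confirming that the absence of a singular part of~$\delta_F V$ is exactly the hypothesis needed there to avoid any boundary-type contribution. Since the cited theorem is formulated for varifolds with pointwise bounded anisotropic mean curvature and no singular part, no approximation or regularisation is needed in our setting.
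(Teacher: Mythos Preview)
Your overall strategy---reducing the lemma to an invocation of a result from~\cite{2019arXiv190103514D}---is the same as the paper's, but you have mischaracterised what \cite[Theorem~3.4]{2019arXiv190103514D} actually says, and this creates a genuine gap. That theorem is \emph{not} a barrier principle for a single varifold with bounded anisotropic mean curvature; it is a statement about the \emph{area blowup set} of a sequence of varifolds whose anisotropic first variations are controlled in a suitable integral sense. (The barrier principle in that reference is Proposition~3.1, but it is stated for $(n,h)$-sets, not for varifolds, so citing it here would be circular.) Your last paragraph, asserting that ``the cited theorem is formulated for varifolds with pointwise bounded anisotropic mean curvature and no singular part,'' is simply not what Theorem~3.4 provides.

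The paper's proof supplies exactly the missing bridge. It sets $V_k = k \cdot V$, so that the blowup set $Z = \{x : \limsup_k \|V_k\|(\cball xr) = \infty \text{ for all } r>0\}$ equals $\spt\|V\|$ by construction. It then checks the integral first-variation bound required by Theorem~3.4, namely
\begin{displaymath}
    \delta_F V_k(g) \le h \int F^*(g)\,\ud\|V_k\|,
\end{displaymath}
by combining $\|\delta_F V\|_{\mathrm{sing}}=0$, the pointwise bound $F(\overline{\mathbf{h}}_F(V,x)) \le h$, and the elementary inequality $u \bullet v \le F(u)F^*(v)$. Only after these verifications does Theorem~3.4 yield that $Z = \spt\|V\|$ is an $(n,h)$-set. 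Your proposal skips both the blowup-sequence construction and the dual-norm estimate, so as written it does not constitute a proof.
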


\begin{proof}
    For every $k \in \mathbb N$ let $V_k = k \cdot V$. Note that
    \begin{displaymath}
        u \bullet v = \frac{u}{F(u)} \bullet \frac{v}{F^*(v)} F(u) F^*(v)
        \le F(u) F^*(v)
        \quad \text{whenever $u,v \in \Real{n+1}$} \,;
    \end{displaymath}
    thus,
    \begin{displaymath}
        \delta_F V_k(g)
        = - \int \overline{\mathbf{h}}_F(V,x) \bullet g(x) \ud \|V_k\|(x)
        \le h \int F^*(g(x)) \ud \|V_k\|(x)
        \quad \text{for $k \in \nat$ and $g \in \VF(\Omega)$} \,.
    \end{displaymath}
    Moreover, the area blowup set
    \begin{displaymath}
        Z = \bigl\{
        x \in \Clos{\Omega} :
        \limsup_{k \to \infty} \|V_k\|(\cball{x}{r}) = +\infty \, \text{ for every $r>0$}
        \bigr\}
    \end{displaymath}
    coincides with $\spt \|V\|$; hence, \cite[Theorem 3.4]{2019arXiv190103514D}
    yields that $\spt \|V\| = Z$ is an $(n, h)$ set.
\end{proof}

\section{The anisotropic unit normal bundle}\label{anis}
In this section we will need to work with a suitable anisotropic variant of the
normal bundle for closed sets. Let us introduce some definitions.

\begin{Definition}
    \label{normal bundel anisotropo}
    Suppose $F$ is an~elliptic integrand and $A \subseteq \Real{n+1}$ is closed.
    The \emph{generalized anisotropic unit normal bundle of $A$} is defined as
    \begin{displaymath}
        N^F(A) = (A \times \partial \cballF{F^*}01)
        \cap \{ (a,u) : \bm{\delta}^{F}_{A}(a + su) = s \; \textrm{for some $ s > 0 $} \} \,.
    \end{displaymath} 
\end{Definition}

\begin{Lemma}
    \label{normal bundel isotropo vs anisotropo}
    Suppose $F$ is an~elliptic integrand of class~$\cnt{1,1}$ and $A \subseteq
    \Real{n+1}$ is closed. Then
    \begin{displaymath}
        N^F(A) = (\id{\Real{n+1}} \times \grad F) [ N(A) ]
        = \bigl\{ (a, \grad F(u)) : (a,u) \in N(A) \bigr\} \,.
    \end{displaymath} 
    In particular, $N^{F}(A)$ is a~countably $n$~rectifiable Borel subset of
    $\Real{n+1} \times \partial \cballF{F^*}01$.
\end{Lemma}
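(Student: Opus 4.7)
The plan is to interpret both normal bundles via their geometric ``exterior-ball'' content and convert between them using the duality of $F$ and $F^*$. Specifically, $(a, u) \in N(A)$ if and only if $a \in A$, $|u| = 1$, and $\oball{a + tu}{t} \cap A = \varnothing$ for some $t > 0$, and analogously $(a, v) \in N^F(A)$ corresponds to $a \in A$, $F^*(v) = 1$, and $\oballF{F^*}{a + sv}{s} \cap A = \varnothing$ for some $s > 0$. First I would note, using Lemma~\ref{lem:Wulff-as-image}\ref{i:W:norm-G=1}\ref{i:W:G-lip-homeo}\ref{i:W:G-star-inv-G}\ref{i:W:F-conj-C11} together with the $0$-homogeneity of $\grad F$, that $\grad F$ restricts to a Lipschitz bijection $\sphere{n} \to \partial \cballF{F^*}{0}{1}$, so the asserted identity $N^F(A) = \{(a, \grad F(u)) : (a, u) \in N(A)\}$ is well posed.

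For the inclusion $\supseteq$, given $(a, u) \in N(A)$ witnessed by $t > 0$, set $v = \grad F(u)$. I would show that for every sufficiently small $s > 0$ one has $\cballF{F^*}{a + sv}{s} \subseteq \cball{a + tu}{t}$, which immediately gives $(a, v) \in N^F(A)$. Writing a point of the Wulff shape as $a + s(v + u')$ with $F^*(u') \le 1$, a direct computation reduces the containment to the inequality
$$ F(u) + u' \bullet u \ge \tfrac{s}{2t} |v + u'|^2 \quad \text{for every $u' \in \Real{n+1}$ with $F^*(u') \le 1$.} $$
The left-hand side is an affine nonnegative function of $u'$ --- by Euler's identity $v \bullet u = F(u)$, and by strict convexity of $F^*$ (Lemma~\ref{lem:Wulff-as-image}\ref{i:W:F-strictly-conv}) the unique minimiser of $u' \mapsto u' \bullet u$ on $\{F^* \le 1\}$ is $u' = -v$ --- vanishing precisely at $u' = -v$. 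Propagating the ellipticity of $F$ to uniform convexity of $\partial \cballF{F^*}{0}{1}$ at its support point $-v$ yields a quadratic lower bound $F(u) + u' \bullet u \ge c |v + u'|^2$ on $\{F^* \le 1\}$ for some $c = c(F, u) > 0$; then any $s \le 2tc$ works.

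The inclusion $\subseteq$ follows by the symmetric argument with the roles of $F$ and $F^*$ interchanged (legitimate by $F^{**} = F$, Lemma~\ref{lem:Wulff-as-image}\ref{i:W:F-dbl-conj}): given $(a, v) \in N^F(A)$ with witness $s > 0$, I let $u \in \sphere{n}$ be the unique preimage of $v$ under $\grad F$ and produce $t > 0$ small enough that $\cball{a + tu}{t} \subseteq \cballF{F^*}{a + sv}{s}$. For the ``in particular'' clause, $N^F(A)$ is the Lipschitz image of the countably $n$-rectifiable set $N(A)$ under $(a, u) \mapsto (a, \grad F(u))$, hence countably $n$-rectifiable; and it is Borel thanks to Lemma~\ref{lem:xi-diff-ae}\ref{i:xi:delta-xi}, which supplies the $F_\sigma$ representation
$$ N^F(A) = \bigcup_{k = 1}^{\infty} \bigl\{ (a, v) \in A \times \partial \cballF{F^*}{0}{1} : \bm{\delta}^F_A(a + v/k) = 1/k \bigr\}, $$
each set being closed by continuity of $\bm{\delta}^F_A$ (Lemma~\ref{lem:xi-diff-ae}\ref{i:xi:delta-1-lip}). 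The main obstacle is the quadratic lower bound in the second step: it requires transferring the ellipticity of $F$ to a quantitative uniform-convexity estimate for the dual unit ball $\partial \cballF{F^*}{0}{1}$ near its support point $-v$, and in particular relies on the fact that ellipticity of $F$ yields positive definiteness of $\Der^2 F^*(-v)$ on the tangent hyperplane $\{u\}^{\perp}$.
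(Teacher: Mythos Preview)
Your approach and the paper's share the same geometric core: both prove the two inclusions by fitting a Euclidean ball inside a tangent $F^*$-ball (and vice versa) at the boundary point~$a$, then verify that the normals correspond via $\grad F$ using Lemma~\ref{lem:Wulff-as-image}\ref{i:W:G-n-id}. The execution differs. Rather than carry out your explicit containment computation and reduce to the quadratic inequality, the paper simply observes (via Lemma~\ref{lem:Wulff-as-image}\ref{i:W:F-conj-C11}) that $\partial\cballF{F^*}{0}{1}$ is a $\cnt{1,1}$~hypersurface, which immediately yields an interior tangent Euclidean ball at every boundary point---this is precisely the needed containment, and the reverse inclusion is declared ``completely analogous.'' Your explicit route works too, but note a small misattribution in your last paragraph: the uniform convexity of $W^* = \cballF{F^*}{0}{1}$ (equivalently, the positive lower bound on the curvatures of~$\partial W^*$, which is what drives your quadratic estimate) comes from the upper bound on~$\Der^2 F$---recall that $F$ is the support function of~$W^*$, so bounded~$\Der^2 F$ means bounded radii of curvature of~$\partial W^*$---whereas ellipticity of~$F$ is what makes $\partial W^*$ of class~$\cnt{1,1}$. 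Both hypotheses are present, so your argument is sound; only the roles are swapped from what you wrote.
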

\begin{proof}
    Given $(a,u) \in N^{F}(A)$, there exists $s > 0$ such that
    \begin{displaymath}
        a \in A \cap \partial \oballF{F^*}{a+su}{s}
        \quad \text{and} \quad
        \oballF{F^*}{a+su}{s} \cap A = \varnothing \,.
    \end{displaymath}
    Since $\partial \oballF{F^*}{a+su}{s}$ is submanifold of~$\Real{n+1}$ of
    class~$\cnt{1,1}$ (see~\ref{lem:Wulff-as-image}\ref{i:W:F-conj-C11}), there
    exists $r > 0$ and $x \in \Real{n+1}$ such that $\oball xr \subseteq
    \oballF{F^*}{a+su}{s}$ and $a \in \partial \oball xr$. It follows that
    \begin{displaymath}
        \mathbf{n}(\oball xr,a) = \mathbf{n}(\oballF{F^*}{a+su}{s},a)
        \quad \text{and} \quad
        (a,-\mathbf{n}(\oballF{F^*}{a+su}{s},a)) \in N(A) \,.
    \end{displaymath}
    Since $\grad F(\mathbf{n}(\oballF{F^*}01,z)) = z$ for every $z \in \partial
    \oballF{F^*}01$ (see~\ref{lem:Wulff-as-image}\ref{i:W:G-n-id}), it follows
    that
    \begin{displaymath}
        \grad F \bigl( -\mathbf{n}(\oballF{F^*}{a+su}{s},a) \bigr)
        = - \grad F \bigl( \mathbf{n}(\oballF{F^*}{a+su}{s},a) \bigr)
        = - \frac{a-(a+su)}{s}
        = u \,,
    \end{displaymath}
    i.e. $(a,u) \in (\id{\Real{n+1}} \times \grad F)(N(A))$.
    
    The proof of the reverse inclusion $(\id{\Real{n+1}} \times \grad F)(N(A))
    \subseteq N^F(A)$ is completely analogous and the postscript follows from
    \cite[4.3]{2017arXiv170801549S}.
\end{proof}

\begin{Definition}
    \label{anisotropic Lusin Property}
    Suppose $ \Omega \subseteq \Real{n+1} $ is open, $F$ is an~elliptic
    integrand, and $A \subseteq \Real{n+1}$ is closed. We say that $N^{F}(A)$
    satisfies the \emph{$n$ dimensional Lusin~(N) condition in $\Omega$} if and
    only if the following implication holds,
    \begin{displaymath}
        S \subseteq A \cap \Omega, \quad \Haus{n}(S) = 0
        \quad \implies \quad \Haus{n}(N^{F}(A)|S) = 0 \,.
    \end{displaymath}
\end{Definition}

\begin{Lemma}
    \label{anisotropic Lusin}
    Assume $F$ is an~elliptic integrand of class~$\cnt{1,1}$, $\Omega \subseteq
    \Real{n+1}$ is open, and $A \subseteq \Real{n+1}$ is closed. Then $N(A)$
    satisfies the $n$~dimensional Lusin~(N) condition in~$\Omega$ if and only if
    $N^{F}(A)$ satisfies the $n$~dimensional Lusin~(N) condition in~$\Omega$.
\end{Lemma}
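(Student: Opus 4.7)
The plan is to deduce this from the factorisation of $N^F(A)$ provided by the preceding lemma, together with the bilipschitz properties of $\grad F$ guaranteed by Lemma~\ref{lem:Wulff-as-image}.

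First I would introduce the map $\Psi : \Real{n+1} \times (\Real{n+1} \without \{0\}) \to \Real{n+1} \times \Real{n+1}$ defined by $\Psi(a,u) = (a, \grad F(u))$, so that by Lemma~\ref{normal bundel isotropo vs anisotropo} one has $N^F(A) = \Psi[N(A)]$ and, for every $S \subseteq A$, the evident identity $\Psi[\,N(A)|S\,] = N^F(A)|S$.

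The key step is to show that $\Psi$ restricts to a bilipschitz homeomorphism between $A \times \sphere{n}$ and $A \times \partial \cballF{F^*}{0}{1}$. Writing $G = \grad F$ and $W = \oballF{F}{0}{1}$, the radial projection $\sigma : \sphere{n} \to \partial W$ given by $\sigma(u) = u/F(u)$ is bilipschitz because $F$ is of class $\cnt{1,1}$ with $\inf F[\sphere{n}] > 0$, and by Lemma~\ref{lem:Wulff-as-image}\ref{i:W:G-lip-homeo}\ref{i:W:F-conj-C11} the restriction $G|\partial W : \partial W \to \partial \cballF{F^*}{0}{1}$ is bilipschitz. Since $G$ is positively $0$-homogeneous, one has $G|\sphere{n} = (G|\partial W) \circ \sigma$, so $G|\sphere{n}$ is bilipschitz; consequently $\Psi|A \times \sphere{n}$ is bilipschitz onto its image.

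Now if $S \subseteq A \cap \Omega$ has $\Haus{n}(S) = 0$, then since $\Psi$ is bilipschitz on $A \times \sphere{n}$ there exists a constant $c > 0$ such that
\begin{equation}
c^{-1} \Haus{n}(N(A)|S) \le \Haus{n}(N^F(A)|S) \le c\, \Haus{n}(N(A)|S) \,,
\end{equation}
and the equivalence of the two Lusin conditions follows immediately. I do not expect any substantial obstacle here, as the proof reduces to gathering the already-established bilipschitz properties of $G|\partial W$ from Lemma~\ref{lem:Wulff-as-image} together with the elementary bilipschitz equivalence of $\sphere{n}$ and $\partial W$ via radial projection.
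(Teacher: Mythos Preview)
Your proof is correct and follows essentially the same approach as the paper: both use Lemma~\ref{normal bundel isotropo vs anisotropo} to write $N^F(A)|S = (\id{\Real{n+1}} \times \grad F)[N(A)|S]$ and then invoke the bilipschitz property of $\grad F$ on the unit sphere coming from Lemma~\ref{lem:Wulff-as-image}\ref{i:W:F-conj-C11}. Your version is in fact more careful, since you make explicit the radial projection $\sphere{n} \to \partial W$ that the paper leaves implicit, and you state the correct two-sided inequality $c^{-1}\Haus{n}(N(A)|S) \le \Haus{n}(N^F(A)|S) \le c\,\Haus{n}(N(A)|S)$ rather than the literal equality written in the paper (which is only justified because both sides are zero under the running assumption).
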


\begin{proof}
    Let $S \subseteq A \cap \Omega$ be such that $\Haus{n}(S) = 0$. Assume that
    either $\Haus{n}(N^F(A)|S) = 0$ or $\Haus{n}(N(A)|S) = 0$. Since the map
    $\id{\Real{n+1}} \times \grad F$ is a~bilipschitz homeomorphism
    (see~\ref{lem:Wulff-as-image}\ref{i:W:F-conj-C11}), we deduce that
    $\Haus{n}(N^F(A)|S) = \Haus{n}( (\id{\Real{n+1}} \times \grad F)(N(A)|S) ) =
    \Haus{n}(N(A)|S) = 0$ as desired.
\end{proof}

\begin{Definition}
    Let $F$ be an~elliptic integrand and $A \subseteq \Real{n+1}$ be closed.
    The \emph{anisotropic reach function} $ r_{A}^{F} : N^{F}(A) \rightarrow [0
    ,\infty] $ is defined by
    \begin{displaymath}
        r_{A}^{F}(a,u) = \sup \{ s : \bm{\delta}_{A}^{F}(a+su) = s  \}
        \quad \text{for $(a,u) \in N^{F}(A)$} \,.
    \end{displaymath}
    
    The \emph{anisotropic reach of $A$} is defined by
    \begin{displaymath}
        \reach^{F}(A)
        = \inf\bigl\{ \sup \{ r : \oballF{F^*}ar \subseteq \dmn \bm{\xi}_{A}^{F}  \} : a \in A \bigr\}
        = \sup \bigl\{ r : \{ x : \bm{\delta}^F_A(x) < r \} \subseteq \dmn \bm{\xi}^F_A \bigr\} 
        \,.
    \end{displaymath}
\end{Definition}

\begin{Remark}
    \label{rem:rFA-Borel}
    Since $\bm{\delta}^F_A$ is Lipschitz continuous
    (see~\ref{lem:Wulff-as-image}\ref{i:xi:delta-1-lip}), the function $f_s :
    N^F(A) \to \Real{}$ given by $f_s(a,u) = \bm{\delta}^F_A(a + su) - s$ is
    also Lipschitz for any $s \in \Real{}$. Therefore $r^F_A$ is
    lower-semicontinuous. In~particular, $r^F_A$ is a~Borel function.
\end{Remark}

\begin{Lemma}
    \label{cor:reach-submanifold-of-class-11}
    Suppose $F$ is an elliptic integrand of class~$\cnt{1,1}$ and $A$ is
    a~closed submanifold of~$\Real{n+1}$ of class~$\cnt{1}$ such that $\reach^F
    A > 0$. Then $\reach A > 0$ and $A$ is a~submanifold of~$\Real{n+1}$ of
    class~$\cnt{1,1}$.
\end{Lemma}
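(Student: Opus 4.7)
The plan is to deduce from~$\reach^F A > 0$ a~uniform two-sided Euclidean tangent-ball condition on~$A$, yielding $\reach A > 0$, and then to invoke the~classical fact that a~closed $\cnt{1}$ submanifold of positive Euclidean reach is of class~$\cnt{1,1}$.

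Set~$r = \reach^F A$. By Lemma~\ref{lem:Wulff-as-image}\ref{i:W:F-conj-C11} the~closed unit Wulff shape $\cballF{F^*}{0}{1}$ is a~convex body with $\cnt{1,1}$ boundary, hence has positive Euclidean reach~$r_0 > 0$; scaling shows that every Wulff shape of~radius~$R$ contains, at each of its boundary points, an~inscribed Euclidean ball of radius~$Rr_0$ tangent there from inside. The~central step is to~prove that for every $a \in A$ and every Euclidean unit normal~$v$ to~$A$ at~$a$, the~Wulff shape $W^v_a := \oballF{F^*}{a + r \grad F(v)}{r}$ is disjoint from~$A$. Granting this, Lemma~\ref{lem:Wulff-as-image}\ref{i:W:G-n-id} identifies the~outer Euclidean unit normal of~$W^v_a$ at~$a$ as~$-v$, so the~inscribed Euclidean ball at~$a$ has centre $a + rr_0 v$, is contained in~$W^v_a$, and is therefore disjoint from~$A$. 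The same~argument applied with~$-v$ in place of~$v$, combined with the~classical supporting-ball characterisation of reach (cf.~\cite{Federer1959}), yields $\reach A \ge rr_0 > 0$.

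To prove the~central step, I~would first secure $(a, \grad F(v)) \in N^F(A)$ by applying $\bm{\xi}^F_A$ to the~points~$a + \varepsilon v$ for $\varepsilon \downarrow 0$: these points lie in the~anisotropic tubular neighbourhood, and continuity of~$\bm{\xi}^F_A$ (Lemma~\ref{lem:xi-diff-ae}\ref{i:xi:xi-cont}) together with Lemma~\ref{normal bundel isotropo vs anisotropo} and the~$\cnt{1}$-tangentiality of~$A$ at~$a$ force the~limiting anisotropic direction to be~$\grad F(v)$. Suppose towards contradiction that $s^* := r^F_A(a, \grad F(v)) < r$. Lemma~\ref{lem:xi-diff-ae}\ref{i:xi:delta-on-fiber}~\ref{i:xi:delta-xi} yield $\bm{\xi}^F_A(a + s \grad F(v)) = a$ for~$0 < s \le s^*$, while for~$s \in (s^*, r)$ the~unique nearest point~$b_s$ is distinct from~$a$ yet tends to~$a$ as $s \downarrow s^*$ by continuity of~$\bm{\xi}^F_A$. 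Representing~$A$ near~$a$ as the~graph of a~$\cnt{1}$ function $f : T_a A \to \Real{}$ with $f(0) = 0$ and $\grad f(0) = 0$, and Taylor-expanding~$F^*$ at~$s^*\grad F(v)$ (admissible because Lemma~\ref{lem:Wulff-as-image}\ref{i:W:F-conj-C11} gives $F^* \in \cnt{1,1}$), the~first-order minimality condition for~$b_s$ combined with strict convexity of~$F^*$ (Lemma~\ref{lem:Wulff-as-image}\ref{i:W:F-strictly-conv}) contradicts uniqueness of the~nearest point at~$a + s^*\grad F(v)$.

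Once $\reach A > 0$ is established, a~classical graph argument shows that every local graph representation of~$A$ has Lipschitz gradient with constant at most $1/\reach A$, so~$A$ is of~class~$\cnt{1,1}$. The~principal obstacle in this plan is the~Taylor-expansion contradiction, which requires a~careful interplay between the~$\cnt{1,1}$ regularity of~$F^*$ and the~mere~$\cnt{1}$ regularity of~$A$.
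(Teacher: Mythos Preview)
Your overall geometric idea—inscribe Euclidean balls in Wulff shapes to convert the anisotropic reach condition into a Euclidean one—is the same as the paper's. The difference is the direction of the argument, and that is where your proposal runs into trouble.

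You try to show, for every $a\in A$ and every Euclidean unit normal $v$ at $a$, that $(a,\grad F(v))\in N^F(A)$ and then that $r^F_A(a,\grad F(v))\ge r$. But for a manifold that is merely $\cnt{1}$, there is no a~priori reason one can touch $A$ at $a$ from the $v$-side by any ball, Euclidean or Wulff; this is precisely what one is trying to \emph{prove}. Your route via $\bm{\xi}^F_A(a+\varepsilon v)$ does not close this gap: the $F^*$-nearest point $a_\varepsilon$ to $a+\varepsilon v$ need not be $a$, and continuity only gives $(a_\varepsilon,\eta_\varepsilon)\in N^F(A)$ with $a_\varepsilon\to a$; since $N^F(A)$ is not known to be closed, this does not yield $(a,\grad F(v))\in N^F(A)$. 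Even in codimension one, where Lemma~\ref{lem:xi-diff-ae}\ref{i:xi:direction} forces $\eta_\varepsilon\to\grad F(v)$, you are left with the same ``principal obstacle'' you identify—proving $r^F_A\ge r$ along that direction—and your Taylor-expansion sketch is too vague to carry this through with only $\cnt{1}$ regularity of~$A$.

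The paper sidesteps all of this by reversing the roles: it starts from an arbitrary point $z$ with $\bm{\delta}_A(z)=r'<\rho s$ (where $s=\reach^F A$ and $\rho$ is your $r_0$) and picks a Euclidean nearest point $x$. The touching condition $(x,u)\in N(A)$ with $u=(z-x)/r'$ is then \emph{automatic}, so Lemma~\ref{normal bundel isotropo vs anisotropo} gives $(x,\grad F(u))\in N^F(A)$ for free. The paper then circumscribes the Euclidean ball $\cball{z}{r'}$ by the Wulff ball $\cballF{F^*}{w}{r'/\rho}$ with $w=x+(r'/\rho)\grad F(u)$, and since $r'/\rho<s$ one has $w\in\dmn\bm{\xi}^F_A$; the inscribed Euclidean ball touches the Wulff boundary only at~$x$, so any second Euclidean nearest point $x'\ne x$ would lie strictly inside the Wulff ball, contradicting that $A$ meets $\cballF{F^*}{w}{r'/\rho}$ only at the unique $F^*$-nearest point. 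This yields $z\in\dmn\bm{\xi}_A$ directly, hence $\reach A\ge\rho s>0$, and then \cite[4.20]{Federer1959} gives the $\cnt{1,1}$ regularity.
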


\begin{proof}
    Set $W = \cballF{F^*}01$. First observe that $\partial W$ is a~submanifold
    of $\Real{n+1}$ of class~$\cnt{1,1}$
    by~\ref{lem:Wulff-as-image}\ref{i:W:F-conj-C11}. Therefore, there exists
    $\rho \in (0,1)$ such that for each $x \in \partial W$ we have
    \begin{displaymath}
        \cball{x + \rho \mathbf{n}(W,x)}{\rho} \subseteq W \,.
    \end{displaymath}

    Assume $\reach^F A = s > 0$. Let $z \in \Real{n+1}$ be such that
    $\bm{\delta}_A(z) = r < \rho s$ and find $x \in A$ with $|z-x| =
    \bm{\delta}_A(z)$. Set $B = \cball zr$, $u = -\mathbf{n}(B,x)$, and $w = x +
    r \grad F(u) / \rho$. Note that $u \in \Tan(A,x)^{\perp}$. We have
    $\bm{\delta}^F_A(w) = r /\rho < s$ so $w \in \dmn \bm{\xi}^F_A$ and
    $\cballF{F^*}{w}{r/\rho} \cap A = \{x\}$ and $\cball zr \subseteq
    \cballF{F^*}{w}{r/\rho}$; hence, $z \in \dmn \bm{\xi}_A$.

    Since $z$ was arbitrary we see that $\{ x : \bm{\delta}_A(x) < \rho s \}
    \subseteq \dmn \bm{\xi}_A$ which shows that $\reach A \ge \rho s$. The~second
    part of the conclusion readily follows from~\cite[4.20]{Federer1959}.
\end{proof}

\begin{Corollary}
    \label{cor:level-set-of-class-11}
    Suppose $A \subseteq \Real{n+1}$ is closed and $\reach^F A > 0$. Then
    $S^F(A,r)$ is a~submanifold of~$\Real{n+1}$ of class~$\cnt{1,1}$ of
    dimension~$n$ for every $0 < r < \reach^F A$.
\end{Corollary}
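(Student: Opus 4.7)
The plan is to apply Lemma~\ref{cor:reach-submanifold-of-class-11} with $S^F(A,r)$ playing the role of~$A$. This amounts to verifying that $S^F(A,r)$ is a closed $\cnt{1}$~submanifold of~$\Real{n+1}$ of dimension~$n$ with positive anisotropic reach, after which the lemma immediately upgrades the regularity to~$\cnt{1,1}$.

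For the $\cnt{1}$~regularity, first note that $r < \reach^F A$ and continuity of $\bm{\delta}_A^F$ place $S^F(A,r)$ in the interior of the open set $\{z : \bm{\delta}_A^F(z) < \reach^F A\} \subseteq \dmn \bm{\xi}_A^F$. On this set $\bm{\delta}_A^F$ is continuously differentiable by Lemma~\ref{lem:xi-diff-ae}\ref{i:xi:delta-sqr-C1}, and the identity $G(\grad \bm{\delta}_A^F(x))\bm{\delta}_A^F(x) = x - \bm{\xi}_A^F(x)$ from Lemma~\ref{lem:xi-diff-ae}\ref{i:xi:xi-by-grad-delta} combined with Lemma~\ref{lem:Wulff-as-image}\ref{i:W:norm-G=1} yields $F(\grad \bm{\delta}_A^F(x)) = 1$ at every $x \in S^F(A,r)$, so the gradient is nowhere zero on the level set. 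The implicit function theorem then realises $S^F(A,r) = (\bm{\delta}_A^F)^{-1}\{r\}$ as a closed $\cnt{1}$~submanifold of~$\Real{n+1}$ of dimension~$n$.

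For the positivity of $\reach^F(S^F(A,r))$, I would exhibit the anisotropic nearest-point projection onto $S^F(A,r)$ explicitly in a neighbourhood of the level set. For $y$ on the exterior side with $r < \bm{\delta}_A^F(y) < \reach^F A$, setting $a = \bm{\xi}_A^F(y)$, the candidate $z = a + r(y-a)/\bm{\delta}_A^F(y)$ belongs to $S^F(A,r)$ by the ray property of Lemma~\ref{lem:xi-diff-ae}\ref{i:xi:delta-on-fiber} and realises $F^*(y - z) = \bm{\delta}_A^F(y) - r$; for any other $w \in S^F(A,r)$, writing $b = \bm{\xi}_A^F(w)$ and using $F^*(y - b) \geq \bm{\delta}_A^F(y)$ together with the triangle inequality $r = F^*(w - b) \leq F^*(w - y) + F^*(y - b)$ yields $F^*(w - y) \geq \bm{\delta}_A^F(y) - r$, and equality forces $w = z$ via strict convexity of~$F^*$ (Lemma~\ref{lem:Wulff-as-image}\ref{i:W:F-strictly-conv}). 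The parallel construction on the interior side (where $y$ satisfies $\bm{\delta}_A^F(y) < r$ sufficiently close to~$r$ and the candidate $z$ is obtained by extrapolating the ray from~$a$ past~$y$) is the main obstacle: one must verify that this extrapolated point lies in $S^F(A,r)$, i.e.\ that $r_A^F(a,(y-a)/\bm{\delta}_A^F(y)) \geq r$, which requires exploiting the lower-semicontinuity of $r_A^F$ (Remark~\ref{rem:rFA-Borel}) together with $r < \reach^F A$ through a limiting argument along the normal ray.

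Once both facts have been established, a direct application of Lemma~\ref{cor:reach-submanifold-of-class-11} with $S^F(A,r)$ in place of~$A$ delivers the desired $\cnt{1,1}$~regularity of $S^F(A,r)$ as a submanifold of~$\Real{n+1}$ of dimension~$n$.
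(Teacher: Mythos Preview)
Your overall strategy coincides with the paper's: show that $S^F(A,r)$ is a closed $\cnt{1}$~hypersurface with $\reach^F S^F(A,r)>0$, then invoke Lemma~\ref{cor:reach-submanifold-of-class-11}. The paper simply asserts $\reach^F S^F(A,r)\ge\min\{\reach^F A-r,\,r\}$ and moves on; you attempt to justify it, which is commendable, but two points need fixing.

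First, in your exterior estimate the triangle inequality is applied in the wrong direction: from $r=F^*(w-b)\le F^*(w-y)+F^*(y-b)$ you only get $F^*(w-y)\ge r-F^*(y-b)$, which is negative. The inequality you need is $\bm{\delta}_A^F(y)\le F^*(y-b)\le F^*(y-w)+F^*(w-b)=F^*(y-w)+r$, yielding $F^*(y-w)\ge\bm{\delta}_A^F(y)-r$; the equality analysis via strict convexity then goes through as you indicate.

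Second, for the interior side the right target is indeed $r_A^F\bigl(a,(y-a)/\bm{\delta}_A^F(y)\bigr)\ge r$, but lower\nobreakdash-semicontinuity of $r_A^F$ is not the tool: it bounds $r_A^F$ at a limit point from below by values along an approximating sequence, whereas here you have no such sequence with the bound in hand. What is actually needed is the (standard but not entirely trivial) fact that $\reach^F A=R$ forces $r_A^F(a,u)\ge R$ for \emph{every} $(a,u)\in N^F(A)$; equivalently, $\bm{\delta}_A^F(a+su)=s$ for all $0<s<R$. Once this is known, for $y=a+su$ with $0<s<r$ the point $w=a+ru$ lies in $S^F(A,r)$, and the same triangle\nobreakdash-inequality plus strict\nobreakdash-convexity argument as on the exterior side (now with the roles of $a$ and $w$ interchanged) gives uniqueness of the nearest point. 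The paper does not spell this step out either, so your proof is at the same level of completeness as the paper's once the triangle inequality is corrected.
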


\begin{proof}
    Since $R = \reach^{F} A > 0$, we have that $\Real{n+1} \cap \{ y :
    \bm{\delta}^F_A(y) < R \} \subseteq \dmn \bm{\xi}^F_A$. Therefore,
    from~\ref{lem:xi-diff-ae}\ref{i:xi:delta-sqr-C1}\ref{i:xi:xi-by-grad-delta}
    and~\ref{lem:Wulff-as-image}\ref{i:W:G-star-inv-G} it follows that
    $\bm{\delta}^F_{A}|\Real{n+1} \cap \{ y : 0 < \bm{\delta}^F_A(y) < R \}$ is
    of class~$\cnt{1}$ and
    \begin{displaymath}
        \grad \bm{\delta}^F_A(y)
        = \grad F^* \Bigl( \frac{ x - \bm{\xi}^F_A(y) }{ \bm{\delta}^F_A(y) } \Bigr)
        \ne 0
        \quad \text{for $y \in \Real{n+1}$ with $0 < \bm{\delta}^F_A(y) < R$} \,.
    \end{displaymath}
    Consequently, for every $0 < r < R$ we see that $S^F(A,r) = (
    \bm{\delta}^F_A )^{-1}\{r\}$ is a~closed submanifold of~$\Omega$ of
    class~$\cnt{1}$ of dimension~$n$. Moreover, we have $\reach^F S^F(A,r) \ge
    \min \{ R-r , r \} > 0$ so the conclusion follows
    from~\ref{cor:reach-submanifold-of-class-11}.
\end{proof}

We prove now the anisotropic version of \cite[Theorem~3]{MR2036332}, whose proof
is essentially along the same lines.

\begin{Theorem}
    \label{sufficient condition for positive reach}
    Assume $F$ is an~elliptic integrand of class~$\cnt{1,1}$ and $A \subseteq
    \Real{n+1}$ is closed.  Let $r >0 $ and suppose that for every
    $\Haus{n}$~measurable bounded function $f : \Real{n+1} \times \partial
    \oballF{F^*}01 \rightarrow \Real{}$ with compact support there are numbers
    $c_{1}(f), \ldots , c_{n+1}(f) \in \Real{}$ such that
    \begin{equation}
        \label{woww}
        \int_{\Real{n+1} \without A} f \circ \bm{\psi}_{A}^{F} \cdot \CF{ \{ x : \bm{\delta}^{F}_{A}(x) \leq t \} } \ud \Leb{n+1}
        = \sum_{j=1}^{n+1} c_{j}(f) t^{j}
        \quad \text{for $0 < t < r $} \,.
    \end{equation}
    Then $\reach^{F}(A) \geq r$.
\end{Theorem}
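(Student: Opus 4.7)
The plan is to argue by contradiction. Suppose $\reach^{F}(A) < r$; by the characterisation of $\reach^{F}$, there exist $t_0 \in (0, r)$ and $x_0 \in \Real{n+1}$ with $\bm{\delta}^{F}_{A}(x_0) = t_0$ admitting two distinct nearest points $a_1, a_2 \in A$ realising that $F^*$-distance. Write $u_i = (x_0 - a_i)/t_0 \in \partial \oballF{F^*}{0}{1}$; Lemma~\ref{lem:xi-diff-ae}\ref{i:xi:delta-xi} yields $(a_i, u_i) \in N^{F}(A)$ and $\bm{\xi}^{F}_{A}(a_i + s u_i) = a_i$ for $0 < s < t_0$. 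Moreover $r^{F}_{A}(a_i, u_i) = t_0 < r$: for $s$ slightly larger than~$t_0$, strict convexity of~$F^*$ (Lemma~\ref{lem:Wulff-as-image}\ref{i:W:F-strictly-conv}) combined with $u_1 \neq u_2$ yields $F^*(a_i + s u_i - a_j) < s$ whenever $i \neq j$, so the ray past $t_0$ is strictly closer to the other nearest point than~$s$.

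The first substantive step is to rewrite \eqref{woww} as an integral over the normal bundle. The parametrisation $(a, u, s) \mapsto a + s u$ from $\{(a, u, s) \in N^{F}(A) \times (0, \infty) : s \le r^{F}_{A}(a, u)\}$ covers $\dmn \bm{\xi}^{F}_{A} \without A$ bijectively, a full-$\Leb{n+1}$-measure set by Lemma~\ref{lem:xi-diff-ae}\ref{i:xi:dmn-xi-ae}; using the $\cnt{1,1}$~regularity of the Wulff balls (Lemma~\ref{lem:Wulff-as-image}\ref{i:W:F-conj-C11}) one computes its $\Leb{n+1}$-Jacobian to be a polynomial $J^{F}(a, u, s)$ in $s$ of degree at most~$n$, whose coefficients encode the anisotropic principal curvatures at~$(a, u)$. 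Consequently \eqref{woww} becomes
\begin{displaymath}
    \int_{N^{F}(A)} f(a, u) \int_0^{\min\{r^{F}_{A}(a, u),\, t\}} J^{F}(a, u, s) \, ds \, d(\Haus{n} \restrict N^{F}(A))(a, u) = \sum_{j=1}^{n+1} c_j(f)\, t^j
\end{displaymath}
for every bounded measurable compactly supported~$f$. Since the right-hand side is a~polynomial in~$t$ of degree at most~$n+1$ on the whole interval $(0, r)$ regardless of~$f$, the threshold $\min\{r^{F}_{A}, t\}$ cannot engage on a~set of positive $\Haus{n}$-measure: otherwise localising~$f$ on $\{r^{F}_{A} < r\}$ would break the polynomial form. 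Hence $r^{F}_{A}(a, u) \ge r$ for $\Haus{n}$-almost every $(a, u) \in N^{F}(A)$.

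The main obstacle is upgrading this almost-everywhere bound to the pointwise conclusion $\reach^{F}(A) \ge r$, contradicting $r^{F}_{A}(a_1, u_1) = t_0 < r$. The idea is that the failure of uniqueness at~$x_0$ cannot be isolated: by continuity of~$\bm{\xi}^{F}_{A}$ on its domain (Lemma~\ref{lem:xi-diff-ae}\ref{i:xi:xi-cont}) and the $\cnt{1,1}$~regularity of the Wulff balls, any small perturbation of $(a_1, u_1)$ within $N^{F}(A)$ produces a~Wulff ball $\cballF{F^*}{a + s u}{s}$ that still touches~$A$ at some point close to~$a_2$ for some $s = s(a, u)$ bounded uniformly above by an $s_0 < r$ close to~$t_0$, so that $r^{F}_{A}(a, u) \le s_0 < r$ throughout an open neighbourhood of $(a_1, u_1)$ in $N^{F}(A)$. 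Since $N^{F}(A)$ is countably $n$-rectifiable and $(a_1, u_1)$ lies in its support, such a~neighbourhood has positive $\Haus{n}$-measure, contradicting the a.e.\ bound established above and completing the proof.
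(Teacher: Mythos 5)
Your reduction to a bad point $x_0$ with two nearest points $a_1,a_2$ and the computation $r^{F}_{A}(a_i,u_i)=t_0<r$ (via strict convexity of $F^*$) is correct, and your coarea rewriting of \eqref{woww} parallels the paper's Claim~2. But the two steps that carry the whole argument have genuine gaps. First, the hypothesis only controls $\Leb{n+1}$ of the tube, and what the localisation $f=\CF{\{r^{F}_{A}\le\tau\}\cap\{x\in B\}}$ actually yields (as in the paper: the left side of \eqref{woww} becomes independent of $t\in(\tau,r)$, and since the polynomial has no constant term it vanishes identically) is that $\Leb{n+1}\bigl(\{z:0<\bm{\delta}^{F}_{A}(z)\le r,\ r^{F}_{A}(\bm{\psi}^{F}_{A}(z))\le\tau\}\bigr)=0$. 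Your jump from this to ``$r^{F}_{A}\ge r$ for $\Haus{n}$-a.e.\ $(a,u)\in N^{F}(A)$'' is not justified: the approximate Jacobian $J^{F}(a,u,s)$ of $(a,u,s)\mapsto a+su$ could vanish for a.e.\ $s$ on a positive-$\Haus{n}$-measure part of the bad set, in which case that part sweeps out zero volume while still violating your a.e.\ claim. Ruling this out requires strict positivity of the Jacobian for $0<s<r^{F}_{A}(a,u)$ at $\Haus{n}$-a.e.\ point (equivalently, a bilipschitz property of $\bm{\psi}^{F}_{A}$ on level sets), a piece of fine structure theory of the \emph{anisotropic} normal bundle of an arbitrary closed set that is neither proved in the paper nor invoked by it; likewise your assertion that $J^{F}$ is a polynomial in $s$ encoding anisotropic principal curvatures is unproven here (though you do not really use it).

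Second, the concluding step ``since $N^{F}(A)$ is countably $n$-rectifiable and $(a_1,u_1)$ lies in its support, such a neighbourhood has positive $\Haus{n}$-measure'' is not a valid inference: membership in a countably $n$-rectifiable set does not give positive measure of relative neighbourhoods, and you have not shown $(a_1,u_1)\in\spt(\Haus{n}\restrict N^{F}(A))$ -- indeed $\Haus{n}\restrict N^{F}(A)$ need not even be locally finite, as the paper itself remarks inside this very proof. (Your continuity argument showing $r^{F}_{A}\le s_0<r$ on $N^{F}(A)\cap U$ for a neighbourhood $U$ of $(a_1,u_1)$ is fine; what is missing is that this set has positive measure, which would require, e.g., a relative isoperimetric argument on the level sets $S^{F}(A,s)$ for $s<t_0$ together with the a.e.\ differentiability of $\bm{\delta}^{F}_{A}$ and Lipschitz continuity of $(a,u)\mapsto a+su$.) The paper avoids both difficulties by never upgrading to an $\Haus{n}$-a.e.\ statement on $N^{F}(A)$: it keeps the conclusion at the level of $\Leb{n+1}$ (its Claim~2), then for an arbitrary $z$ with $0<\bm{\delta}^{F}_{A}(z)<r$ picks approximating points $z_i\in\dmn\bm{\xi}^{F}_{A}$ with $r^{F}_{A}(\bm{\psi}^{F}_{A}(z_i))\ge r$ and passes to the limit in the condition $\oballF{F^*}{\bm{\xi}^{F}_{A}(z_i)+t\bm{\nu}^{F}_{A}(z_i)}{t}\cap A=\varnothing$ to conclude $z\in\dmn\bm{\xi}^{F}_{A}$ directly. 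As it stands, your proof is incomplete at both hinges; filling them would essentially require developing anisotropic analogues of the isotropic fine theory that the paper's argument is specifically designed to circumvent.
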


\begin{proof}
    Let $ S =  \{ (x,u,t) : (x,u) \in N^F(A) ,\, r^{F}_{A}(x,u)> t \} $ and define $\phi
    : N^{F}(A) \times (0, \infty) \rightarrow \Real{n+1} $
    \begin{displaymath}
        \phi(x,u,t) = x + tu \quad \textrm{for $ (x,u,t) \in N^{F}(A) \times (0,\infty) $.}
    \end{displaymath}

    \medskip
    
    Claim 1: \textit{$\Leb{n+1}(\dmn \bm{\xi}^F_A \without (A \cup \phi(S))) = 0$; hence,
      \begin{displaymath}
          \Leb{n+1}(\Real{n+1} \without (A \cup \phi(S))) = 0 \,.
      \end{displaymath}
    }

    \medskip
    
    Recalling~\ref{lem:Wulff-as-image}\ref{i:xi:delta-xi} we see that
    \begin{displaymath}
        \dmn \bm{\xi}^F_A \without (A \cup \phi(S))
        = \phi( \{ (x,u,t) : (x,u) \in N^F(A) ,\, t = r^F_A(x,u) > 0 \} ) \,.
    \end{displaymath}
    Since $\phi$ is a~locally Lipschitz map, it suffices to prove that
    \begin{equation}
        \label{eq:meas-of-t=r}
        \Haus{n+1}(\{ (x,u,t) : (x,u) \in K ,\, M > t = r^F_A(x,u) > 0 \}) = 0
    \end{equation}
    for all $M \in \nat$ and $K \subseteq N^F(A)$ bounded. By~\ref{normal bundel
      isotropo vs anisotropo} and~\cite[3.2.29]{Federer1969} we know that
    $N^F(A)$ is countably $n$~rectifiable. Hence, it suffices to
    prove~\eqref{eq:meas-of-t=r} for all $M \in \nat$ and $K \subseteq A$ being
    $n$~rectifiable. Assume $K$ and $M$ are such.
    Employing~\cite[3.2.23]{Federer1969} we get
    \begin{equation}
        \label{eq:Haus-K-times-M}
        \Haus{n+1}( K \times (0,M+1) ) = (M+1) \Haus{n}(K) < \infty \,.
    \end{equation}
    Recall~\ref{rem:rFA-Borel}. For $q \in \Real{}$ define the Borel set
    \begin{displaymath}
        V_q = \{ (x,u,t+q) : (x,u) \in K ,\, M > t = r^F_A(x,u) > 0 \}
    \end{displaymath}
    and observe that
    \begin{gather}
        V_q \cap V_p = \varnothing \quad \text{whenever $p \ne q$} \,,
        \quad
        V_q \subseteq K \times (0,M+1)\quad \text{for $0 < q < 1$} \,,
        \\
        \quad \text{and} \quad
        \Haus{n+1}(V_q) = \Haus{n+1}(V_0) \quad \text{for any $q \in \Real{}$} \,.
    \end{gather}
    Therefore, if $\Haus{n+1}(V_0) > 0$, then $\Haus{n+1}(\bigcup\{ V_q : 0 < q
    < 1 ,\, q \text{ rational} \}) = \infty$ which
    contradicts~\eqref{eq:Haus-K-times-M}.

    \medskip
    
    Claim 2: \textit{
      \begin{equation}
          \label{sufficient condition for positive reach: eq1}
          \Leb{n+1} \big(
          \{ z : 0 < \bm{\delta}_{A}^{F}(z) \leq r ,\; r_{A}^{F}(\bm{\psi}_{A}^{F}(z) ) < r  \}
          \big) = 0 \,.
      \end{equation}
    } 
    
    \medskip
    
    In the following sequence of estimates we have to deal with the problem that
    $N^{F}(A)$ might not have locally finite measure so $\mu = \Haus{n}
    \restrict N^F(A)$ might not be Radon and $(\mu,n)$~approximate derivative
    of~$\phi$ might not be well defined.

    Recalling~\ref{lem:Wulff-as-image}\ref{i:xi:delta-xi} one readily infers
    that $\phi|S $ is injective.
    Since $N^{F}(A)$ is Borel and countably
    $n$~rectifiable (see~\ref{normal bundel isotropo vs anisotropo}) we may find
    a partition
    \begin{displaymath}
        N^{F}(A) = {\textstyle \bigcup_{i=1}^\infty} N_i
    \end{displaymath}
    such that each $N_i$ is a~Borel $n$~rectifiable set (in~particular,
    $\Haus{n}(N_i) < \infty$) and the family $\{ N_i : i \in \nat \}$ is
    disjointed; cf.~\cite[2.1.6]{Federer1969}. For $i \in \nat$ w define
    \begin{displaymath}
        \mu_i = \Haus{n} \restrict N_i \,,
        \quad
        S_i = S \cap \bigl( N_i \times (0,\infty) \bigr) \,,
        \quad \text{and} \quad
        J = \sum_{i=1}^{\infty} \big\|{\textstyle\bigwedge_{n}}[(\mu_i, n)\ap \Der \phi] \big\| \CF{S_i} \,.
    \end{displaymath}
    We apply Claim~1 and the coarea formula~\cite[3.2.22]{Federer1969} to find
    that
    \begin{multline}
        \label{11}
        \int_{\Real{n+1} \without A} g \ud \Leb{n+1}
        = \int_{\phi(S)} g \ud \Leb{n+1}
        = \sum_{i=1}^{\infty} \int_{\phi(S_i)} g \ud \Leb{n+1}
        \\
        = \int_{0}^{\infty} \sum_{i=1}^{\infty} \int_{N_i}
        \bigl\| {\textstyle \bigwedge_{n}}[(\mu_i, n)\ap \Der \phi(x,u,t)] \bigr\|
        g(x+tu)
        \CF{\{ (w,v) : r_{A}^{F}(w,v) > t\}}(x,u) \ud \Haus{n}(x,u) \ud t
        \\
        =  \int_{0}^{\infty}
        J(x,u,t) g(x+tu)
        \CF{\{ (w,v) : r_{A}^{F}(w,v) > t\}}(x,u) \ud \Haus{n}(x,u) \ud t 
    \end{multline}
    whenever $g : \Real{n+1} \to \Real{}$ is a non-negative Borel function with
    compact support.
    
    Let $B \subseteq \Real{n+1}$ be compact, $0 < \tau < r$ and $\tau < t< r$. We
    define
    \begin{displaymath}
        N_{\tau,B} = N^{F}(A) \cap \{(x,u): r^{F}_{A}(x,u) \leq \tau, \; x \in B  \} \,, 
    \end{displaymath}
    and we apply \eqref{woww} to the function $\CF{N_{\tau,B}}$ and \eqref{11}
    to the function $g= (\CF{N_{\tau,B}}\circ \bm{\psi}_{A}^{F})\cdot \CF{\{w :
      \bm{\delta}^{F}_{A}(w) \leq t\}}$ to compute
    \begin{multline}
        \label{22}
        \sum_{j = 1}^{n+1}c_{j}(f)t^{j}
        \overset{\eqref{woww}}{=} \int_{\Real{n+1} \without A}
        \CF{N_{\tau,B}}(\bm{\psi}_{A}^{F}(z))
        \CF{\{w : \bm{\delta}^{F}_{A}(w) \leq t\}}(z) \ud \Leb{n+1}z
        \\
        \overset{\eqref{11}}{=} \int_{0}^{\infty} \int_{N^{F}(A)}
        J(x,u,s) \CF{\{w : \bm{\delta}^{F}_{A}(w) \leq t\}}(x+su)
        \CF{\{(w,v) : r_{A}^{F}(w,v) > s\}}(x,u)
        \CF{N_{\tau,B}}(\bm{\psi}_{A}^{F}(x+su)) \ud \Haus{n}(x,u) \ud s
        \\
        = \int_{0}^{\infty} \int_{N^{F}(A)}
        J(x,u,s) \CF{\{w : \bm{\delta}^{F}_{A}(w) \leq t\}}(x+su)
        \CF{\{(w,v) : r_{A}^{F}(w,v) > s\}}(x,u)
        \CF{N_{\tau,B}}(x,u) \ud \Haus{n}(x,u) \ud s
        \\
        = \int_{0}^{\infty} \int_{N^{F}(A)}
        J(x,u,s) \CF{\{w: \bm{\delta}^{F}_{A}(w) \leq t\}}(x+su)
        \CF{\{(w,v) : s < r_{A}^{F}(w,v) \leq \tau\}}(x,u)
        \CF{B}(x) \ud \Haus{n}(x,u) \ud s
        \\
        = \int_{0}^{\infty} \int_{N^{F}(A)}
        J(x,u,s) \CF{\{(w,v) : s < r_{A}^{F}(w,v) \leq \tau\}}(x,u)
        \CF{B}(x) \ud \Haus{n}(x,u) \ud s \,,
    \end{multline}
    where the last equality follows because
    $\bm{\delta}^{F}_{A}(x+su)=s<r_{A}^{F}(x,u)\leq \tau<t$, for every
    $\tau<t<r$.  Whence, we deduce that $\sum_{j = 1}^{n+1}c_{j}(f)t^{j}$ is
    independent of~$t$, for every $\tau < t < r$. Therefore, this polynomial is
    identically zero, a~condition that implies, by the first equality in
    \eqref{22},
    \begin{displaymath}
        \Leb{n+1}\big(
        \{ z : 0 < \bm{\delta}_{A}^{F}(z) \leq r ,\; \bm{\psi}_{A}^{F}(z) \in N_{\tau, B}  \}
        \big) = 0 \,.
    \end{displaymath}
    Since the last equation holds for every $0 < \tau < r$ and for every compact
    set $B \subseteq \Real{n+1}$, we conclude that~\eqref{sufficient condition
      for positive reach: eq1} holds.

    \medskip
    
    Claim 3: \textit{$\reach^{F}(A) \geq r $.}

    \medskip
    
    Let $z \in \Real{n+1} \without A$ satisfy $0 < \bm{\delta}_{A}^{F}(z) <
    r$. Then there exists a sequence $\{z_{i} : i \in \nat\} \subseteq \dmn
    \bm{\xi}_{A}^{F} $ which converges to $z$ and such that
    \begin{displaymath}
        0 < \bm{\delta}_{A}^{F}(z_{i}) \leq r
        \quad \text{and} \quad
        r_{A}^{F}(\bm{\psi}_{A}^{F}(z_{i}) ) \geq r \,.
    \end{displaymath}
    Noting that $ (\bm{\xi}_{A}^{F}(z_{i})) $ is a~bounded sequence, and passing to
    a~subsequence if necessary, we find $p \in A$ and $u \in \partial
    \oballF{F^*}01$ such that
    \begin{displaymath}
        \bm{\xi}_{A}^{F}(z_{i}) \to p, \qquad \bm{\nu}_{A}^{F}(z_{i}) \to u \,.
    \end{displaymath}
    In particular, $z = p + \bm{\delta}_{A}^{F}(z)u$. We find $t \in \Real{}$ such
    that $\bm{\delta}_{A}^{F}(z) < t < r $, and notice that
    \begin{equation}
        \label{eq:reach-hhl}
        \oballF{F^*}{\bm{\xi}^{F}_{A}(z_{i}) + t\bm{\nu}_{A}^{F}(z_{i})}{t} \cap A = \varnothing
        \quad \text{for $ i \geq 1 $} \,;
        \quad \text{hence,} \quad
        \oballF{F^*}{p + tu}{t} \cap A = \varnothing \,.
    \end{equation}
    This shows that $\bm{\delta}^F_A(p+tu) = t > \bm{\delta}_{A}^{F}(z)$;
    hence,~\ref{lem:Wulff-as-image}\ref{i:xi:delta-xi} yields $z \in \dmn
    \bm{\xi}^F_A$ and~$\bm{\xi}^F_A(z) = p$.
\end{proof}

\section{Heintze Karcher inequality}
\label{secmain}

Here we prove our main theorem~\ref{Heintze-Karcher}.

\begin{Remark}
    \label{rem:C1a-to-C2a}
    Let $F$ be an~elliptic integrand. Recalling~\cite[5.1.1]{Federer1969} we
    define $\Phi : \Real{n+1} \times \tbwedge_n \Real{n+1} \to \Real{}$,
    a~parametric integrand of degree~$n$ on~$\Real{n+1}$, by setting
    \begin{displaymath}
        \Phi(z,\xi) = F(* \xi)
        \quad \text{for $z \in \Real{n+1}$ and $\xi \in \tbwedge_n \Real{n+1}$} \,,
    \end{displaymath}
    where $*$ denotes the Hodge star operator associated with the standard
    scalar product and orientation on~$\Real{n+1}$;
    see~\cite[1.7.8]{Federer1969}. By~\ref{def:elliptic}
    and~\cite[5.1.2]{Federer1969} we see that~$\Phi$ is elliptic in the sense
    of~\cite[5.1.2]{Federer1969}. Moreover, if~$\Phi^{\S}$ is the nonparametric
    integrand associated with~$\Phi$ (see~\cite[5.1.9]{Federer1969}) and
    $\Phi^{\S}_z(\xi) = \Phi^{\S}(z,\xi)$ for $(z,\xi) \in \Real{n+1} \times
    \tbwedge_n \Real{n+1}$, then $\Der^2 \Phi^{\S}_z(\xi)$ is strongly elliptic
    in the sense of~\cite[5.2.3]{Federer1969} for all $(z,\xi) \in \Real{n+1}
    \times \tbwedge_n \Real{n+1}$ by~\cite[5.2.17]{Federer1969}.

    Let $W \subseteq \Real{n}$ be open and bounded, $V \in \Var{n}(W \times
    \Real{})$, $\mathbf{p} : \Real{n+1} \to \Real{n}$ and $\mathbf{q} :
    \Real{n+1} \to \Real{}$ be given by $\mathbf{p}(z_1,\ldots,z_{n+1}) =
    (z_1,\ldots,z_n)$ and $\mathbf{q}(z_1,\ldots,z_{n+1}) = z_{n+1}$ for
    $(z_1,\ldots,z_{n+1}) \in \Real{n+1}$. Assume $f : \Real{n} \to \Real{}$ is
    of class~$\cnt{1}$, and $V$ is the unit density varifold associated to the
    graph of~$f$, i.e., $V = \var{n}(\im (\mathbf{p}^* + \mathbf{q}^* \circ
    f))$.  Recalling~\cite[5.1.9]{Federer1969} we see that for any $\theta : W
    \to \Real{}$ of class~$\cnt{1}$ with compact support there holds
    \begin{displaymath}
        \delta_F V(\mathbf{q}^* \circ \theta \circ \mathbf{p})
        = \int \bigl\langle
        (0,\theta(x),\Der \theta(x))  ,\, \Der \Phi^{\S}(x,f(x),\Der f(x))
        \bigr\rangle \ud \Leb{n+1}(x) \,.
    \end{displaymath}

    Suppose $F$ is of class~$\cnt{3}$, $\alpha \in (0,1)$, $f$ is of
    class~$\cnt{1,\alpha}$, $\| \delta_F V \|$ is a~Radon measure, $\|\delta_F
    V\|_{\mathrm{sing}} = 0$, and $\mathbf{h}_F(V,\cdot) : \spt\|V\| \to
    \Real{n+1}$ is of class~$\cnt{0,\alpha}$. Define $\eta : W \to \Real{n+1}$
    and $H : W \to \Real{}$ by the formulas
    \begin{gather}
        \eta(x) = (\mathbf{q}^*(1) - \mathbf{p}^*(\grad f(x))) \cdot (1 + |\grad f(x)|^2)^{-1/2} 
        \\
        \text{and} \quad
        H(x)
        = - F( \eta(x) )
        \cdot \mathbf{q} \circ \mathbf{h}_F(V, (\mathbf{p}^* + \mathbf{q}^* \circ f)(x))
        \cdot \sqrt{1 + |\grad f|^2} 
    \end{gather}
    for $x \in W$. Note that $\eta(x)$ is the unit normal vector to the graph
    of~$f$ at~$(\mathbf{p}^* + \mathbf{q}^* \circ f)(x)$ for $x \in W$.
    Employing the area formula~\cite[3.2.3]{Federer1969} we get
    \begin{displaymath}
        \delta_F V(\mathbf{q}^* \circ \theta \circ \mathbf{p})
        = - \int_{\spt \|V\|} \theta( \mathbf{p}(z) ) \cdot \mathbf{q}( \mathbf{h}(V,z) ) \cdot F(\eta(\mathbf{p}(z)) \ud \Haus{n}(z)
        = \int_{W} \theta(x) \cdot H(x) \ud \Leb{n}(x)
    \end{displaymath}
    so that 
    \begin{multline}
        \label{eq:amc-holder}
        \int_W \bigl\langle
        (0,\theta(x),\Der \theta(x))  ,\, \Der \Phi^{\S}(x,f(x),\Der f(x))
        \bigr\rangle \ud \Leb{n+1}(x)
        = \int_W \theta(x) \cdot H(x) \ud \Leb{n}(x)
        \\ \text{for any $\theta \in \dspace{W}{\Real{}}$} \,.
    \end{multline}
    Since $H$ is of class~$\cnt{0,\alpha}$ a slight modification of the proof
    of~\cite[5.2.15]{Federer1969} shows that $f$ is actually of
    class~$\cnt{2,\alpha}$.

    To support the last claim recall the proof of~\cite[5.2.15]{Federer1969}
    with $2$, $n+1$, $n$, $\alpha$, $W$, $\Phi^{\S}$ in place of $q$, $n$, $m$,
    $\delta$, $U$, $G$. Using all the symbols defined therein, for any integer
    $\nu$ such that $\nu > 1/d$, define $R_{\nu} : \cball{b}{\rho-d} \to
    \Hom(\Real{n},\Real{})$ so that
    \begin{displaymath}
        \sigma \bullet R_{\nu}(x) = \int_0^1 \sigma(e_i) \cdot H(x + t e_i / \nu) \ud \Leb{1}(t)
        \quad \text{for $\sigma \in \Hom(\Real{n},\Real{})$ and $x \in \cball{b}{\rho-d}$} \,.
    \end{displaymath}
    Since, in our case, $f$ satisfies~\eqref{eq:amc-holder} rather
    than~\cite[5.2.15(4)]{Federer1969} the displayed equation in the middle of
    page 556 of~\cite{Federer1969}, i.e.,
    \begin{displaymath}
        \int_{\oball{b}{\rho-d}} \bigl\langle \Der f_{\nu}(x) \odot \Der \theta(x) ,\, A_{\nu}(x) \bigr\rangle \ud \Leb{n}(x)
        = \bm{(} P_{\nu} - Q_{\nu} ,\, \Der \theta \bm{)}_{b,\rho-d}
    \end{displaymath}
    turns into
    \begin{displaymath}
        \int_{\oball{b}{\rho-d}} \bigl\langle \Der f_{\nu}(x) \odot \Der \theta(x) ,\, A_{\nu}(x) \bigr\rangle \ud \Leb{n}(x)
        = \bm{(} P_{\nu} - Q_{\nu} - R_{\nu} ,\, \Der \theta \bm{)}_{b,\rho-d} \,.
    \end{displaymath}
    Clearly $R_{\nu}$ is $\alpha$-H{\"o}lder continuous with H{\"o}lder constant
    independent of~$\nu$ so all the estimates from the upper half of page 557
    of~\cite{Federer1969} hold in the modified case with an additional term
    coming from~$R_{\nu}$. Thus, one can still use~\cite[5.2.2]{Federer1969} to
    conclude that $\Der_i f$ is of class~$\cnt{1,\alpha}$; hence, $f$ is of
    class~$\cnt{2,\alpha}$.
\end{Remark}

\begin{Remark}
    \label{rem:reduced-and-essential-bdry}
    Suppose $E \subseteq \Real{n+1}$ is of finite perimeter. We recall that the
    reduced boundary (see~\ref{def:red-bdry}) and the essential boundary
    (cf.~\cite[4.5.12]{Federer1969} and~\cite[Def.~3.60]{AFP00}) of~$E$ are
    $\Haus{n}$~almost the same
    (see~\cite[Thm.~3.61]{AFP00}). Recalling~\cite[4.7]{Allard1972} we~deduce
    that $\mathbf{n}(E,\cdot)|\partial^{\ast} E : \partial^{\ast} E \to
    \Real{n+1}$ equals the negative of the \emph{generalised inner normal
      to~$E$} defined in~\cite[Def.~3.54]{AFP00}.
\end{Remark}

\begin{Definition}
    Let $A \subseteq \Real{n+1}$, $k \in \nat$, $\alpha \in [0,1]$. We say that
    $x \in A$ is a~\emph{$\cnt{k,\alpha}$-regular point of~$A$} if there exists
    an open set $W \subseteq \Real{n+1}$ such that $x \in W$ and $A \cap W$ is
    an~$n$-dimensional submanifold of class~$\cnt{k,\alpha}$ of $\Real{n+1}$.
    The set of all $\cnt{k,\alpha}$~regular points of~$A$ shall be called
    the~\emph{$\cnt{k,\alpha}$~regular part of~$A$}.
\end{Definition}

The strategy for the proof of our main theorem can be summarised in the
following way. First we replace the set~$E$ with an open set~$\Omega$ with the
same essential boundary using~\cite[2.2]{2019arXiv190805952S}. Using standard regularity
theory for codimension one varifolds with bounded anisotropic mean
curvature~\cite{AllardReg} and~\ref{rem:C1a-to-C2a} we deduce that
$\Haus{n}$~almost all of $\partial^* \Omega$ is $\cnt{2,\alpha}$~regular. On the
$\cnt{2,\alpha}$~regular part we can express the, variationally defined,
anisotropic mean curvature vector $\mathbf{h}_F(V,\cdot)$ as the trace of the
anisotropic second fundamental form as in~\ref{rem:Fmc-for-smooth}. However,
this does not reduce the problem to the smooth case because we have no control
of the singular set and we do not know how different parts of the regular set
are arranged in space. Therefore, we look at level-sets $S^F(C,r)$ of the
anisotropic distance function from $C = \Real{n+1} \without \Omega$. These sets
are easily seen to be~$\cnt{1,1}$ submanifolds of~$\Omega$ of dimension~$n$ so
we gain a~priori regularity. Nonetheless, we need to transfer the information we
have from $\Haus{n}$~almost all of~$\partial^* \Omega$ onto $S^F(C,r)$ and then
back to $\partial^* \Omega$. To this end we need the Lusin~(N) condition for
$\partial^* \Omega$ which follows from the weak maximum principle~\ref{mh}
and~\ref{Lusin property for (m,h) sets}. The Lusin~(N) property of $\partial
\Omega$ allows to represent $\Leb{n+1}$~almost all of~$\Omega$ as the image of
the map $\zeta(x,t) = x + t \mathbf{n}^F(C,x)$, where $x$ belongs to the
\emph{regular part of $\partial C$} and $t > 0$ is bounded by the first
eigenvalue of the anisotropic second fundamental form of $\partial \Omega$
at~$x$. At this point we apply the Montiel-Ros argument to estimate the measure
of~$\Omega$ and derive the Heintze-Karcher inequality.

Next, we deal with the equality case. First we note that the principal
curvatures of~$\partial \Omega$ must all equal $-n/H(z)$ for $z$ in the regular
part of~$\partial \Omega$. We use the Steiner formula~\ref{sufficient condition
  for positive reach} to deduce that $\reach^F C > n/c$. Then we let $0 < r <
n/c$ and we compute the principal curvatures of the level-set $S^F(C,r)$ using
the information we have on the regular part of~$\partial \Omega$. This and the
Lusin~(N) property show that $S^F(C,r)$ is totally umbilical
at~$\Haus{n}$~almost all points. Since we know that $S^F(C,r)$ is of
class~$\cnt{1,1}$, the $\Haus{n}$~almost everywhere information is enough to
apply~\ref{lem:umbilical} to see that $S^F(C,r)$ is a~finite union of boundaries
of Wulff shapes of radii $n/c - r$. After that, it is rather easy to see that
each connected component of~$\Omega$ must be a~Wulff shape of radius at
least~$n/c$. Since the perimeter of~$\Omega$ is finite we see also that there
may be at most finitely many connected components of~$\Omega$.

\begin{Theorem}
    \label{Heintze-Karcher}
    Suppose 
    \begin{gather}
        \text{$F$ is an~elliptic integrand of class~$\cnt{3}$} \,,
        \quad
        n \ge 2 \,,
        \quad
        c \in (0,\infty) \,,
        \\
        \text{$E \subseteq \Real{n+1} $ is a set of finite perimeter} \,,
        \quad
        \Haus{n}\bigl( \Clos(\partial^{\ast} E) \without \partial^{\ast}E \bigr) = 0 \,,
        \\
        V = \var{n}(\partial^{\ast}E) \in \RVar{n}(\Real{n+1}) \,,
        \quad
        \| \delta_F V \|_{\mathrm{sing}} = 0 \,,
        \\
        \text{$\mathbf{h}_F(V,\cdot)|K$ is of class~$\cnt{0,\alpha}$ for each compact subset $K$ of the $\cnt{1,\alpha}$~regular part of~$\spt \|V\|$} \,,
        \\
        0  < - \overline{\mathbf{h}}_F(V,x) \bullet \mathbf{n}(E,x) \le c
        \quad \text{for $\|V\|$~almost all $x$}
        \,.
    \end{gather}
    Then 
    \begin{equation}
        \label{Heintze-Karcher:eq1}
        \Leb{n+1}(E) \leq \frac{n}{n+1} \int_{\partial E} \frac{1}{| \mathbf{h}_F(V,x) |} \ud \Haus{n}(x)
    \end{equation}
    and equality holds if and only if here there exists a finite union $\Omega$
    of disjoint open Wulff shapes with radii not smaller than $n/c$ such that
    $\Leb{n+1}\bigl( (\Omega \without E) \cup (E \without \Omega) \bigr) = 0$.
\end{Theorem}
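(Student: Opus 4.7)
The plan is to follow the roadmap already sketched in the discussion preceding the statement. First I would invoke \cite[2.2]{2019arXiv190805952S} to replace $E$ by an~open set $\Omega$ with $\Haus{n}(\partial^{\ast}\Omega \bigtriangleup \partial^{\ast}E) = 0$ and $\Leb{n+1}(\Omega \bigtriangleup E) = 0$; this is allowed because the closure assumption forces $\partial^{\ast}E$ to be essentially closed in~$\Real{n+1}$. Since $\|\delta_F V\|$ is Radon and $F(\overline{\mathbf{h}}_F(V,\cdot))$ is bounded, the anisotropic version of Allard's regularity theorem (cf.~\cite{AllardReg,DeRosa}) yields that $\Haus{n}$~almost every point of $\partial^{\ast}\Omega$ is a~$\cnt{1,\alpha}$~regular point, and by~\ref{rem:C1a-to-C2a} together with the H{\"o}lder hypothesis on~$\mathbf{h}_F$ these points are in fact $\cnt{2,\alpha}$~regular, so~\ref{rem:Fmc-for-smooth} and~\ref{rem:F-princ-curc-and-mc} give a~pointwise expression of~$\overline{\mathbf{h}}_F(V,\cdot)$ as a~trace of the anisotropic second fundamental form.

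The key reduction is to work with the anisotropic distance $\bm{\delta}^F_C$ from $C = \Real{n+1} \without \Omega$. Applying~\ref{mh} to~$V$, the set $C$ (or rather $\partial \Omega = \spt \|V\|$) is an $(n,c)$-subset of~$\Real{n+1}$ with respect to~$F$, so by~\ref{Lusin property for (m,h) sets} and~\ref{anisotropic Lusin} the normal bundle $N^F(C)$ satisfies the Lusin~(N) condition. Combining this with~\ref{lem:xi-diff-ae}\ref{i:xi:dmn-xi-ae}\ref{i:xi:direction} and a~coarea-type computation along the flow $\zeta(x,u,t) = x + tu$, $(x,u) \in N^F(C)$, one covers $\Leb{n+1}$~almost all of $\Omega$ by points of the form $x + t \mathbf{n}^F(C,x)$ with $x$ running over the $\cnt{2,\alpha}$~regular part of~$\partial^{\ast}\Omega$ and $0 < t < r^F_C(x,\mathbf{n}^F(C,x))$. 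On this set the Jacobian of~$\zeta$ factorises through the $F$-principal curvatures $\kappa_1^F(x) \le \ldots \le \kappa_n^F(x)$ of~$\partial \Omega$ at~$x$, and the constraint $t < 1/\kappa_n^F(x)$ combined with the AM-GM inequality yields, exactly as in the Montiel--Ros argument revisited in~\cite{DM00pub},
\begin{equation*}
    \Leb{n+1}(\Omega) \le \frac{n}{n+1} \int_{\partial^{\ast}\Omega} \frac{F(\mathbf{n}(\Omega,x))}{H(x)} \ud \Haus{n}(x),
\end{equation*}
where $H = -\overline{\mathbf{h}}_F(V,\cdot) \bullet \mathbf{n}(E,\cdot)$; this is~\eqref{Heintze-Karcher:eq1} after one recognises $F(\mathbf{n}(E,x))/H(x) = 1/|\mathbf{h}_F(V,x)|$ through~\ref{def:F-mean-curvature}.

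For the equality case, tracing the AM-GM step shows that $\Haus{n}$~almost every $x$ in the regular part of~$\partial^{\ast}\Omega$ has $\kappa_1^F(x) = \ldots = \kappa_n^F(x) = H(x)/n$ and that $r^F_C(x,\mathbf{n}^F(C,x)) \ge n/H(x) \ge n/c$. The Steiner-type identity produced by the Montiel--Ros computation also gives a~polynomial representation of $\Leb{n+1}(\{ \bm{\delta}^F_C \le t\})$ for $0 < t < n/c$, so~\ref{sufficient condition for positive reach} upgrades this to $\reach^F C \ge n/c$. Fixing $0 < r < n/c$, \ref{cor:level-set-of-class-11} ensures that $M := S^F(C,r)$ is a~closed $\cnt{1,1}$~submanifold of dimension~$n$; pushing forward the umbilicity information from $\partial^{\ast}\Omega$ along the (bilipschitz) anisotropic normal flow and using the Lusin~(N) property to control the exceptional set, one obtains that $M$ is totally umbilical in the almost everywhere sense needed by~\ref{lem:umbilical}. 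That lemma then forces each connected component of~$M$ to be the boundary of a~Wulff shape; letting $r \downarrow 0$ identifies every connected component of~$\Omega$ with an~open Wulff shape of radius $\ge n/c$, and finiteness of the perimeter bounds the number of components.

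The main obstacle is the last paragraph: transporting the pointwise umbilicity established $\Haus{n}$-almost everywhere on the possibly very irregular set $\partial^{\ast}\Omega$ to an almost everywhere statement on the $\cnt{1,1}$~level set $S^F(C,r)$ in a way that genuinely applies~\ref{lem:umbilical}. This is precisely where the Lusin~(N) property of~$N^F(C)$ proved in Section~\ref{lusi} is indispensable, since without it null exceptional sets on~$\partial^{\ast}\Omega$ could blow up to positive measure on the level sets and destroy the umbilicity argument; the remaining difficulty is bookkeeping the radii across the normal flow to conclude that components of~$\Omega$ are genuine Wulff shapes rather than merely locally umbilical pieces.
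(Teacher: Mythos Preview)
Your proposal is correct and follows essentially the same route as the paper: replacement of $E$ by an open $\Omega$, upgrading of $\cnt{1,\alpha}$ regularity to $\cnt{2,\alpha}$ via~\ref{rem:C1a-to-C2a}, the Lusin~(N) property of $N^F(C)$ obtained from~\ref{mh} and~\ref{Lusin property for (m,h) sets}, the Montiel--Ros computation on the map $\zeta$, and in the equality case the Steiner-type polynomial identity feeding into~\ref{sufficient condition for positive reach} and then~\ref{lem:umbilical} applied to $S^F(C,r)$. Two small points to align with the paper: the bound coming from~\ref{mh} is $(n,c\,C(F))$ rather than $(n,c)$, since $F(\overline{\mathbf{h}}_F(V,\cdot)) = H(\cdot)\,F(\mathbf{n}(\Omega,\cdot))$; and in the final step the paper does not let $r \downarrow 0$ but fixes a single $0<r<n/c$, uses $\bm{\xi}^F_C(S^F(C,r)) = \partial C$ together with $\reach^F C \ge n/c$ to see that $\partial \cballF{F^*}{z}{s+r}\subseteq \partial C$ for each component $\partial \cballF{F^*}{z}{s}$ of $S^F(C,r)$, and concludes via constancy that each component of $\Omega$ is a Wulff shape.
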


\begin{proof}
    First we employ~\cite[2.2]{2019arXiv190805952S} to obtain an open set $\Omega
    \subseteq \Real{n+1}$ such that
    \begin{displaymath}
        \Leb{n+1}\bigl( (\Omega \without E) \cup (E \without \Omega) \bigr) = 0
        \quad \text{and} \quad
        \Haus{n}(\partial \Omega \without \partial^* \Omega) = 0 \,.
    \end{displaymath}
    Directly from the definition (see~\cite[4.5.12, 4.5.11]{Federer1969})
    it~follows that the essential boundaries of~$\Omega$ and~$E$ coincide;
    hence, recalling~\ref{rem:reduced-and-essential-bdry}, we obtain $V =
    \var{n}(\partial^{\ast} \Omega)$. We shall consider~$\Omega$ instead of~$E$
    in the sequel. Let us define
    \begin{gather}
        H : \spt \|V\| \to [0,c]
        \quad \text{so that} \quad
        H(x) = - \overline{\mathbf{h}}_F(V,x) \bullet \mathbf{n}(E,x)
        \quad \text{for $\|V\|$~almost all~$x$} \,,
        \\
        C = \Real{n+1} \without \Omega \,,
        \quad
        Q = \partial C \cap \bigl\{ x : \text{$x$ is a $\cnt{2,\alpha}$-regular point of $\partial C$} \bigr\} \,.
    \end{gather}
    Note that $\partial^{\ast}C = \partial^{\ast}\Omega$,
    $\mathbf{n}^{F}(C,\cdot) = - \mathbf{n}^{F}(\Omega,\cdot)$, and
    $H(x) = F(\mathbf{n}(E,x)) |\mathbf{h}_F(V,x)|$ for $\|V\|$~almost
    all~$x$.

    \medskip

    Claim 1: \emph{If $x \in Q$, $y \in \Omega$, and $\bm{\xi}^F_{C}(y) = x$ (in
      other words: $y \in \Omega \cap (\bm{\xi}^F_{C})^{-1}(Q)$), then}
    \begin{displaymath}
        0 \le \frac 1n H(x)
        \le - \kappa_{Q,1}^F(\bm{\psi}_C^F(y))
        \le \bm{\delta}^F_C(y)^{-1} \,.
    \end{displaymath}

    \medskip
    
    We clearly have
    \begin{displaymath}
        \oballF{F^*}{y}{\bm{\delta}^F_C(y)} \cap C = \varnothing 
        \quad \text{and} \quad
        \partial \oballF{F^*}{y}{\bm{\delta}^F_C(y)} \cap C = \{ x \} \,;
    \end{displaymath}
    hence, recalling~\ref{cor:wulff-shape-princ-curvatures},
    \ref{rem:F-princ-curc-and-mc}, and that $x$ is a~$\cnt{2,\alpha}$-regular
    point of $\partial C$, wee see that
    \begin{displaymath}
        \frac 1n H(x)
        \le - \kappa_{Q,1}^F(\bm{\psi}_C^F(y))
        \le - \kappa_{\partial \oballF{F^*}{y}{\bm{\delta}^F_C(y)},1}^F(\bm{\psi}_C^F(y))
        = \bm{\delta}^F_C(y)^{-1} 
    \end{displaymath}
    and the claim is proven.
    
    \medskip
    
    Claim 2: \emph{$\Leb{n+1}(\Omega \without (\bm{\xi}_{C}^F)^{-1}(Q)) = 0$.}

    \medskip

    Note that $F(\overline{\mathbf{h}}_F(V,x)) = H(x) F(\mathbf{n}(\Omega,x))$
    for $\|V\|$ almost all $x$ so applying Lemma~\ref{mh} we conclude that
    $\partial \Omega$ is an $(n,c\,C(F))$ subset of~$\Real{n+1}$. It follows by
    Theorem \ref{Lusin property for (m,h) sets} that $\Haus{n}(N(\partial
    \Omega)|S) =0$ whenever $S \subseteq \Real{n+1}$ satisfies $ \Haus{n}(S) =0
    $. Combining this with Lemma \ref{anisotropic Lusin}, we deduce that
    $\Haus{n}(N^F(\partial \Omega)|S) =0$ whenever $ S \subseteq \Real{n+1} $
    satisfies $ \Haus{n}(S) =0 $. Since $ N^F(C) \subseteq N^F(\partial \Omega)
    $, one readily infers that $ \Haus{n}(N^F(C)|S) =0 $ whenever $ S \subseteq
    \Real{n+1} $ satisfies $\Haus{n}(S) = 0$. We also observe that for $\|V\|$
    almost all~$z$ there exists a~radius~$r > 0$ such that $V$ satisfies all the
    assumption of \cite[The Regularity Theorem, pp. 27-28]{AllardReg} inside
    $\oball zr$. This implies that there exists $\alpha \in (0,1)$ such that for
    $\Haus{n}$ almost all $z \in \partial C$ there exists an open set $G \subset
    \mathbb R^{n+1}$ with $z \in G$ and such that $\partial C \cap G$ coincides
    with a~rotated graph of some function $f : \Real{n} \to \Real{}$ of
    class~$\cnt{1,\alpha}$. However, employing~\ref{rem:C1a-to-C2a}, we see that
    $f$ is actually of class~$\cnt{2,\alpha}$.  Therefore,
    \begin{equation}
        \label{Heintze-Karcher:eq2}
        \Haus{n}(\partial C \without Q) = 0
        \quad \text{and} \quad
        \Haus{n}(N^F(C)|(\partial C \without Q )) = 0 \,.
    \end{equation}
    Since $ \bm{\psi^F}_{C}\big(S^F(C,r) \cap (\dmn \bm{\xi}^F_{C}) \without
    (\bm{\xi}^F_{C})^{-1}(Q) \big) \subseteq N(C)|(\partial C \without Q ) $ for
    every $ r > 0 $, we get
    \begin{displaymath}
        \Haus{n}\big(
        \bm{\psi^F}_{C}\big(
        S^F(C,r) \cap (\dmn \bm{\xi}^F_{C}) \without (\bm{\xi}^F_{C})^{-1}(Q)
        \big)
        \big) = 0
        \quad \textrm{for every $ r > 0 $} \,.
    \end{displaymath}
    Moreover, we have $ \big(\bm{\psi^F}_{C}|(S^F(C,r) \cap \dmn \bm{\xi}^F_{C}
    \without C)\big)^{-1} \in \cnt{1}$ and we deduce that
    \begin{displaymath}
        \Haus{n}\big( S^F(C,r) \cap (\dmn \bm{\xi}^F_{C}) \without (\bm{\xi}^F_{C})^{-1}(Q)\big) =0 \quad \textrm{for every $ r > 0 $.}
    \end{displaymath}
    Combining~\ref{lem:xi-diff-ae}\ref{i:xi:dmn-xi-ae}\ref{i:xi:delta-1-lip}\ref{i:xi:xi-by-grad-delta}
    with the coarea formula~\cite[3.2.22]{Federer1969}, we get
    \begin{displaymath}
        \Haus{n}(S^F(C,r) \without \dmn  \bm{\xi}^F_{C}   ) = 0
        \quad \text{for $\Leb{1}$ almost all $r > 0$} \,.
    \end{displaymath}
    From~\ref{lem:xi-diff-ae}\ref{i:xi:xi-by-grad-delta} it follows that
    $F(\grad \bm{\delta}_{C}^{F}(x)) = 1$; hence, recalling~\ref{def:CF},
    we~obtain $|\grad \bm{\delta}_{C}^{F}(x)|\geq \frac{1}{C(F)}$. Using the
    coarea formula, we compute
    \begin{multline}
        \frac{1}{C(F)}\Leb{n+1}(\Omega \without (\bm{\xi}^F_{C})^{-1}(Q))
        \\
        \leq \int_{\Omega \without (\bm{\xi}^F_{C})^{-1}(Q)} |\grad \bm{\delta}_{C}^{F}(x)| dx
        = \int_{0}^{\infty}\Haus{n}(S^F(C,r) \without (\bm{\xi}^F_{C})^{-1}(Q)) \, dr
        = 0 \,.
    \end{multline}
    In particular we get that $\Leb{n+1}(\Omega \without
    (\bm{\xi}^F_{C})^{-1}(Q)) = 0$, which settles Claim~2.

    We define
    \begin{gather}
        Z = (Q \times \Real{}) \cap \bigl\{ (x,t) :  0 < t \le -\kappa^F_{Q,1}(x,\mathbf{n}^F(C,x))^{-1} \bigr\} \,,
        \\
        \zeta : Z \to \Real{n+1} \,, \quad \zeta(x,t) = x + t \mathbf{n}^{F}(C,x) \,.
    \end{gather}
    For brevity of the notation we also set
    \begin{displaymath}
        J_{n+1}\zeta(x,t) = \| \tbwedge_{n+1} (\Haus{n+1} \restrict Z,n+1) \ap \Der \zeta(x,t) \| 
        \quad \text{whenever $(x,t) \in Z$} \,.
    \end{displaymath}

    \medskip
    
    Claim 3: \emph{There holds}
    \begin{equation}
        \label{utilissima}
        J_{n+1}\zeta(x,t) = F(\mathbf{n}(C,x)) \prod_{i=1}^n \bigl( 1 + t\,\kappa^F_{Q,i}(x,\mathbf{n}^F(C,x) ) \bigr)
        \quad \textit{for $(x,t) \in Z$} \,.
    \end{equation}

    \medskip
    
    Let $(x,t) \in Z$ and $u =
    \mathbf{n}^F(C,x)$. Recalling~\ref{rem:two-self-adjoint} we find a~basis
    $\tau_1(x), \ldots, \tau_n(x)$ of~$\Tan(Q,x)$ consisting of eigenvectors
    of~$\Der ( \mathbf{n}^F(C,\cdot) )(x)$ and such that
    \begin{gather}
        \bigl\langle \tau_i(x) ,\, \Der \mathbf{n}^F(C,\cdot)(x) \bigr\rangle
        = \kappa^F_{Q,i}(x,u)\,\tau_i(x)
        \quad \text{for $i \in \{1,2,\ldots,n\}$} \,,
        \\
        |\tau_1(x) \wedge \cdots \wedge \tau_n(x)| = 1 \,.
    \end{gather}
    Noting that $\Tan(Z, (x,t)) = \Tan(Q,x) \times \mathbf R$,
    \begin{gather}
        \bigl\langle (0,1) ,\, \Der \zeta(x,t) \bigr\rangle = \mathbf{n}^F(C,x) = \grad F( \mathbf{n}(C,x) ) \,,
        \\
        \bigl\langle (\tau_{i}(x),0) ,\, \Der \zeta(x,t) \bigr\rangle 
        = (1 + t \kappa^F_{Q,i}(x,u))\,\tau_i(x)
        \quad \text{for $i \in \{ 1, \ldots , n \}$} \,,
    \end{gather}
    we compute
    \begin{multline}
        J_{n+1}\zeta(x,t)
        = \prod_{i=1}^{n}(1+t \kappa^F_{Q,i}(x,u))\, | \mathbf{n}^F(C,x) \wedge \tau_1(x) \wedge \cdots \wedge \tau_n(x) |
        \\ 
        =  \grad F(\mathbf{n}(C,x)) \bullet \mathbf{n}(C,x) \,
        \prod_{i=1}^{n}(1+t \kappa^F_{Q,i}(x,u))\,
        | \mathbf{n}(C,x) \wedge \tau_1(x) \wedge \cdots \wedge \tau_n(x) |
    \end{multline}
    and Claim~3 follows from~\ref{lem:Wulff-as-image}\ref{i:W:F-from-G} and~\cite[1.7.5]{Federer1969}.

    \medskip
    
    Claim 4: \emph{Inequality \eqref{Heintze-Karcher:eq1} holds.}

    \medskip

    Employing Claim~1 and Claim~2 we see that $\Leb{n+1}( \Omega \without
    \zeta(Z) ) = 0$. Hence, using the area formula and then Claim~3, we~get
    \begin{multline}
        \label{primo}
        \Leb{n+1}(\Omega) \le \Leb{n+1}(\zeta(Z))
        \le \int_{\zeta({Z})} \Haus{0}(\zeta^{-1}(y)) \ud \Leb{n+1}(y)
        = \int_{Z} J_{n+1} \zeta \ud \Haus{n+1}
        \\
        = \int_{Q} F(\mathbf{n}(C,x))
        \int_0^{-1/\kappa^F_{Q,1}(x,\mathbf{n}^F(C,x))} \prod_{i=1}^n \bigl( 1 + t\kappa^F_{Q,i}( x, \mathbf{n}^F(C,x) ) \bigr)
        \ud t  \ud \Haus{n}(x)
        \,.
    \end{multline}
    Using again Claim~1, then the standard inequality between the arithmetic and
    the geometric mean, and finally~\ref{rem:F-princ-curc-and-mc}, we obtain
    \begin{multline}
        \label{anisotropic montiel ros}
        \Leb{n+1}(\Omega)
        \le \int_{Q} F(\mathbf{n}(C,x))
        \int_0^{-1/\kappa^F_{Q,1}(x,\mathbf{n}^F(C,x))} \Bigl( \frac 1n \sum_{i=1}^n \bigl( 1 + t \kappa^F_{Q,i}( x, \mathbf{n}^F(C,x) ) \bigr) \Bigr)^n
        \ud t \ud \Haus{n}(x)
        \\
        \le \int_{Q} F(\mathbf{n}(C,x)) \int_0^{n/H(x)} \Bigl( 1 - t\frac{H(x)}{n} \Bigr)^n \ud t \ud \Haus{n}(x) \\
        =\frac{n}{n+1}\,\int_{\partial\Omega} \frac{F(\mathbf{n}(C,x))}{H(x)} \ud \Haus{n}(x)\,,
    \end{multline}
    which implies \eqref{Heintze-Karcher:eq1} by~\ref{def:F-mean-curvature}.

    We assume now that equality holds in \eqref{Heintze-Karcher:eq1}. Since the
    chains of inequalities \eqref{primo} and \eqref{anisotropic montiel ros}
    become chains of equalities, we deduce that
    \begin{gather}
        \label{Heintze-Karcher:eq4}
        \Leb{n+1}(\zeta({Z}) \without \Omega) = 0 \,,
        \\
        \label{Heintze-Karcher:eq5}
        \Haus{0}(\zeta^{-1}(y)) = 1
        \quad \text{for $ \Leb{n+1} $ almost all $ y \in \zeta(Z)$} \,,
        \\
        \label{Heintze-Karcher:eq6}
        -\kappa^F_{Q,j}(z, \mathbf{n}^F(C,z))^{-1} = \frac{n}{H(z)}
        \quad \text{for $\Haus{n}$ almost all $z \in Q$ and all $j = 1, \ldots, n$} \,.
    \end{gather}
    Our goal is to prove that $ \Omega$ is a~finite union of disjoint open Wulff
    shapes. We need two preliminary claims, whence the conclusion will be easily
    deduced.

    Claim 5: \emph{$ \reach^{F} C \geq n/c $.}

    Recall that $H(z) \le c$ for $\Haus{n}$ almost all $z \in \partial C$. Let
    $0 < \rho < n/c$ and
    \begin{displaymath}
        Q_{\rho} = Q \cap \{ z : \rho < -\kappa^F_{Q,1}(z,\mathbf{n}^F(C,z))^{-1}  \}.
    \end{displaymath}
    It follows from \eqref{Heintze-Karcher:eq2}, \eqref{Heintze-Karcher:eq6},
    and the fact that $\partial C$ is an $(n,c\,C(F))$ subset of~$\Real{n+1}$,
    that
    \begin{displaymath}
        \Haus{n}(\partial C \without Q_{\rho}) = 0
        \quad \text{and} \quad
        \Haus{n}(N(C)| \partial C \without Q_{\rho}) = 0 \,;
    \end{displaymath}
    hence, we argue as in Claim 2 to conclude that $\Leb{n+1}(\Omega \without
    \bm{\xi}_{C}^{-1}(Q_{\rho})) = 0$. We define
    \begin{displaymath}
        C_{\rho}^F = \{ z : \bm{\delta}^F_{C}(z) \leq \rho  \}
        \quad \text{and} \quad Z_{\rho} = Q_{\rho} \times \{ t : 0 < t \leq \rho  \}
    \end{displaymath}
    and we notice that
    \begin{displaymath}
        \bm{\xi}_{C}^{-1}(Q_{\rho}) \cap \Omega \cap C_{\rho}^F \subseteq \zeta(Z_{\rho}) \subseteq C_{\rho}^F \,,
        \quad
        \Leb{n+1}(\Omega \cap C_{\rho}^F \without \zeta(Z_{\rho})) = 0 \,.
    \end{displaymath}
    Let $ f : \Real{n+1} \times \sphere{n} \rightarrow \Real{} $ be a Borel
    measurable function with compact support. Then we use Claim~1,
    \eqref{Heintze-Karcher:eq4}, \eqref{Heintze-Karcher:eq5},
    \eqref{Heintze-Karcher:eq6}, and \cite[5.4]{2017arXiv170801549S} to compute
    \begin{align}
        \int_{\Omega \cap C_{\rho}^F} &f(\bm{\psi^F}_{C}(y)) \ud \Leb{n+1}(y)
        = \int_{\Omega \cap \zeta(Z_{\rho})}f(\bm{\psi^F}_{C}(y)) \ud \Leb{n+1}(y) \\
        &= \int_{\Omega \cap \zeta(Z_{\rho})}\int_{\zeta^{-1}(y)}f(z, \mathbf{n}^{F}(C,z)) \ud \Haus{0}(z)\ud \Leb{n+1}(y)\\
        &= \int_{\zeta(Z_{\rho})}\int_{\zeta^{-1}(y)}f(z, \mathbf{n}^{F}(C,z)) \ud \Haus{0}(z)\ud \Leb{n+1}(y)\\
        &= \int_{Z_{\rho}} J_{n+1} \zeta(z,t)\, f(z, \mathbf{n}^{F}(C,z))\ud \Haus{n+1}(z, t) \\
        &= \int_{Q_{\rho}}f(z, \mathbf{n}^{F}(C,z))F(\mathbf{n}(C,z))\int_{0}^{\rho}\Big(1-t\frac{\,H(z)}{n}\Big)^{n}\ud t \ud \Haus{n}(z) \\
        &= \int_{\partial C}f(z, \mathbf{n}^{F}(C,z))F(\mathbf{n}(C,z))\int_{0}^{\rho}\Big(1-t\frac{\,H(z)}{n}\Big)^{n}\ud t \ud \Haus{n}(z)\\
        &= \sum_{i=1}^{n+1}c_{i}(f)\rho^{i},
    \end{align}
    where, for $ i = 1, \ldots , n+1 $,
    \begin{displaymath}
        c_{i}(f) = \Big(-\frac{1}{n}\Big)^{i-1} \frac{n!}{i!(n-i+1)!}
        \int_{\partial C} f(z, \mathbf{n}^{F}(C,z)) F(\mathbf{n}(C,z)) H(z)^{i-1}
        \ud \Haus{n}(z) \,.
    \end{displaymath}
    Therefore, $\reach^{F} C \geq n /c$ by Theorem~\ref{sufficient condition for
      positive reach}.

    \medskip

    Claim 6: \emph{Let $ 0 < r < n/c \leq \reach^{F} C $. Then $S^F(C,r)$ is a
      finite union of Wulff shapes of radii not smaller than $c^{-1}(n-rc)$.}

    \medskip

    Since $\reach^{F} C \ge n/c $ we employ~\ref{cor:level-set-of-class-11} to
    find that $S^F(C,r)$ is a submanifold of~$\Real{n+1}$ of dimension~$n$ of
    class~$\cnt{1,1}$. We define
    \begin{displaymath}
        C_r = \Real{n+1} \cap \{ z : \bm{\delta}^F_C(z) < r \} \,.
    \end{displaymath}
    Noting that $\mathbf{n}^{F}(C_r,\cdot)|S^F(C,r) = \grad F \circ
    \mathbf{n}(C_r,\cdot)|S^F(C,r)$ and $\grad F$ is a~$\cnt{1}$ function,
    we~deduce that $\mathbf{n}^{F}(C_r,\cdot)|S^F(C,r)$ is a~Lipschitzian vector
    field. We define
    \begin{displaymath}
        T = Q \cap \bigl\{ z :  \kappa_{Q, j}^F(z)= -H(z)/n \; \; \text{for $ j = 1, \ldots , n $}  \bigr\} \,,
    \end{displaymath}
    and we notice that $ \Haus{n}(\partial C \without T) = 0$ by
    \eqref{Heintze-Karcher:eq2} and~\eqref{Heintze-Karcher:eq6}; then the
    Lusin~(N) condition implies
    \begin{equation}
        \label{eq:preim-T-SFCr}
        \Haus{n}(S^F(C,r) \without (\bm{\xi}^F_{C})^{-1}(T)) = 0 \,.
    \end{equation}
    Recalling~\ref{lem:xi-diff-ae}\ref{i:xi:direction} we see that
    \begin{gather}
        \mathbf{n}^F(C_r,z)
        = \frac{z - \bm{\xi}^F_C(z)}{r}
        = \grad F \bigl( \mathbf{n}(C,\xi(z)) \bigr)
        = \mathbf{n}^F(C,\cdot) \circ \bm{\xi}^F_C(z)
        \quad \text{whenever $z \in S^F(C,r)$} \,.
    \end{gather}
    Let us set
    \begin{displaymath}
        \sigma = \bm{\xi}^F_C|S^F(C,r)\cap (\bm{\xi}^F_{C})^{-1}(T)
        \quad \text{and} \quad
        \varphi = \zeta|T \times \{r\} \,.
    \end{displaymath}
    Observe that if $x \in T$, then $z = x + r\mathbf{n}^F(C,x) \in S^F(C,r)$,
    $\bm{\xi}^F_C(z) = x$, and $\Tan(S^F(C,r),z) = \Tan(T,x)$; hence, $\sigma =
    \varphi^{-1}$ and we~get
    \begin{gather}
        \label{eq:lip-vaphi}
        \bigl\langle u ,\, \Der \varphi(x) \bigr\rangle
        = (1 - r H(x)/n) u
        \quad \text{for $x \in T$ and $u \in \Tan(T,x)$} \,,
        \\
        \bigl\langle u ,\, \Der \sigma(z) \bigr\rangle
        = (1 - r H(\bm{\xi}^F_C(z))/n)^{-1} u
        \quad \text{for $z \in \dmn \sigma$ and $u \in \Tan(T, \bm{\xi}^F_C(z))$} \,,
        \\
        \Der \mathbf{n}^F(C_r,\cdot)(z)u = \frac{-H(\bm{\xi}^F_C(z))}{n - r H(\bm{\xi}^F_C(z))} u
        \quad \text{for $\Haus{n}$~a.a. $z \in S^F(C,r)$ and $u \in \Tan(T, \bm{\xi}^F_C(z))$} \,.
    \end{gather}
    Employing~\ref{lem:umbilical} we conclude that $S^F(C,r)$ is a~union of at
    most countably many boundaries of Wulff shapes with radii not smaller than
    $c^{-1}(n-rc)$. Since $E$ has finite perimeter we have $\Haus{n}(\partial
    \Omega) < \infty$ so using~\eqref{eq:lip-vaphi} and~\eqref{eq:preim-T-SFCr}
    we conclude that $\Haus{n}(S^F(C,r)) < \Haus{n}(\partial^{\ast} \Omega) <
    \infty $ and Claim~6 follows.

    \medskip

    We are now ready to conclude the proof. We notice from
    \cite[4.20]{Federer1959} that
    \begin{displaymath}
        \partial C = \{ x : \dim \Nor(C,x) \geq 1 \}
    \end{displaymath}
    and by Lemma \ref{normal bundel isotropo vs anisotropo}, we also get that
    \begin{displaymath}
        \partial C = \{ x : \dim \Nor^F(C,x) \geq 1 \} \,.
    \end{displaymath}
    We claim that 
    \begin{equation}
        \bm{\xi}^F_{C}(S^F(C,r)) = \partial C \quad  \text{ for $ 0 < r < n/c $} \,.
    \end{equation}
    Indeed, since $0 < r < \reach^F C$, for every $x \in \partial C$ there
    exists $\nu \in \Nor^F(C,x) $ such that $x + r\nu \in S^F(C,r)\cap \dmn
    \bm{\xi}^F_{C}$ and consequently $\bm{\xi}^F_{C}(x + r\nu)=x$. We deduce
    that $\partial C \subseteq \bm{\xi}^F_{C}(S^F(C,r))$. The~reverse inclusion
    is trivial.

    Consider a connected component~$S_1$ of~$S^F(C,r)$. By Claim~6 we obtain $s
    \ge n/c - r$ and $z \in \Real{n+1}$ such that $S_1=\partial
    \cballF{F^*}{z}{s}$.  Observe that
    \begin{displaymath}
        S^F(\Real{n+1} \without \cballF{F^*}{z}{s+r},r) = S_1 \,;
    \end{displaymath}
    hence,
    \begin{displaymath}
        \partial \cballF{F^*}{z}{s+r} = \bm{\xi}^F_{C}(S_1) \subseteq \partial C 
    \end{displaymath}
    and, using, e.g., the constancy theorem~\cite[4.1.7]{Federer1969}, we deduce
    that $\oballF{F^*}{z}{s+r}$ is a~connected component of~$\Omega$. Since
    $S_1$ was chosen arbitrarily we see that $\Omega$ must be a finite union of
    open disjoint Wulff shapes of radii at least~$n/c$.
\end{proof}

\begin{Remark}
	This theorem extends to sets of finite perimeter the analogous result for smooth boundaries in \cite[Theorem 4]{helimage}.
\end{Remark}

We use now Theorem \ref{Heintze-Karcher} to study the critical points of the anisotropic surface area for a given volume. 

\begin{Definition}[\protect{cf. \cite[4.1]{Allard1972}}]
	A smooth function $ h : (-\epsilon, \epsilon) \times \Real{n+1} \rightarrow \Real{n+1} $ is called \emph{local variation} if and only if
	\begin{enumerate}
	\item $ h(0,x) = x $ for every $ x \in \Real{n+1} $,
	\item $ h(t, \cdot) : \Real{n+1} \rightarrow \Real{n+1} $ is a diffeomorphism for every $ t \in (-\epsilon, \epsilon) $,
	\item the set $ \{x : h(t,x) \neq x \; \textrm{for some $ t \in (-\epsilon, \epsilon) $}   \} $ has compact closure in $ \Real{n+1} $.
	\end{enumerate}

We set $ h_{t} = h(t, \cdot) $ and $ \overset{.}{h}_{t}(x) = \lim_{u \to 0} u^{-1}(h_{t+u}(x) - h_{t}(x)) $ for every $ (t,x) \in (-\epsilon, \epsilon) \times \Real{n+1} $.
\end{Definition}

Given an integrand $ F $ we define \emph{the $ F $-perimeter functional} as 
\begin{equation}\label{F perimeter}
\mathcal{P}_{F}(E) = \int_{\partial^{\ast}E} F(\mathbf{n}(E,x))\, \ud\Haus{n}x
\end{equation}
for every $ E \subseteq \Real{n+1} $ with finite perimeter, and \emph{the $ F $-isoperimetric functional} as
\begin{displaymath}
\mathcal{I}_{F}(E)= \frac{\mathcal{P}_{F}(E)^{n+1}}{\Leb{n+1}(E)^{n}}
\end{displaymath}
for every $E \subseteq \Real{n+1}$ with finite perimeter and finite volume. 

\begin{Corollary}\label{isoperimetric functional}
Let $ E \subseteq \Real{n+1} $ be a set of finite perimeter and finite volume such that 
\begin{equation*}
\Haus{n}(\Clos(\partial^{\ast} E) \without \partial^{\ast}E) = 0.
\end{equation*}
If $ F $ is an elliptic integrand of class $ \mathscr{C}^{3} $ and for every local variation $ h $ it holds that
\begin{equation}\label{var}
\frac{d}{dt}\mathcal{I}_{F}(h_{t}(E))\Big|_{t =0} = 0, 
\end{equation}
then there exists a finite union $ \Omega $ of disjoint open Wulff shapes with equal radii such that 
\begin{equation*}
\Leb{n+1}((\Omega \sim E)\cup(E \sim \Omega)) =0.
\end{equation*}
\end{Corollary}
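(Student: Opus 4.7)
The plan is to reduce the critical point hypothesis on $\mathcal{I}_F$ to the hypotheses of Theorem~\ref{Heintze-Karcher}, and then exploit the rigidity in the equality case. First I would fix an arbitrary local variation $h$ with infinitesimal generator $g = \overset{.}{h}_0 \in \VF(\Real{n+1})$ and set $V = \var{n}(\partial^{\ast}E)$. Since $h_t$ is a diffeomorphism, $\partial^{\ast}(h_t(E)) = h_t(\partial^{\ast}E)$, so $\mathcal{P}_F(h_t(E)) = \Phi_F(h_{t\#}V)$, and Remark~\ref{rem:fst-var} gives $\frac{d}{dt}|_{t=0}\mathcal{P}_F(h_t(E)) = \delta_F V(g)$. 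The divergence theorem (applied to sets of finite perimeter) similarly yields $\frac{d}{dt}|_{t=0}\Leb{n+1}(h_t(E)) = \int_{\partial^{\ast}E} g\bullet \mathbf{n}(E,x)\, \ud\Haus{n}(x)$. Expanding $\frac{d}{dt}|_{t=0}\mathcal{I}_F(h_t(E))$ by the quotient rule and using \eqref{var} for all such $g$, I obtain
\begin{equation*}
    \delta_F V(g) = \lambda \int_{\partial^{\ast}E} g \bullet \mathbf{n}(E,x)\, \ud \Haus{n}(x)
    \qquad \text{where } \lambda = \frac{n\,\mathcal{P}_F(E)}{(n+1)\,\Leb{n+1}(E)} > 0.
\end{equation*}

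From this identity, valid for every compactly supported smooth $g$, I read off that $\|\delta_F V\|$ is absolutely continuous with respect to $\|V\| = \Haus{n}\restrict\partial^{\ast}E$, so $\|\delta_F V\|_{\mathrm{sing}} = 0$, and $\overline{\mathbf{h}}_F(V,x) = -\lambda\,\mathbf{n}(E,x)$ for $\|V\|$-almost every $x$. Consequently $H(x) := -\overline{\mathbf{h}}_F(V,x)\bullet \mathbf{n}(E,x) = \lambda$ is constant; being constant it is trivially of class $\cnt{0,\alpha}$ on any compact subset of the $\cnt{1,\alpha}$~regular part of $\spt\|V\|$, and it satisfies $0 < H \le \lambda$. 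All the standing hypotheses of Theorem~\ref{Heintze-Karcher} are therefore met with $c=\lambda$.

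Applying Theorem~\ref{Heintze-Karcher}, and using $\int F(\nu)\,\ud V^{(x)}(\nu) = F(\mathbf{n}(E,x))$ so that $|\mathbf{h}_F(V,x)| = \lambda / F(\mathbf{n}(E,x))$, I get
\begin{equation*}
    \Leb{n+1}(E) \leq \frac{n}{n+1} \int_{\partial E} \frac{F(\mathbf{n}(E,x))}{\lambda}\, \ud \Haus{n}(x)
    = \frac{n\,\mathcal{P}_F(E)}{(n+1)\lambda} = \Leb{n+1}(E)
\end{equation*}
by the very definition of $\lambda$. Hence equality holds in the Heintze-Karcher inequality, so the rigidity statement of Theorem~\ref{Heintze-Karcher} produces a finite union $\Omega = \bigsqcup_{i=1}^N W_i$ of pairwise disjoint open Wulff shapes $W_i$ of radii $r_i \ge n/\lambda$ agreeing with $E$ up to $\Leb{n+1}$-null sets.

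Finally, to upgrade this to \emph{equal} radii I would use that $\partial^{\ast}E$ and $\partial\Omega = \bigsqcup_i \partial W_i$ coincide $\Haus{n}$-almost everywhere, so the distributional anisotropic mean curvature of $\partial\Omega$ equals the constant $\lambda$. On each $\partial W_i$ Corollary~\ref{cor:wulff-shape-princ-curvatures} gives $\kappa_{\partial W_i,1}^F = \cdots = \kappa_{\partial W_i,n}^F = 1/r_i$, so the anisotropic mean curvature there is $n/r_i$. Equating $n/r_i = \lambda$ for every $i$ forces $r_i = n/\lambda$ uniformly. The main (minor) obstacle I anticipate is the clean justification of the first variation formulas for sets of finite perimeter and the identification of the resulting proportionality constant; once the critical point equation is rephrased as a constant anisotropic mean curvature condition with no singular part, the corollary follows by invoking Theorem~\ref{Heintze-Karcher} essentially as a black box.
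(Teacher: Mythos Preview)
Your proposal is correct and follows essentially the same route as the paper: compute the first variations of perimeter and volume, use the quotient rule on $\mathcal{I}_F$ to extract the constant $\lambda = \frac{n\,\mathcal{P}_F(E)}{(n+1)\,\Leb{n+1}(E)}$, deduce $\|\delta_F V\|_{\mathrm{sing}}=0$ and $\overline{\mathbf{h}}_F(V,\cdot)=-\lambda\,\mathbf{n}(E,\cdot)$, and feed this into Theorem~\ref{Heintze-Karcher} to force equality. Your explicit final step identifying all radii as $n/\lambda$ via Corollary~\ref{cor:wulff-shape-princ-curvatures} is a welcome addition, as the paper leaves this implicit.
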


\begin{proof}
Let $ h $ be a local variation and $ V = \mathbf{v}(\partial^{\ast}E) $. Define $ p(t) = \mathcal{P}_{F}(h_{t}(E)) $ and $ v(t) = \Leb{n+1}(h_{t}(E)) $ for $ -\epsilon < t < \epsilon $. We observe that
\begin{equation*}
a'(0) = \delta_{F}V(\overset{.}{h}_{0}) 
\end{equation*}
\begin{equation*}
	v'(0) = \int_{E} \divergence \overset{.}{h}_{0} \,d\Leb{n+1} = \int_{\partial^{\ast}E} \overset{.}{h}_{0}(x) \bullet \mathbf{n}(E,x)\, \ud\Haus{n}(x)
\end{equation*}
Noting that the derivative in $ t $ the function $ \frac{a^{n+1}}{v^{n}} $ equals
\begin{equation*}
\Big(\frac{p(t)}{v(t)}\Big)^{n}\Big[(n+1)p'(t)  - n \frac{p(t)}{v(t)}v'(t)\Big],
\end{equation*} 
it follows that 
\begin{equation*}
	(n+1)p'(0) - n \frac{p(0)}{v(0)}v'(0) =0
\end{equation*}
and the arbitrariness of $ h $ implies that
\begin{equation*}
\| \delta_{F} V \|_{\sing} =0 \quad \textrm{and} \quad  \overline{\mathbf{h}}_F(V,x) = - \frac{n}{n+1}\frac{\mathcal{P}_{F}(E)}{\Leb{n+1}(E)}\mathbf{n}(E,x).
\end{equation*}
It follows that the hypothesis of Theorem \ref{Heintze-Karcher} and the equality is realized in \eqref{Heintze-Karcher:eq1}. Henceforth, the conclusion follows from Theorem \ref{Heintze-Karcher}.
\end{proof}

\begin{Corollary}\label{main corollary}
Let $ E \subseteq \Real{n+1} $ be a set of finite perimeter and finite volume such that 
  \begin{equation*}
\Haus{n}(\Clos(\partial^{\ast} E) \without \partial^{\ast}E) = 0.
\end{equation*}
If for every local variation $ h $ such that $ \Leb{n+1}(h_{t}(E)) = \Leb{n+1}(E) $ for every $ t \in (-\epsilon, \epsilon) $ it holds that
\begin{equation}\label{var2}
	\frac{d}{dt}\mathcal{P}_{F}(h_{t}(E))\Big|_{t =0} = 0, 
\end{equation}
then there exists a finite union $ \Omega $ of disjoint open Wulff shapes with equal radii such that 
\begin{equation*}
\Leb{n+1}((\Omega \sim E)\cup(E \sim \Omega)) =0.
\end{equation*}
\end{Corollary}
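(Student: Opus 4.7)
The plan is to show that the stronger critical point condition~\eqref{var} of Corollary~\ref{isoperimetric functional} is satisfied, so that the conclusion follows directly from that corollary. Set $V = \var{n}(\partial^{\ast} E)$ and, for any local variation $h$, let $p(t) = \mathcal{P}_{F}(h_{t}(E))$ and $v(t) = \Leb{n+1}(h_{t}(E))$ as in the proof of Corollary~\ref{isoperimetric functional}.

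First I would carry out a standard Lagrange multiplier argument to promote the volume-preserving critical condition~\eqref{var2} to a pointwise identity on $\delta_{F}V$. Fix a reference vectorfield $g_{0} \in \VF(\Real{n+1})$ with $\int_{\partial^{\ast} E} g_{0} \bullet \mathbf{n}(E,\cdot)\, \ud \Haus{n} = 1$; such a~$g_{0}$ exists because $\Leb{n+1}(E) > 0$ ensures, via the divergence theorem, that the linear functional $g \mapsto \int_{\partial^{\ast} E} g \bullet \mathbf{n}(E,\cdot)\, \ud \Haus{n}$ is not identically zero on $\VF(\Real{n+1})$. For an arbitrary $g \in \VF(\Real{n+1})$, consider the two-parameter family $\Psi(t,s,\cdot)$ obtained by composing the flows generated by $g$ and~$g_{0}$, and apply the implicit function theorem to the volume map $(t,s) \mapsto \Leb{n+1}(\Psi(t,s,E))$ (whose $s$-derivative at the origin equals $1$) to produce a smooth $s(t)$ with $s(0) = 0$ and $s'(0) = -\int_{\partial^{\ast} E} g \bullet \mathbf{n}(E,\cdot)\, \ud \Haus{n}$ such that $\tilde h_{t}(x) = \Psi(t, s(t), x)$ defines a~volume-preserving local variation. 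Applying the hypothesis~\eqref{var2} to $\tilde h$ and using the chain rule yields
\[
\delta_{F}V(g) = \lambda \int_{\partial^{\ast} E} g \bullet \mathbf{n}(E,\cdot)\, \ud \Haus{n}
\quad \text{for every $g \in \VF(\Real{n+1})$,}
\]
with $\lambda := \delta_{F}V(g_{0})$ independent of $g$.

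Next I would identify $\lambda$ by testing against a cutoff dilation. Let $\varphi_{R} \in C^{\infty}_{c}(\Real{n+1})$ satisfy $\varphi_{R} \equiv 1$ on $\cball{0}{R}$, $\spt \varphi_{R} \subseteq \cball{0}{2R}$, and $|\grad \varphi_{R}| \le 2/R$, and set $h^{R}_{t}(x) = x + t\, \varphi_{R}(x)\, x$, which is a local variation for small~$|t|$. A direct calculation --- the divergence theorem for $v_{R}'(0)$, and Definition~\ref{def:fst-var-wrt-F} together with the Euler identity $\nu \bullet \grad F(\nu) = F(\nu)$ (whence $\trace B_{F}(\nu) = n F(\nu)$) for $p_{R}'(0)$ --- shows that the error terms supported on $\cball{0}{2R} \without \cball{0}{R}$ vanish as $R \to \infty$, since $\Leb{n+1}(E), \mathcal{P}_{F}(E) < \infty$. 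Hence $v_{R}'(0) \to (n+1)\Leb{n+1}(E)$ and $p_{R}'(0) \to n\mathcal{P}_{F}(E)$. Since $p_{R}'(0) = \lambda\, v_{R}'(0)$ for every $R$ by the previous step, we conclude $\lambda = \tfrac{n}{n+1}\, \mathcal{P}_{F}(E)/\Leb{n+1}(E)$.

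To finish, for any local variation~$h$ the identity of the Lagrange step gives $p'(0) = \lambda\, v'(0)$, so the computation in the proof of Corollary~\ref{isoperimetric functional} yields
\[
\frac{\ud}{\ud t}\mathcal{I}_{F}(h_{t}(E)) \bigg|_{t=0}
= \Big(\frac{p(0)}{v(0)}\Big)^{n} \bigg[(n+1)\lambda - n\, \frac{p(0)}{v(0)} \bigg]\, v'(0)
= 0
\]
by the computed value of $\lambda$. Thus~\eqref{var} holds and Corollary~\ref{isoperimetric functional} delivers the conclusion. I expect the main obstacle to be the Lagrange multiplier step: verifying that $\tilde h$ is an honest local variation in the sense defined just before equation~\eqref{F perimeter} --- smooth in $(t,x)$, a diffeomorphism in $x$ for each $t$, and with compactly supported displacement --- requires bookkeeping about the joint domain of $\Psi$ and~$s$ and about compositions of flows of compactly supported smooth vectorfields. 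The remainder consists of routine scaling computations.
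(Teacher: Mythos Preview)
Your proposal is correct but takes a genuinely different route from the paper's proof. The paper's argument is a one-line rescaling trick: given an arbitrary local variation~$h$, it sets
\[
f_t(x) = \Bigl(\tfrac{\Leb{n+1}(E)}{\Leb{n+1}(h_t(E))}\Bigr)^{1/(n+1)} h_t(x),
\]
observes that $f_t$ is volume-preserving, applies the hypothesis~\eqref{var2} to~$f$, and then uses the $n$-homogeneity of $\mathcal{P}_F$ under dilations to read off~\eqref{var} directly --- no Lagrange multiplier, no identification of~$\lambda$. Your approach instead runs the standard constrained-critical-point machinery (implicit function theorem to build a volume-preserving competitor, then cutoff dilations to pin down the multiplier). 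What your route buys is rigour on exactly the point the paper glosses over: the paper's $f_t$ involves a \emph{global} rescaling, so $\{x : f_t(x) \ne x \text{ for some } t\}$ is not compactly supported and $f$ is not literally a local variation in the sense defined; a cutoff would be needed to make that argument fully honest. Conversely, the paper's trick is far shorter and makes transparent why the isoperimetric quotient, rather than $\mathcal{P}_F$ alone, is the natural functional. Your self-identified obstacle (that $\tilde h$ is a genuine local variation) is benign: compositions of flows of compactly supported $\cnt{\infty}$ vectorfields are smooth diffeomorphisms equal to the identity outside a fixed compact set, and the implicit-function-theorem correction $s(t)$ is smooth, so $\tilde h$ inherits all three properties.
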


\begin{proof}
Thanks to Corollary \ref{isoperimetric functional}, we just need to prove that such a set $E$ satisfies \eqref{var} for every local variation $ h $. To this aim we define the variation 
$$
f_t(x)=\left(\frac{\Leb{n+1}(E)}{\Leb{n+1}(h_t(E))}\right)^\frac{1}{n+1} h_t(x)
$$
and we observe that for every $ t \in (-\epsilon, \epsilon) $ it holds
$$ \Leb{n+1}(f_{t}(E)) = \left(\frac{\Leb{n+1}(E)}{\Leb{n+1}(h_t(E))}\right)^\frac{n+1}{n+1} \Leb{n+1}(h_t(E))=\Leb{n+1}(E).$$
We deduce from \eqref{var2} that
$$
0=\frac{d}{dt}\mathcal{P}_{F}(f_{t}(E))\Big|_{t =0} = \Leb{n+1}(E)^\frac{n}{n+1}\frac{d}{dt}\frac{\mathcal{P}_{F}(h_{t}(E))}{\Leb{n+1}(h_t(E))^\frac{n}{n+1}}\Big|_{t =0},
$$
which implies \eqref{var}, as desired.
\end{proof}

\subsection*{Acknowledgements}
The first author has been supported by the NSF DMS Grant No.~1906451.
The second author was supported by the National Science Centre Poland
grant no.~2016/23/D/ST1/01084.

\bibliographystyle{plain}

\end{document}